\newtheorem{thm}{Theorem}[section] 
\newtheorem{conj}[thm]{Conjecture}
\newtheorem{qus}[thm]{Question}
\newtheorem{cor}[thm]{Corollary}
\newtheorem{lem}[thm]{Lemma}
\newtheorem{prop}[thm]{Proposition}
\newtheorem{defn}[thm]{Definition}
\sloppy  \allowdisplaybreaks[4]
\begin{document}

    \title[Absence of periodic points]{Accessibility and Central Integrability In the Absence of Periodic Points}

	\author{Ziqiang Feng}
    \address{Beijing International Center for Mathematical Research, Peking University, Beijing, 100871, China.}
    \email{zqfeng@pku.edu.cn}

    \author{Ra\'{u}l Ures}
    \address{Department of Mathematics and Shenzhen International Center for Mathematics, Southern University of Science and Technology, Shenzhen, 518000, China.}
    \email{ures@sustech.edu.cn}

    \thanks{This work was partially supported by National Key R\&D Program of China 2022YFA1005801, NSFC 12071202, and NSFC 12161141002.}

	\begin{abstract}
        We consider a partially hyperbolic diffeomorphism $f: M \to M$ without periodic points on a closed manifold $M$. We prove that $f$ is accessible when $M$ is a 3-manifold with non-virtually-solvable fundamental group $\pi_1(M)$. In the case where $\dim E^c = 1$, we demonstrate that the center bundle $E^c$ is uniquely integrable if $f$ lacks accessibility. Furthermore, we provide a complete characterization of accessibility classes for such systems with one-dimensional center bundles.
	\end{abstract}

    \date{November 2, 2025}

	\maketitle


	\vspace{.5cm}

	{\bf Keywords}: Partial hyperbolicity, accessibility, dynamical coherence, foliations.

    {\bf MSC}: 37C25; 37C86; 37D30; 57R30.

\tableofcontents

\section{Introduction}

The primary focus of this paper is the accessibility property in partially hyperbolic systems and its interplay with central integrability. A diffeomorphism $f: M \to M$ on a closed Riemannian manifold $M$ is \emph{partially hyperbolic} if the tangent bundle $TM$ splits into three $Df$-invariant subbundles $E^s \oplus E^c \oplus E^u$, where $E^s$ is uniformly contracting, $E^u$ is uniformly expanding, and $E^c$ is dominated by the other two subbundles. For a precise definition, see Section \ref{PH}. A partially hyperbolic diffeomorphism is \emph{accessible} if any two points in $M$ can be connected by a piecewise smooth path tangent to either $E^s$ or $E^u$.  

The study of partial hyperbolicity originated in the work of \cite{BrinPesin74} on skew products and frame flows, and \cite{PughShub72} on Anosov actions. Accessibility was introduced in \cite{BrinPesin74} as a generalization of Anosov systems, where it termed as transitivity of a pair of foliations, to investigate the topological transitivity of partially hyperbolic dynamics. In 1995, Pugh and Shub proposed a conjecture, as a part of their Stable Ergodicity Conjecture, that accessibility is an open and dense property among partially hyperbolic diffeomorphisms.

\begin{conj}\label{PS}(Pugh-Shub)  
    Accessibility holds for an open and dense subset of partially hyperbolic diffeomorphisms.  
\end{conj}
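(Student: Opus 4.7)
The plan is to decompose the conjecture into \emph{openness} and \emph{density} of the accessibility property and to attack them separately. For openness, fix an accessible $f$ and, for each $(p,q) \in M \times M$, choose a piecewise $su$-path $\gamma_{p,q}$ from $p$ to $q$ with finitely many legs, each contained in a small foliation chart of either $\mathcal{F}^s$ or $\mathcal{F}^u$. Since the strong stable and unstable foliations vary continuously with the diffeomorphism in the $C^1$ topology, each leg admits a nearby $su$-leg for every $g$ in a sufficiently small $C^1$-neighborhood of $f$ and for every pair $(p',q')$ near $(p,q)$. A compactness argument on the product $M \times M$ then yields a uniform $C^1$-neighborhood of $f$ inside which every diffeomorphism remains accessible.

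The hard part is density. Given a non-accessible $f$, I would aim to produce an arbitrarily $C^r$-small perturbation $g$ that is accessible. The starting point is the result of Ni\c{t}ic\u{a}-T\"or\"ok and Burns-Wilkinson which endows every non-open accessibility class $AC(p)$ with the structure of a $C^1$-immersed $f$-invariant lamination $\Lambda$ tangent to $E^s \oplus E^u$. The strategy, following Dolgopyat-Wilkinson, Didier, and Rodriguez Hertz-Rodriguez Hertz-Ures, is an ``engulfing'' argument: pick $p \in \Lambda$ and a closed $su$-loop $\sigma$ based at $p$, then produce a $C^r$-small perturbation $g$ supported in a small ball around a point of $\sigma$ that twists the unstable holonomy along $\sigma$ so that, for $g$, the loop no longer closes up. This strictly enlarges $AC_g(p)$; iterating the procedure together with a Baire category argument gives density of accessibility inside each $C^r$ topology.

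The main obstacle is controlling the effect of a local perturbation on the global structure of accessibility classes: one must guarantee that the perturbation genuinely breaks the chosen loop without creating new laminar structure elsewhere that preserves non-accessibility, and one must align this perturbation with the invariant cone fields so that partial hyperbolicity persists. When $\dim E^c = 1$, this difficulty is controlled by the Rodriguez Hertz-Rodriguez Hertz-Ures dichotomy: any proper $su$-lamination must be saturated by center curves and transverse to $E^c$, which pins down the possible $\Lambda$ and makes the holonomy computation explicit. In the setting of the present paper, where $f$ has no periodic points and $\pi_1(M)$ is non-virtually-solvable, I would combine the one-dimensional characterization of accessibility classes announced in the abstract with topological obstructions coming from the absence of periodic orbits to rule out proper $su$-laminations outright, thereby upgrading density to outright accessibility rather than a mere generic statement. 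In higher center dimensions the lack of an analogous structure theorem for $su$-laminations remains the essential unresolved obstacle, and new ideas beyond holonomy perturbation appear to be required.
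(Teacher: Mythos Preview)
The statement you are attempting to prove is \emph{Conjecture}~\ref{PS}, the Pugh--Shub conjecture. The paper does not prove it; it merely states it as background and motivation, and then surveys the partial results available in the literature (Didier for openness when $\dim E^c=1$; Rodriguez Hertz--Rodriguez Hertz--Ures and Burns--Hertz--Hertz--Talitskaya--Ures for density when $\dim E^c=1$; Dolgopyat--Wilkinson and Avila--Crovisier--Wilkinson for the $C^1$ case in arbitrary center dimension). There is no ``paper's own proof'' to compare against, because the full conjecture remains open.

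Your proposal is not a proof of the conjecture either, and you essentially acknowledge this in the final paragraph: the density argument you sketch requires a structure theorem for non-open accessibility classes that is only available when $\dim E^c=1$ (or $\dim E^c=2$ with additional hypotheses), and you concede that ``in higher center dimensions the lack of an analogous structure theorem for $su$-laminations remains the essential unresolved obstacle.'' That is precisely why the conjecture is still open. The openness half is also not known in general: Didier's argument gives $C^1$-openness only when $\dim E^c=1$, and Avila--Viana extend it to $\dim E^c=2$ under regularity assumptions, but there is no proof of $C^r$-openness for arbitrary center dimension. Your compactness sketch for openness glosses over the real difficulty, which is that the \emph{number of legs} in an $su$-path connecting two points need not be uniformly bounded over $M\times M$ for a given accessible $f$, so the continuation argument does not immediately go through.

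In short: you have been asked to prove an open problem, and what you have written is a reasonable survey of the known strategies together with an honest assessment of where they fall short. That is appropriate context, but it is not a proof, and neither the paper nor anyone else has one.
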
  

Significant progress has been made toward proving this conjecture. Didier \cite{Didier} established the $C^1$-stability of accessibility for partially hyperbolic diffeomorphisms with one-dimensional center bundle. Hertz, Hertz, and Ures \cite{08invent} demonstrated $C^{\infty}$-denseness of accessibility for conservative systems with a one-dimensional center bundle. The work of \cite{BHHTU08} extended this result to the non-conservative setting, thereby resolving Conjecture \ref{PS} for systems with a one-dimensional center bundle.  

For two-dimensional center bundles, Avila and Viana \cite{AvilaViana20_2dcenter} established $C^1$-openness of accessibility among $C^2$ partially hyperbolic diffeomorphisms and $C^r$-denseness for certain fibered systems. J. R. Hertz and Vásquez \cite{HertzVasquez20} proved that accessibility classes form immersed submanifolds for systems with two-dimensional center bundles. Leguil and Pineyrua \cite{LeguilPineyrua21} recently contributed to $C^r$-denseness results for systems with two-dimensional center bundles under strong bunching and plaque expansive conditions. Besides, Dolgopyat--Wilkinson \cite{DW03} and Avila--Crovisier--Wilkinson \cite{ACW22plms} demonstrated that accessibility holds for a $C^1$-open and dense subset of all partially hyperbolic diffeomorphisms, settling the $C^1$ case of the conjecture without restrictions on the center bundle's dimension.  

Accessibility has garnered considerable attention as a key geometric property with numerous critical applications in partially hyperbolic dynamics. As highlighted by the Pugh--Shub Stable Ergodicity Conjecture, accessibility serves as a powerful tool to establish ergodicity---a result proven for systems with one-dimensional center bundles \cite{08invent} and under center-bunched conditions \cite{BW10annals}. Building on Brin's work \cite{Brin75_transitive}, accessibility also provides an effective technique to verify transitivity. Furthermore, accessibility enables the generalization of periodic obstructions to solving the cohomological equation and Livsic regularity problems for partially hyperbolic diffeomorphisms \cite{Wilkinson_cohomological}. Additionally, it plays a significant role in analyzing the properties of special ergodic measures, as detailed in \cite{HHTU12mme, VY13, AVW-flow, CP}. We refer the reader to \cite{Wilkinson10-survey, 2018survey} for surveys and related discussion.

As formulated by Conjecture \ref{PS} and results from \cite{08invent, BHHTU08}, accessibility is a prevalent property among partially hyperbolic diffeomorphisms in dimension three, regardless of volume preservation. This prevalence raises the fundamental question of determining whether a given partially hyperbolic diffeomorphism is accessible. The central challenge involves characterizing the meager subset of non-accessible systems within this class.

Substantial efforts have been dedicated to verifying the accessibility property across canonical 3-manifold structures. Hertz, Hertz, and Ures \cite{2008nil} identified the first class of manifolds where all conservative partially hyperbolic systems are necessarily accessible. Building on the contributions of \cite{HamU14CCM}, Gan and Shi \cite{GS20DA} established a necessary and sufficient condition for accessibility on $\mathbb{T}^3$ for conservative partially hyperbolic diffeomorphisms homotopic to Anosov automorphisms. Subsequently, Hammerlindl and Shi \cite{HS21DA} extended these results to non-conservative systems, providing a characterization of accessibility classes in closed 3-manifolds with solvable fundamental groups.  

Recent work has shifted focus to manifolds with non-solvable fundamental groups. Under the non-wandering assumption, Hammerlindl--Hertz--Ures \cite{2020Seifert} and Fenley--Potrie \cite{FP_hyperbolic} demonstrated accessibility for Seifert manifolds in certain isotopy classes; Fenley and Potrie \cite{FP_hyperbolic} further established accessibility in hyperbolic 3-manifolds.  

An alternative approach to address accessibility involves adopting a dynamical perspective rather than focusing solely on manifold topology. A prominent class of partially hyperbolic systems consists of time-one maps of Anosov flows. For manifolds with non-solvable fundamental groups, Fenley and Potrie \cite{FP_hyperbolic} established the accessibility of discretized Anosov flows. Subsequently, they \cite{FP21accessible} considered a broader class of partially hyperbolic diffeomorphisms—encompassing all discretized Anosov flows and anomalous examples constructed in \cite{BPP1,BGP2,BGHP3}, termed collapsed Anosov flows \cite{BFP23collapsed}—and proved accessibility when the manifold's fundamental group is not virtually solvable and the non-wandering set equals the entire manifold.  

The problem of determining accessible partially hyperbolic systems is also explored in our prior work. Under non-wandering condition and assuming a non-solvable fundamental group, we demonstrated accessibility for partially hyperbolic diffeomorphisms without periodic points \cite{FU1}, systems with quasi-isometric centers \cite{Feng24}, and those lying in the homotopy class of the identity \cite{FU-id}. Crucially, these results involve no manifold-specific constraints beyond the non-solvable fundamental group requirement.  

As suggested in \cite{DW03,BHHTU08,HS21DA,FP_hyperbolic}, accessibility may be provable without assuming non-wandering behavior or conservativity. This paper generalizes the result from \cite{FU1} to systems where the non-wandering set need not coincide with the manifold.  

\begin{thm}\label{accessible}  
    Let $f: M \to M$ be a partially hyperbolic diffeomorphism of a closed 3-manifold $M$ without periodic points. If $\pi_1(M)$ is not virtually solvable, then $f$ is accessible.  
\end{thm}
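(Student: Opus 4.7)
The plan is to argue by contradiction and reduce to the non-wandering setting handled in \cite{FU1}. Suppose $f$ is not accessible. Since $\dim M = 3$ forces $\dim E^c = 1$, the theory of accessibility classes for partially hyperbolic diffeomorphisms with one-dimensional center \cite{Didier, 08invent, BHHTU08} provides a nonempty, compact, $f$-invariant lamination $\mathcal{L}$ of $M$ whose leaves are $C^1$ surfaces tangent to $E^s \oplus E^u$, corresponding precisely to the set of points whose accessibility class fails to be open. The whole task is to derive a contradiction from the coexistence of $\mathcal{L}$, the absence of periodic points, and the non-virtual solvability of $\pi_1(M)$, without assuming $\Omega(f) = M$.

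The key new step, absent in \cite{FU1}, is to extract a recurrent sub-lamination. Applying Zorn's lemma to the compact $f$-invariant subsets of $\mathcal{L}$ produces a minimal such subset $K$; by minimality every orbit in $K$ is dense in $K$, and hence $K \subseteq \Omega(f)$. Saturating $K$ by leaves of $\mathcal{L}$ yields an $f$-invariant sub-lamination $\mathcal{M} \subseteq \mathcal{L}$ that is fully accumulated by $\Omega(f)$, so on $\mathcal{M}$ the recurrence-based tools of \cite{FU1} become available.

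Next I would case-split on whether $\mathcal{M}$ contains a compact leaf. If some leaf $T$ of $\mathcal{M}$ is compact, then $T$ is a torus tangent to $E^s \oplus E^u$; after passing to an iterate $f^n$ fixing $T$ (and to a finite cover of $M$ to handle orientation or a possible Klein bottle), the restriction $f^n|_T$ is an Anosov diffeomorphism of $T^2$, which has infinitely many periodic points---contradicting the hypothesis on $f$. Therefore every leaf of $\mathcal{M}$ is a noncompact, $\pi_1$-injective surface in $M$. With $\mathcal{M}$ recurrent and composed only of noncompact leaves, I would then run the arguments of \cite{FU1} on $\mathcal{M}$ in place of all of $M$: these combine the topological rigidity of $\pi_1$-injective surfaces in closed 3-manifolds with non-virtually-solvable fundamental group with the recurrence of $f$, to force either a periodic leaf (just ruled out) or a periodic point (ruled out by hypothesis).

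The main technical obstacle I foresee is this localization step. The machinery of \cite{FU1} leans heavily on globally defined invariant structures---Burago--Ivanov branching foliations, Fenley--Potrie leaf spaces, and the large-scale geometry of the universal cover---constructed from the dynamics on all of $M$. Verifying that the relevant rigidity survives when only the sub-lamination $\mathcal{M}$ is available, and that the wandering complement $M \setminus \Omega(f)$ contributes no additional leaves or obstructions, is where the bulk of the new work should lie. An alternative to keep in reserve is to extend $\mathcal{M}$ to a global $f$-invariant branching foliation using the collapsed-Anosov-flow framework of \cite{BFP23collapsed, FP21accessible}, reducing the wandering situation directly to that setting.
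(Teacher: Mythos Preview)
Your plan diverges substantially from the paper's, and the divergence is where the difficulties lie.

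\textbf{Compact-leaf case.} Your argument assumes you can pass to an iterate $f^n$ fixing the torus $T$, but a compact $su$-leaf need not be periodic; in fact, under the no-periodic-points hypothesis it \emph{cannot} be (finitely many compact leaves would force one to be periodic, hence carry an Anosov map with periodic points). The paper handles this case purely topologically: the mere existence of an embedded $su$-torus already forces $M$ onto the short list of Theorem~\ref{maptori}, all of which have virtually solvable $\pi_1$. No dynamical argument on $T$ is needed or possible.

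\textbf{Non-compact case.} Your strategy---extract an $f$-minimal compact set $K\subset\Omega(f)$, saturate by leaves, and ``run \cite{FU1} on the resulting sub-lamination''---is not how the paper proceeds, and it is not clear it can be made to work. Two concrete problems: (i) an $f$-minimal set and a \emph{lamination}-minimal set (every leaf dense) are different objects, and the leaf-saturation of your $K$ is not obviously closed, minimal as a lamination, or contained in $\Omega(f)$; (ii) the machinery of \cite{FU1} needs globally defined invariant center-stable and center-unstable foliations with completeness, and leaf-space arguments in $\widetilde M$---none of which follows from having a recurrent sub-lamination.

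The paper does not try to reduce to the non-wandering setting at all. Instead it proves, directly from the absence of periodic points and without any recurrence hypothesis, three global structural results: Theorem~\ref{integrability} (unique integrability of $E^c$, hence existence of genuine $\mathcal{F}^{cs}$, $\mathcal{F}^{cu}$), Proposition~\ref{integ-gut} (every complementary region of an invariant sub-lamination of $\Gamma(f)$ without compact leaves is an $I$-bundle fibered by center arcs, so $\Gamma(f)$ extends to a foliation and has a \emph{unique} lamination-minimal set $\Lambda^{su}$), and Proposition~\ref{complete} (completeness of $\mathcal{F}^{cs}$ and $\mathcal{F}^{cu}$). These are the substitutes for the non-wandering hypothesis; once they are in hand, Proposition~\ref{1point} and the Gromov-hyperbolic leaf geometry of $\Lambda^{su}$ feed directly into the quasi-geodesic-fan analysis of \cite{FU1}, producing a closed stable leaf and the desired contradiction. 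The bulk of the paper (Sections~\ref{section3} and~\ref{section4}) is devoted to these intermediate theorems, and your outline does not anticipate any of them.
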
  

The lack of a non-wandering assumption represents a significant advancement in accessibility research. Specifically, the class of diffeomorphisms studied here is broader than non-wandering or conservative systems. Arbitrarily near any product of an Anosov diffeomorphism and a circle rotation, there exists a $C^1$-open set of accessible yet non-transitive partially hyperbolic diffeomorphisms \cite{NT01}, which therefore have wandering points due to Brin's result \cite{Brin75_transitive}. Additionally, accessible discretized Anosov flows with proper attractors \cite{BonattiGuelman} and anomalous partially hyperbolic diffeomorphisms with chain-recurrent sets distinct from the manifold \cite{BPP1} provide further examples of systems with wandering points. Notably, we can modify the latter examples to eliminate periodic points, thereby distinguishing our result from the non-wandering case in \cite{FU1}.  

Combining Theorem \ref{accessible} with results from \cite{Ham17CMH,HS21DA}, we obtain an exclusive dichotomy on accessibility classes for all closed 3-manifolds in the absence of periodic points.  

\begin{cor}\label{accessible=nosutorus}  
    Let $f: M \to M$ be a $C^1$ partially hyperbolic diffeomorphism of a closed 3-manifold without periodic points. Then $f$ is accessible if and only if no $su$-tori exist. Furthermore, if $f$ is $C^2$ and possesses an $su$-torus, then $f$ admits an $su$-foliation by tori.  
\end{cor}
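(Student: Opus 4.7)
The plan is to deduce Corollary \ref{accessible=nosutorus} by combining Theorem \ref{accessible} with the classification of accessibility classes in the virtually solvable setting provided by Hammerlindl \cite{Ham17CMH} and Hammerlindl--Shi \cite{HS21DA}. First I would dispatch the easy direction: if $T$ is an $su$-torus, i.e.\ a compact embedded $2$-torus tangent to $E^s\oplus E^u$, then $T$ is saturated by leaves of both $\mathcal{F}^s$ and $\mathcal{F}^u$ and is hence a union of accessibility classes. As a proper closed subset of $M$ it obstructs accessibility, and this argument is insensitive to both the periodic point hypothesis and the topology of $M$.

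For the converse I would split according to whether $\pi_1(M)$ is virtually solvable. If not, Theorem \ref{accessible} applies directly and $f$ is accessible, so no $su$-torus can exist. If $\pi_1(M)$ is virtually solvable, then $M$ is finitely covered by a torus, a nilmanifold, or a solvmanifold; the characterization of accessibility classes carried out in \cite{HS21DA} (which holds in the non-conservative setting and makes no use of the absence of periodic points) states that accessibility is equivalent to the absence of $su$-tori. Concatenating the two cases yields the claimed dichotomy.

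For the ``furthermore'' part, Theorem \ref{accessible} forces $\pi_1(M)$ to be virtually solvable the moment an $su$-torus exists, so we are automatically in the solvable regime. Under the $C^2$ regularity hypothesis I would then invoke Hammerlindl's structural results \cite{Ham17CMH}, refined by \cite{HS21DA}, which show that in this setting a single $su$-torus propagates to a full $f$-invariant foliation of $M$ by $su$-tori. As is typical of a corollary of this type, the main substance lies in Theorem \ref{accessible}; the only genuine verification required here is that the classification statements in \cite{Ham17CMH, HS21DA} are formulated in sufficient generality to apply to our setting, and in particular do not implicitly rely on conservativity, transitivity, or a non-wandering hypothesis --- which they do not.
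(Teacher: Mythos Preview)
Your treatment of the converse direction has a genuine gap. The results in \cite{Ham17CMH,HS21DA} do \emph{not} assert that accessibility is equivalent to the absence of $su$-tori on manifolds with virtually solvable fundamental group: the linear Anosov automorphism on $\mathbb{T}^3$ with a dominated three-way splitting is non-accessible yet admits no $su$-torus. What those references (together with \cite{2008nil}) actually yield is a dichotomy: a non-accessible partially hyperbolic diffeomorphism on such a 3-manifold either admits an $su$-torus or lies in the homotopy class of an Anosov automorphism on $\mathbb{T}^3$. The paper eliminates the second alternative precisely by invoking the absence of periodic points (any map in the Anosov homotopy class has periodic points), a step your outline omits entirely while explicitly claiming the cited characterization ``makes no use of the absence of periodic points.''

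The ``furthermore'' clause is likewise not a direct citation, and here again the no-periodic-points hypothesis does real work. The paper's argument is as follows: once an $su$-torus exists, Theorem~\ref{integrability} gives dynamical coherence, and Theorem~\ref{classification_sol} supplies an $f$-periodic compact center circle $\gamma$; the $C^2$ hypothesis (via $2$-normal hyperbolicity) makes $\gamma$ a $C^2$ circle. Since $f|_\gamma$ has no periodic points, Denjoy's theorem forces $f|_\gamma$ to be minimal, so the closed $f|_\gamma$-invariant set $\Lambda\cap\gamma$ (where $\Lambda$ is the compact lamination of all $su$-tori) must equal $\gamma$, and $\Lambda$ foliates $M$. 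Both the absence of periodic points and the $C^2$ regularity are essential here; as the paper observes, in the $C^1$ category Denjoy-type constructions show that $\Lambda$ can be a genuinely proper lamination. Your proposal treats this as a black-box consequence of \cite{Ham17CMH,HS21DA}, which it is not.
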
  

We emphasize that the class of non-accessible partially hyperbolic diffeomorphisms remains non-empty even in the absence of periodic points. Such diffeomorphisms admit 2-tori tangent to $E^s \oplus E^u$. Due to the lack of periodic points, these tori are abundant, and none are periodic under the diffeomorphism. A canonical example is the product of an Anosov diffeomorphism on $\mathbb{T}^2$ and an irrational circle rotation, which yields a non-accessible partially hyperbolic system without periodic points. When $f$ is only $C^1$, the collection of $su$-tori may form a proper lamination rather than a foliation. Examples of this phenomenon can be constructed by inserting open accessibility classes within the complement of $su$-tori, analogous to Denjoy’s circle example.  

The existence of stable and unstable foliations is fundamental to Anosov theory. Foundational results in \cite{BrinPesin74, HPS77} established the unique integrability of the strong bundles $E^s$ and $E^u$ for partially hyperbolic systems (see Theorem \ref{HPS}). However, the center bundle $E^c$ is not always integrable, and invariant foliations tangent to $E^c \oplus E^s$ or $E^c \oplus E^u$ may fail to exist. A partially hyperbolic diffeomorphism is \emph{dynamically coherent} if both $E^c \oplus E^s$ and $E^c \oplus E^u$ admit tangent invariant foliations. In such cases, the intersection of these foliations yields a center foliation tangent to $E^c$.  

A canonical example of dynamical incoherence by Borel, originally cited by Smale \cite{Smale67} and later identified by Wilkinson \cite{Wilkinson98}, is a non-toral Anosov automorphism on a six-dimensional nilmanifold; see also \cite{BurnsWilkinson08}. It was previously believed that integrability of the center bundle would always hold for systems with one-dimensional center bundles, as the Frobenius condition is automatically satisfied. This belief persisted until Hertz, Hertz, and Ures \cite{2016example} constructed a non-integrable center bundle example on $\mathbb{T}^3$—the first instance of central non-integrability. This result revealed another obstruction to center bundle unique integrability: the lack of differentiability in the one-dimensional center bundle. While Peano's theorem guarantees the existence of curves tangent to a non-smooth one-dimensional center bundle, these curves may not be assembled as a foliation. Notably, the Hertz--Hertz--Ures example is pointwise partially hyperbolic but not absolutely partially hyperbolic, as demonstrated by Brin--Burago--Ivanov \cite{BBI09}. Absolute partial hyperbolicity, a stricter condition requiring the uniform domination constants independent of the base point, was later shown by Bonatti--Gogolev--Hammerlindl--Potrie \cite{BGHP3} to coexist with robust center non-integrability.  

The central role of center bundle integrability in partially hyperbolic dynamics motivates a longstanding open question:  

\begin{qus}  
    Under what conditions are partially hyperbolic systems dynamically coherent?  
\end{qus}  

In this paper, we establish dynamical coherence for non-accessible partially hyperbolic diffeomorphisms with one-dimensional center bundles, no periodic points, and acting on closed manifolds of dimension $n \geq 3$.  

\begin{thm}\label{integrability}  
    Let $f: M \to M$ be a partially hyperbolic diffeomorphism on a closed manifold with $\dim E^c = 1$. If $f$ has no periodic points and is not accessible, then $E^c$ is uniquely integrable. Consequently, $f$ is dynamically coherent.  
\end{thm}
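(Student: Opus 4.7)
The strategy is to extract geometric information from non-accessibility and combine it with the no-periodic-points hypothesis to forbid the known mechanism of center non-integrability, namely recurrent branching of center curves as realized in the example of \cite{2016example}. Since $\dim E^c = 1$, the structural results of \cite{Didier,08invent,BHHTU08} on accessibility classes guarantee that the union $\Lambda$ of non-open accessibility classes is a closed $f$-invariant set whose connected components are $C^1$ injectively immersed submanifolds tangent to $E^s \oplus E^u$. Non-accessibility of $f$ gives $\Lambda \neq \emptyset$, and the no-periodic-points hypothesis forbids $\Lambda$ from containing periodic leaves (any compact invariant leaf tangent to $E^s \oplus E^u$ would carry a periodic point by standard Lefschetz-type arguments on an iterate).

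To establish unique integrability, I would argue by contradiction: suppose two distinct arcs $\gamma_1,\gamma_2$ tangent to $E^c$ emanate from some $x \in M$ and separate. Since $Df(E^c) = E^c$, the pair $f^n(\gamma_1), f^n(\gamma_2)$ remains a separating pair at $f^n(x)$. By compactness of $M$ and Birkhoff's recurrence theorem applied to a minimal set in $\overline{\{f^n(x)\}}$, one can extract $n_k \to \infty$ with $f^{n_k}(x) \to x^\ast$ where $x^\ast$ is uniformly recurrent; $Df$-equivariance propagates the branching to $x^\ast$. For $x \in \Lambda$, where $E^c(x)$ is transverse to the leaf $L \ni x$ of $\Lambda$, the branching of $\gamma_1,\gamma_2$ descends to a non-trivial $su$-holonomy of some $f^m$ returning near $x^\ast$ along $L$; a graph-transform or closing-style argument, balancing the contraction of $E^s$, the expansion of $E^u$, and the dominated behavior on $E^c$, should upgrade this near-return to a genuine periodic orbit of $f^m$, contradicting the hypothesis. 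The case $x \notin \Lambda$ reduces to the previous one by following $\gamma_1, \gamma_2$ forward until they first meet $\Lambda$; if they never do, the entire branching lies in the open accessibility class of $x$, whose $C^1$ submanifold structure immediately forces $\gamma_1 = \gamma_2$.

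Once unique integrability of $E^c$ is in hand, dynamical coherence follows by a standard construction: the saturation of center leaves by strong stable (respectively strong unstable) leaves assembles into an $f$-invariant center-stable (respectively center-unstable) foliation. The main obstacle is the closing step in the branching argument without the non-wandering hypothesis used in \cite{FU1}. Recurrence of $x$ is not automatic, so one must first propagate the branching to a recurrent point $x^\ast$ through the minimal-set argument above; promoting near-returns of a branching configuration at $x^\ast$ to a genuine periodic orbit then requires a delicate interplay of the exponential rates on $E^s$ and $E^u$ against the domination bounds on $E^c$ along arcs of controlled length. This is the technical heart of the argument, and I expect it to absorb most of the proof.
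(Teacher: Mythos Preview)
Your outline has the right starting point---the lamination $\Gamma(f)$ of non-open accessibility classes and the Anosov closing lemma on $su$-plaques are indeed the engine---but two of your reductions do not work as stated.

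First, the case $x \notin \Gamma(f)$. You write that if the branching curves never meet $\Gamma(f)$, ``the entire branching lies in the open accessibility class of $x$, whose $C^1$ submanifold structure immediately forces $\gamma_1 = \gamma_2$.'' But an open accessibility class is an \emph{open subset of $M$}, not a lower-dimensional submanifold; there is no transversality obstruction to center branching inside it. The paper spends a substantial portion of the argument precisely on this region: it decomposes each closed complementary region of $\Gamma(f)$ into a compact \emph{gut} and non-compact \emph{interstitial} $I$-bundles that can be made arbitrarily thin, shows (via the closing lemma and the fact that gut boundaries meet only finitely many $su$-plaques) that any orbit eventually and permanently inhabits interstitial regions, and then observes that center arcs in a thin interstitial region have length at most $\delta/2$ while branching forces their iterates to have length exceeding $\delta$ (Lemma~\ref{lem2}). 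This gut/interstitial analysis is not optional and is absent from your sketch.

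Second, the step ``$Df$-equivariance propagates the branching to $x^\ast$'' is not justified: non-unique integrability at $x$ does \emph{not} pass to accumulation points of the orbit. The paper never asserts branching at the limit point. Instead it uses only that branching at $x$ forces $f^n(\gamma)$ to be long for all large $n$ (Lemma~\ref{lem2}), then takes three returns $f^{n_1}(x), f^{n_2}(x), f^{n_3}(x)$ near $\omega(x)$ whose $su$-plaques are ordered $z_1 < z_3 < z_2$ along a local center transversal; the long center arcs at $f^{n_1}(x)$ and $f^{n_2}(x)$, pushed forward to the scale of $f^{n_3}(x)$, force some $f^{n_i - n_3}$ to act as a contraction on an interval in the leaf space of $\Gamma(f)$, producing a fixed $su$-plaque and hence a periodic point by closing. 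Your proposed ``graph-transform or closing-style argument balancing rates on $E^s, E^u, E^c$'' is aiming at the same contradiction, but without the ordering-of-plaques mechanism it is not clear how you would actually close up.
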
  

We provide a complete depiction of accessibility classes for partially hyperbolic diffeomorphisms on closed manifolds of dimension $n \geq 3$. Let $\Gamma(f)$ denote the set of non-open accessibility classes (see Section~\ref{PH} for details).  

\begin{thm}\label{codim1}  
    Let $f: M \to M$ be a partially hyperbolic diffeomorphism on a closed manifold with $\dim E^c = 1$ and no periodic points. Then exactly one of the following holds:  
    \begin{enumerate}  
        \item $f$ is accessible;  
        \item $\Gamma(f)$ contains infinitely many compact accessibility classes;  
        \item $\Gamma(f)$ forms an invariant minimal $su$-foliation with non-compact leaves;  
        \item $\Gamma(f)$ contains a unique invariant minimal sub-lamination (possibly equal to $\Gamma(f)$) that extends trivially to a (not necessarily invariant) foliation by non-compact leaves.  
    \end{enumerate}  
    Furthermore, if $NW(f) = M$, the fourth case cannot occur.  
\end{thm}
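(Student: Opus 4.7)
My plan is to assume case (1) fails and derive one of (2)--(4). Non-accessibility together with Theorem \ref{integrability} yields dynamical coherence with a one-dimensional center foliation $\mathcal{W}^c$. By the standard theory of accessibility classes (developed in \cite{08invent, BHHTU08}), $\Gamma(f)$ is then closed, $f$-invariant, and forms a codimension-one $su$-lamination: each leaf is an immersed $(n-1)$-submanifold tangent to $E^s\oplus E^u$ and transverse to $\mathcal{W}^c$.

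The first dichotomy concerns whether some leaf $L\subset\Gamma(f)$ is compact. If so, then since $TL = E^s\oplus E^u$, any iterate $f^k$ with $f^k(L)=L$ restricts to an Anosov diffeomorphism of the closed manifold $L$, which has dense periodic points and contradicts the hypothesis. Hence the $f$-orbit of $L$ must be infinite, yielding infinitely many compact leaves of $\Gamma(f)$; this is case (2). If no leaf is compact, Zorn's lemma supplies a minimal closed $f$-invariant sub-lamination $\Lambda\subseteq\Gamma(f)$. The central step is to prove that $\Lambda$ is unique: if two disjoint minimal sub-laminations $\Lambda_1,\Lambda_2$ existed, I would study the monotone action of $f$ along the center leaves that cross the gap between them, use minimality to exhibit attractor-repeller behavior, and from this structure extract a periodic center leaf and then (by an index argument on it) a periodic point of $f$, a contradiction.

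Given uniqueness, the remaining split is as follows. If $\Gamma(f)=M$, then $\Gamma(f)$ is a foliation which, by the uniqueness just established, equals $\Lambda$ and is therefore a minimal $f$-invariant $su$-foliation with non-compact leaves---case (3). If $\Gamma(f)\subsetneq M$, the absence of periodic points forces trivial holonomy along leaves of $\Lambda$, so the local picture near $\Lambda$ is $L\times C$ for $C\subset[0,1]$ closed, and inserting parallel leaves $L\times\{t\}$ in the gaps produces the claimed (possibly non-invariant) foliation extension---case (4). Finally, to exclude case (4) under $NW(f)=M$: in that setting the open complement $M\setminus\Gamma(f)$ contains an open accessibility class $U$, and the uniqueness of $\Lambda$ together with the lack of periodic accessibility classes forces the $f$-iterates of $U$ to drift monotonically along $\mathcal{W}^c$ toward $\Lambda$, so every point of $U$ is wandering, contradicting $NW(f)=M$. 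The main obstacle is the uniqueness of the minimal sub-lamination: the no-periodic-points hypothesis enters precisely through the prevention of attractor-repeller configurations along the center direction, and executing this in arbitrary dimension requires leveraging Theorem \ref{integrability} to control the center foliation globally, extending the 3-dimensional leaf-space arguments of \cite{HS21DA, FU1}.
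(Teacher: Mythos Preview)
Your outline matches the paper's (compact leaf $\Rightarrow$ case (2); otherwise find a minimal sub-lamination $\Lambda$ and split on whether $\Lambda=M$), but the mechanism you propose for the two hard steps is not the one that actually works, and in places it is not clearly executable. The paper's engine is the structural result (Proposition~\ref{integ-gut}) that for \emph{any} compact $f$-invariant sub-lamination $\Lambda\subset\Gamma(f)$ without compact leaves, each closed complementary region is an $I$-bundle over a boundary leaf with center-segment fibers, and in particular has exactly two boundary leaves (Corollary~\ref{two-boundary}). This is obtained from the gut/interstitial decomposition, the finiteness of $su$-plaques in gut boundaries (Lemma~\ref{suplaque}), and the Anosov closing lemma, none of which you invoke. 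Uniqueness of $\Lambda$ (Proposition~\ref{minimal}) then follows by applying the $I$-bundle result to $\Lambda\cup\Lambda'$ and finding a complementary region with three boundary leaves, a contradiction. Your proposed route---attractor/repeller behavior along center leaves forcing a periodic center leaf and then a periodic point ``by an index argument''---does not obviously produce a periodic center leaf (center arcs between $\Lambda_1$ and $\Lambda_2$ would drift, not close up), and even a periodic center circle only gives a circle diffeomorphism, which need not have fixed points. Likewise, the extension of $\Lambda$ to a foliation in case~(4) is not a matter of ``trivial holonomy'' but is exactly the $I$-bundle statement: one fills the complementary regions by parallel copies along the center fibers.

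Two further issues. First, your case split is on whether $\Gamma(f)=M$, but the correct split is on whether $\Lambda=M$: when $\Gamma(f)=M$ is a foliation it can still fail to be minimal, and then we are in case~(4), not~(3). Second, for the $NW(f)=M$ exclusion the paper does not argue via wandering open accessibility classes; instead it observes that boundary leaves of a proper $\Lambda$ are $f$-periodic (finitely many guts) and then invokes \cite[Proposition~A.5]{08invent}, which says periodic leaves of $\Gamma(f)$ carry dense periodic points. Your wandering argument would still need to rule out \emph{periodic} open accessibility classes, and a periodic open set does not directly yield a periodic point without something like the cited proposition.
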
  

This result generalizes the three-dimensional case in \cite{2008nil} to higher dimensions using the lack of periodic points rather than the non-wandering assumption, where compact accessibility classes in the three-dimensional setting are incompressible tori.

Classical examples of three-dimensional partially hyperbolic diffeomorphisms include systems derived from Anosov dynamics, skew products over Anosov automorphisms of $\mathbb{T}^2$, and time-varying maps of Anosov flows. The classification problem for such systems originates from an informal conjecture by Pujals, later formalized by Bonatti and Wilkinson \cite{BW05}, which posits that these three classes constitute the foundational building blocks for all transitive partially hyperbolic diffeomorphisms on $3$-manifolds. Successful classifications have been achieved for specific classes of $3$-manifolds \cite{HP14,HP15,BFFP1,BFFP2}, under rigid geometric constraints \cite{CPH21}, and under certain dynamical restrictions \cite{BZ20,Feng24,EMP24}. However, the discovery of anomalous examples \cite{BPP1,BGP2,BGHP3} has introduced significant complexity to this classification program.  

Recent advances \cite{BFP23collapsed,FP23_transverse,FP-gafa} address these challenges by expanding the classification framework through the lens of \emph{collapsed Anosov flows}, a concept unifying classical and anomalous examples. This approach provides new insights into the structural taxonomy of partially hyperbolic systems. For comprehensive overviews of this evolving field, we direct readers to the surveys \cite{2018survey,HP18_survey,Potrie18ICM}.  

Notably, most existing classification results for 3-dimensional systems require either the non-wandering property (or transitivity) or dynamical coherence. In contrast, we provide a classification free from both non-wandering and dynamical coherence assumptions, fully characterizing non-accessible partially hyperbolic diffeomorphisms without periodic points.  

\begin{thm}\label{classification}  
    Let $f: M \to M$ be a partially hyperbolic diffeomorphism of a closed 3-manifold without periodic points. Then either:  
    \begin{itemize}  
        \item $f$ is accessible; or  
        \item after a finite lift and iteration, $f$ belongs to one of the following classes:  
        \begin{itemize}  
            \item Leaf conjugacy to a skew product over a linear Anosov automorphism of $\mathbb{T}^2$;  
            \item Discretized suspension Anosov flow.  
        \end{itemize}  
    \end{itemize}  
\end{thm}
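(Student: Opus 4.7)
The plan is to prove the dichotomy by first reducing to the non-accessible case via Theorem~\ref{accessible}, then using the structural results already established to pin down both the topology of $M$ and the dynamics of $f$. Assume $f$ is not accessible. Theorem~\ref{accessible} then forces $\pi_1(M)$ to be virtually solvable, and Theorem~\ref{integrability} gives that $f$ is dynamically coherent with an $f$-invariant center foliation $\mathcal{F}^c$ tangent to $E^c$. Furthermore, Corollary~\ref{accessible=nosutorus} (in the $C^2$ case) or the fourth alternative of Theorem~\ref{codim1} (in the $C^1$ case, upon trivially extending the invariant minimal sub-lamination) produces a codimension-one (possibly non-invariant) foliation of $M$ by compact $su$-tori.

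Because $f$ has no periodic points, no $su$-torus can be periodic: any iterate of $f$ fixing an $su$-torus would restrict to it as an Anosov diffeomorphism of $T^2$, which has dense periodic points. Hence the map that $f$ induces on the one-dimensional leaf space of the $su$-foliation is a homeomorphism with no periodic orbit. By Epstein's theorem on codimension-one foliations whose leaves are all compact, combined with a finite cover to orient the foliation and replace Klein-bottle leaves by tori, $M$ becomes the mapping torus $M_A$ of some $A\in\operatorname{Diff}(T^2)$. Virtual solvability of $\pi_1(M)$ and the classification of $T^2$-bundles over $S^1$ then reduce the situation, up to a further finite lift, to one of three model manifolds: the torus $\mathbb{T}^3$, a Heisenberg nilmanifold, or a sol-manifold, according as $A$ is isotopic to the identity, a parabolic Dehn twist, or a hyperbolic toral automorphism.

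On $\mathbb{T}^3$ or the nilmanifold, after iteration and a suitable finite cover $f$ preserves the $T^2$-fibration and descends to a circle homeomorphism with no periodic orbits, hence semi-conjugate to an irrational rotation. The restriction of $f$ to any $su$-torus is isotopic to a fixed linear Anosov automorphism of $T^2$, and the leaf-conjugacy machinery for partially hyperbolic systems on these manifolds (see \cite{HP14,HS21DA}) produces a leaf conjugacy to a genuine skew product over that linear Anosov. On the sol-manifold the monodromy $A$ is already an Anosov automorphism of $T^2$, and the classification of partially hyperbolic diffeomorphisms on sol-manifolds \cite{HP15,HS21DA}, combined with dynamical coherence and the absence of periodic points, identifies $f$ (after iteration and a finite lift) with a discretized time-$t$ map of the suspension Anosov flow of $A$.

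The principal obstacle is the $C^1$ regularity: Corollary~\ref{accessible=nosutorus} only yields a genuine $su$-foliation when $f$ is $C^2$, so in general the $su$-picture is a lamination with gaps formed by open accessibility classes. One must invoke the trivial extension of Theorem~\ref{codim1}(4) to recover a topological $T^2$-foliation of $M$, and then verify that the dynamics on the gap regions — while not necessarily preserving the extended foliation — does not obstruct either the mapping-torus decomposition or the identification of the induced circle dynamics. A secondary subtlety is to ensure that after finite lift and iteration the monodromy of the $T^2$-bundle upgrades to a genuine linear Anosov (respectively identity-isotopic) representative, so that the cited classification results on skew products and on sol-manifolds apply directly.
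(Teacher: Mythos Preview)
Your proposal takes an unnecessarily circuitous route and contains a concrete error. The paper's proof is much shorter: once non-accessibility gives dynamical coherence (Theorem~\ref{integrability}) and virtually solvable $\pi_1(M)$ (Theorem~\ref{accessible}), one applies Theorem~\ref{classification_sol} directly---this is the Hammerlindl--Potrie classification of dynamically coherent partially hyperbolic diffeomorphisms on 3-manifolds with virtually solvable fundamental group, already stated in the preliminaries. That theorem yields, up to finite lift and iterate, leaf conjugacy to one of: a linear Anosov on $\mathbb{T}^3$, a skew product over a linear Anosov of $\mathbb{T}^2$, or the time-one map of a suspension Anosov flow. The first alternative is eliminated in one line: a diffeomorphism homotopic to a linear Anosov on $\mathbb{T}^3$ must have periodic points. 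There is no need to produce an $su$-foliation by tori, decompose $M$ as a mapping torus, or analyze induced circle dynamics; the references \cite{HP14,HP15} you eventually invoke are precisely what Theorem~\ref{classification_sol} packages.

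Your detour through the $su$-structure also contains a genuine mistake: case~(4) of Theorem~\ref{codim1} explicitly produces a foliation by \emph{non-compact} leaves, not compact $su$-tori, so it cannot supply the $T^2$-foliation you want in the $C^1$ case. The relevant alternative when an $su$-torus exists is case~(2), but that only asserts infinitely many compact accessibility classes inside $\Gamma(f)$, not a foliation of $M$. You correctly flag the $C^1$ obstacle at the end, but you do not resolve it---and the whole issue evaporates once you invoke Theorem~\ref{classification_sol}, which needs only dynamical coherence, not any $su$-foliation. A minor additional point: your list of possible $T^2$-bundles includes Heisenberg nilmanifolds (parabolic monodromy), but Theorem~\ref{maptori} already excludes these whenever an invariant $su$-torus is present, so that case should not appear.
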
  

This demonstrates that any partially hyperbolic diffeomorphism avoiding the two model classes above must be accessible in the absence of periodic points.  

The structure of this paper is as follows. In Section \ref{Preliminaries}, we review foundational concepts and prior useful results for our analysis. Section \ref{section3} focuses on systems with one-dimensional center bundles, where we present the proofs of Theorem \ref{integrability} and Theorem \ref{codim1}. In Section \ref{section4}, we first establish the completeness of the center-stable foliation for systems with one-dimensional center bundles (Proposition \ref{complete}). We then specialize to three-dimensional manifolds to prove Theorem \ref{accessible}. Finally, Section \ref{consequence} explores further implications, including the classification result (Theorem \ref{classification}) and plaque expansiveness properties (Theorem \ref{plaqueexpansivity}).

\section{Preliminaries}\label{Preliminaries}

\subsection{Partial hyperbolicity}\label{PH}

Let $M$ be a compact Riemannian manifold. We say a diffeomorphism $f: M\rightarrow M$ is \emph{partially hyperbolic} if the tangent bundle of $M$ splits into three nontrivial invariant subbundles $TM=E^s \oplus E^c \oplus E^u$ such that for an adapted metric and for all $x\in M$ and unit vectors $v^\sigma \in E^\sigma_x$ ($\sigma= s, c, u$), 
\begin{equation*}
	\|Df(x)v^s\|<1<\|Df(x)v^u\| \\
	\qquad and \qquad
	\|Df(x)v^s\|< \|Df(x)v^c\|< \|Df(x)v^u\|.
\end{equation*}

A canonical class of partially hyperbolic diffeomorphisms comprises Anosov systems exhibiting dominated splitting into at least three invariant subbundles, where the center bundle retains hyperbolicity. As a natural generalization, partially hyperbolic systems share fundamental dynamical properties with Anosov systems, particularly regarding their strong bundles. For instance, strong bundles always satisfy unique integrability.  

\begin{thm}\cite{BrinPesin74}\cite{HPS77}\label{HPS}  
    Let $f: M \to M$ be a partially hyperbolic diffeomorphism. Then there exist unique invariant foliations $\mathcal{F}^s$ and $\mathcal{F}^u$ tangent to $E^s$ and $E^u$ respectively.  
\end{thm}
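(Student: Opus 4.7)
The plan is to establish the result for $\mathcal{F}^s$; the statement for $\mathcal{F}^u$ follows by applying the same argument to $f^{-1}$, which is partially hyperbolic with the roles of $E^s$ and $E^u$ exchanged. The strategy is the classical Hadamard--Perron graph transform method. At each $x\in M$, working in an exponential chart centered at $x$ and using the linear splitting $T_xM = E^s_x\oplus (E^c_x\oplus E^u_x)$, I consider the space of Lipschitz graphs over a small ball in $E^s_x$ with Lipschitz constant less than some $\alpha>0$ chosen smaller than the domination gap between $E^s$ and $E^c\oplus E^u$. The map $Df$, read in charts centered at $x$ and $f(x)$, induces a graph transform on these graph spaces. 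The first step is to verify an $E^s$-cone field invariance: in the adapted metric the strict inequalities $\|Df v^s\|<\|Df v^c\|$ and $\|Df v^s\|<\|Df v^u\|$ imply that $Df^{-1}$ sends the $\alpha$-cone around $E^s_{f(x)}$ strictly inside the $\alpha$-cone around $E^s_x$, which in turn makes the graph transform well-defined and contracting in the sup metric because $\|Df|_{E^s}\|<1$.

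Applying the Banach fixed-point theorem yields, for each $x$, a unique Lipschitz section whose graph $W^s_{\mathrm{loc}}(x)$ is tangent to $E^s_x$ at the origin and satisfies $f(W^s_{\mathrm{loc}}(x))\subset W^s_{\mathrm{loc}}(f(x))$. A second graph transform, applied now on the bundle of tangent subspaces along these local manifolds, is again a contraction thanks to domination; its fixed point shows $W^s_{\mathrm{loc}}(x)$ is $C^1$ with $T_yW^s_{\mathrm{loc}}(x)=E^s_y$ at every $y\in W^s_{\mathrm{loc}}(x)$. To assemble the global foliation $\mathcal{F}^s$ I would check, using uniqueness of the fixed point at nearby base points, that $y\in W^s_{\mathrm{loc}}(x)$ forces $W^s_{\mathrm{loc}}(y)$ to coincide with a neighborhood of $y$ in $W^s_{\mathrm{loc}}(x)$, so local leaves glue coherently; the global leaf through $x$ is then $\mathcal{F}^s(x)=\bigcup_{n\geq 0} f^{-n}(W^s_{\mathrm{loc}}(f^n(x)))$, an injectively immersed $C^1$ submanifold.

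The main obstacle is unique integrability, and it is where the center bundle makes the argument more delicate than in the Anosov case. Given any $C^1$ curve $\gamma:[-1,1]\to M$ with $\dot\gamma(t)\in E^s_{\gamma(t)}$, the plan is to show $\gamma(t)\in W^s_{\mathrm{loc}}(\gamma(t_0))$ for all $t$ close to $t_0$. Iterating $f^n$, the length of $f^n(\gamma)$ decays exponentially because $\dot\gamma$ lies in $E^s$ and $f^n(\gamma)$ remains tangent to $E^s$; hence $f^n(\gamma)$ eventually sits in an arbitrarily small ball around $f^n(\gamma(t_0))$, inside the $E^s$-cone there, and so inside $W^s_{\mathrm{loc}}(f^n(\gamma(t_0)))$. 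Pulling back by $f^{-n}$ and using the forward invariance of the local leaves gives $\gamma\subset W^s_{\mathrm{loc}}(\gamma(t_0))$. The subtle point that must be handled carefully is that, lacking uniform expansion on the complement of $E^s$, one cannot directly argue that a transverse displacement grows; instead one must exploit only the \emph{relative} rate $\|Df v^s\|/\|Df v^c\|<1$, which is exactly why both the adapted metric and the strict pointwise domination inequalities enter the proof.
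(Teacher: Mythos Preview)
The paper does not provide its own proof of this statement; Theorem~\ref{HPS} is quoted as a background result with references to \cite{BrinPesin74} and \cite{HPS77}, and the paper simply uses it. There is therefore nothing in the paper to compare your argument against.

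That said, your outline is the standard Hadamard--Perron graph transform proof and is essentially what those references do, so it is the right approach. One small point worth tightening: in your unique integrability step you write that once $f^n(\gamma)$ is short and inside the $E^s$-cone near $f^n(\gamma(t_0))$, it lies inside $W^s_{\mathrm{loc}}(f^n(\gamma(t_0)))$, and then you ``pull back by $f^{-n}$ using forward invariance.'' Forward invariance $f(W^s_{\mathrm{loc}}(x))\subset W^s_{\mathrm{loc}}(f(x))$ does not directly give you the inclusion under $f^{-n}$; what you actually use is that $\gamma\subset f^{-n}(W^s_{\mathrm{loc}}(f^n(\gamma(t_0))))\subset \mathcal{F}^s(\gamma(t_0))$ by your own definition of the global leaf. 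Alternatively, and more in the spirit of the cited sources, one argues that $d(f^n(\gamma(t)),f^n(\gamma(t_0)))\to 0$ exponentially for all $t$, and then invokes the dynamical characterization of $W^s_{\mathrm{loc}}(\gamma(t_0))$ as the set of points whose forward orbit stays $\epsilon$-close to that of $\gamma(t_0)$. Either way the conclusion is correct; only the justification needs a one-line adjustment.
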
  

The foliations $\mathcal{F}^s$ and $\mathcal{F}^u$ are called the \emph{stable foliation} and \emph{unstable foliation} respectively. However, the center bundle $E^c$ is not necessarily integrable, and its integral manifolds may lack uniqueness even when a center foliation exists. The integrability of $E^c$ remains a longstanding challenge.  

For one-dimensional center distributions, the existence theorem of ordinary differential equations guarantees a $C^1$ curve through every point tangent to $E^c$. The local stable manifold of such a curve forms a $C^1$ disk tangent to $E^s \oplus E^c$ with dimension $\dim E^s + \dim E^c$. Consequently, through each point passes an immersed complete $C^1$ manifold everywhere tangent to $E^s \oplus E^c$. However, these integral manifolds may self-intersect or fail to form a foliation \cite{BurnsWilkinson08, BBI09}.  

Crucially, the existence of immersed integral manifolds is non-trivial for arbitrary joint distributions and depends on the dynamics of complementary invariant subbundles. The joint bundle $E^s \oplus E^u$ exhibits fundamentally distinct behavior, as we shall demonstrate.  

A partially hyperbolic diffeomorphism is \emph{dynamically coherent} if there exist invariant foliations $\mathcal{F}^{cs}$ and $\mathcal{F}^{cu}$ tangent to $E^s \oplus E^c$ and $E^c \oplus E^u$ respectively. We call $\mathcal{F}^{cs}$ (resp. $\mathcal{F}^{cu}$) the \emph{center-stable} (resp. \emph{center-unstable}) foliation. Under dynamical coherence, intersecting $\mathcal{F}^{cs}$ and $\mathcal{F}^{cu}$ yields an $f$-invariant center foliation $\mathcal{F}^c$ tangent to $E^c$. When $E^c$ is integrable, the invariant foliation $\mathcal{F}^c$ tangent to $E^c$ is called the \emph{center foliation}. The distribution $E^c$ is \emph{uniquely integrable} if there exists an $f$-invariant $C^0$ foliation $\mathcal{F}^c$ with $C^1$ leaves such that every embedded $C^1$ curve $\sigma: [0,1] \to M$ satisfying $\dot{\sigma}(t) \in E^c(\sigma(t))$ for all $t \in [0,1]$ lies entirely within the leaf $\mathcal{F}^c(\sigma(0))$.  

Unique integrability of $E^c$ implies the existence of center foliation, center-stable and center-unstable foliations. In particular, the center unique integrability implies dynamical coherence. However, dynamical coherence does not guarantee unique integrability. A center-stable manifold may exist without being a leaf of any center-stable foliation (see \cite{2016example,2018survey}). Thus unique integrability is strictly stronger than dynamical coherence despite both implying the existence of $\mathcal{F}^c$. For variations on center integrability, see \cite{BurnsWilkinson08}.  

In \cite{2015Center}, Hertz, Hertz, and Ures established:  

\begin{thm}\label{nocompactleaf}\cite{2015Center}  
    For any dynamically coherent partially hyperbolic diffeomorphism $f: M^3 \to M^3$ on a closed 3-manifold, the center-stable and center-unstable foliations admit no compact leaves.  
\end{thm}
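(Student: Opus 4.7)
The plan is to argue by contradiction, treating only the center-stable case since the center-unstable case follows by applying the result to $f^{-1}$. Suppose $L$ is a compact leaf of $\mathcal{F}^{cs}$. Since $M$ is three-dimensional and the three bundles $E^s, E^c, E^u$ are all nontrivial, each is one-dimensional, so $L$ is a closed $2$-surface carrying two transverse nonvanishing line fields: $E^s|_L$ and $E^c|_L$, the latter tangent to the center foliation $\mathcal{F}^c$ that exists by dynamical coherence. This forces $\chi(L)=0$, so $L$ is a torus or Klein bottle, and after a double cover of $M$ I may assume $L\cong T^2$. I would then reduce to the case $f(L)=L$ by a standard compactness argument: either the orbit $\{f^n(L)\}$ is finite, giving an iterate that fixes $L$ setwise, or it is infinite and a Hausdorff-limit analysis combined with the uniform transverse geometry of $cs$-leaves (forced by partial hyperbolicity) produces a periodic compact leaf among the accumulation set. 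Either way, after passing to an iterate and possibly a further finite cover, assume $f(L)=L$.

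With $g:=f|_L$ in hand, the next step uses the contraction of $g$ along $\mathcal{F}^s|_L$. I would first show that $\mathcal{F}^s|_L$ has no circle leaves: such a $C$, after a further iterate setwise fixing it, would satisfy $g^k(C)=C$ while $\operatorname{length}(g^k(C))<\operatorname{length}(C)$, contradicting the uniform contraction. Hence all leaves of $\mathcal{F}^s|_L$ are non-compact lines and, by the classical Kneser--Denjoy theory on the $2$-torus, $\mathcal{F}^s|_L$ is semiconjugate to an irrational linear foliation with every leaf dense in $L$. In particular, the global stable manifold $W^s(p)\subset L$ (by tangency of $E^s$ to $TL$ and completeness of $\mathcal{F}^s$) is dense in $L$ for every $p\in L$.

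The crux will be extracting a contradiction from this configuration. The plan is to exploit the codimension-one embedding of $L$ in $M$ via its holonomy representation $\rho:\pi_1(L)=\mathbb{Z}^2\to\operatorname{Diff}(\mathbb{R},0)$ on a transversal tangent to $E^u$. The diffeomorphism $g$ normalizes $\rho$ through its induced action $A\in\operatorname{GL}(2,\mathbb{Z})$ on $\pi_1(L)$, which must preserve the irrational slope of $\mathcal{F}^s|_L$; meanwhile $f$ acts on the unstable transversal by uniform expansion. Analyzing the compatibility of these two structures separately in the cases where $A$ is hyperbolic (so has a single irrational eigendirection) or of infinite order parabolic, one expects to force $\rho$ to contain simultaneously contracting and expanding germs along the same transversal axis, which is incompatible with $L$ being a genuine compact leaf (iterated holonomy would push nearby $cs$-leaves away from $L$ and destroy the local foliated structure). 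The main obstacle is executing this holonomy analysis cleanly without invoking auxiliary classification theorems for partially hyperbolic diffeomorphisms on torus bundles; if it proves too indirect, I would instead lift to the universal cover $\tilde M$, where $\tilde L\cong\mathbb{R}^2$ is a properly embedded plane separating $\tilde M$, and use the $\mathbb{Z}^2$-family of parallel lifts of $L$ together with the uniform unstable expansion to run a Novikov--Palmeira-type argument showing that this planar separating configuration is incompatible with the global growth of $\mathcal{F}^u$.
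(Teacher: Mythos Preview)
This theorem is quoted from \cite{2015Center} without proof in the present paper, so there is no argument here to compare your proposal against. Assessing the proposal on its own terms, the decisive gap is the final step. You claim that forcing the holonomy $\rho:\mathbb{Z}^2\to\mathrm{Diff}(\mathbb{R},0)$ to contain ``simultaneously contracting and expanding germs along the same transversal axis'' is ``incompatible with $L$ being a genuine compact leaf'', because ``iterated holonomy would push nearby $cs$-leaves away from $L$ and destroy the local foliated structure''. This is not a contradiction: compact leaves with mixed hyperbolic holonomy exist in abundance---suspend the commuting germs $t\mapsto 2t$ and $t\mapsto t/3$ over $T^2$, and the zero section is exactly such a torus leaf---and holonomy by construction \emph{preserves} the foliated structure, so it cannot ``destroy'' anything. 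The additional datum that $f$ expands the unstable transversal and conjugates $\rho$ through $A=f_*$ does impose the relation $F\rho(\gamma)F^{-1}=\rho(A\gamma)$ with $F$ an expanding germ, but turning this into a contradiction is a substantive piece of work that your sketch does not supply; your fallback Novikov--Palmeira idea is likewise only a gesture.

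Two earlier steps are also incomplete. Your reduction to a periodic compact leaf is asserted but not argued: Haefliger's closedness of the set of compact leaves gives accumulation, not periodicity, and the passage from one to the other is already nontrivial in the cited source. Your density claim for $W^s(p)\subset L$ does not follow either---semiconjugacy to an irrational linear foliation does not exclude an exceptional minimal set, and Denjoy's theorem is unavailable since $\mathcal{F}^s|_L$ need only be $C^0$ with $C^1$ leaves. (You do not actually use density afterwards, so this is a side issue rather than a fatal one.) The proof in the cited reference proceeds by a different mechanism, working directly with the transverse unstable dynamics near the torus rather than with the holonomy representation of $\mathcal{F}^{cs}$.
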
  

Furthermore, 3-manifolds containing compact tori tangent to $E^s \oplus E^c$, $E^c \oplus E^u$, or $E^s \oplus E^u$ are fully classified. The following theorem demonstrates that such $cs$-, $cu$-, or $su$-tori imply virtual solvability of the fundamental group:  

\begin{thm}\label{maptori}\cite{2011TORI}  
    Let $f: M^3 \to M^3$ be a partially hyperbolic diffeomorphism on a closed orientable 3-manifold. If there exists an $f$-invariant embedded 2-torus $T$ tangent to $E^s \oplus E^u$, $E^c \oplus E^u$, or $E^c \oplus E^s$, then $M^3$ must be one of:  
    \begin{enumerate}  
        \item the 3-torus $\mathbb{T}^3$;  
        \item the mapping torus of $-id: \mathbb{T}^2 \to \mathbb{T}^2$;  
        \item the mapping torus of a hyperbolic automorphism on $\mathbb{T}^2$.  
    \end{enumerate}  
\end{thm}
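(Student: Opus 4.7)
The plan is to reduce to a classification of closed orientable 3-manifolds that admit an incompressible $f$-invariant 2-torus supporting Anosov-type dynamics, and I would proceed in three stages. First, analyze the induced dynamics $f|_T$. Because $\dim M = 3$ and $T$ is tangent to a two-dimensional $Df$-invariant subbundle, one of the strong foliations $\mathcal{F}^s$ or $\mathcal{F}^u$ has all leaves through points of $T$ contained entirely in $T$ (by unique integrability, Theorem~\ref{HPS}). Hence $f|_T$ inherits a uniformly hyperbolic invariant one-dimensional sub-bundle. In the $su$-torus case both $E^s$ and $E^u$ lie in $TT$, so $f|_T$ is Anosov and, by Franks' theorem, conjugate to a linear hyperbolic automorphism of $\mathbb{T}^2$; its strong sub-foliations are then minimal irrational linear foliations. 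In the $cs$ or $cu$ case, the center direction restricted to $T$ is dominated by the hyperbolic one, and a Denjoy-style argument together with domination forces $f|_T$ to be either Anosov or isotopic to $\pm\mathrm{id}$ with a minimal invariant linear foliation.

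The second step is to show that $T$ is $\pi_1$-injective in $M^3$. Suppose for contradiction there is a compressing disk $D \subset M$ with essential boundary in $T$. By the first step there is a one-dimensional strong foliation $\mathcal{F}^{\ast}$ of $M$ whose restriction to $T$ is a minimal linear foliation. Working in a tubular neighborhood $T \times (-\varepsilon,\varepsilon)$ of $T$ transverse to the complementary strong bundle, I would propagate $\partial D$ by the holonomy of $\mathcal{F}^{\ast}$ and use the density of $\mathcal{F}^{\ast}|_T$-leaves to obtain a loop in $T$ which is null-homotopic in $M$, but whose iterates under $f$ cannot all bound disks consistent with the uniform contraction/expansion of $\mathcal{F}^{\ast}$, contradicting the $f$-invariance of the whole configuration. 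This forces $T$ to be incompressible.

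Finally, invoke the Jaco--Shalen--Johannson decomposition of $M^3$ along the incompressible torus $T$. Each complementary piece is either Seifert-fibered, hyperbolic, or a thickened torus $T^2 \times I$; the $f$-invariance of $T$ and the induced Anosov-type action of $f|_T$ on $H_1(T;\mathbb{Z})$ from the first step rule out both Seifert and hyperbolic complementary pieces, since their canonical structures would be preserved by $f$ and would force an incompatible (non-hyperbolic or finite-order-but-non-central) element on $H_1(T;\mathbb{Z})$. Hence every complementary piece is a thickened torus, and $M$ is a torus bundle over $S^1$ with monodromy in the isotopy class of $f|_T$; the isotopy classes compatible with the dynamics of the first step are the identity, $-\mathrm{id}$, and a hyperbolic automorphism, yielding the three listed manifolds. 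The hardest step is the incompressibility argument, particularly in the $cs$/$cu$ cases where minimality of $f|_T$ is not automatic; the delicate interaction between the compressing disk $D$, the minimal strong foliation inside $T$, and the complementary strong foliation transverse to $T$ seems to require a careful case-by-case analysis.
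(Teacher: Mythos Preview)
This theorem is not proved in the present paper; it is quoted from \cite{2011TORI} as a background result in the Preliminaries and used without proof. There is therefore no ``paper's own proof'' to compare your proposal against.

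Regarding the proposal itself, the most concrete gap is in Step~3. You assert that $M$ is a torus bundle ``with monodromy in the isotopy class of $f|_T$,'' but this conflates two unrelated maps: the bundle monodromy is the gluing homeomorphism that defines $M$ and is determined by the manifold alone, whereas $f|_T$ is the restriction of the given diffeomorphism to the invariant torus. What actually constrains the monodromy $\phi$ is that $(f|_T)_*$ and $\phi_*$ must commute in $GL_2(\mathbb{Z})$ (because $f$ is a self-map of the mapping torus preserving a fiber), and the centralizer of a hyperbolic element of $GL_2(\mathbb{Z})$ contains only $\pm\mathrm{id}$ and hyperbolic elements. That centralizer computation is the source of the trichotomy, and your outline never produces it. A related issue: $T$ need not be a JSJ torus, and $f$ need not preserve the JSJ pieces even up to isotopy, so the step ``canonical structures would be preserved by $f$'' requires justification you have not supplied.

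There are softer problems earlier. In Step~1 for the $cs$/$cu$ case, $f|_T$ has one genuinely central direction and need not be Anosov nor isotopic to $\pm\mathrm{id}$; what one actually uses is that the strong foliation on $T$ has no closed leaf (a closed strong leaf in $M$ is impossible), hence is conjugate to an irrational linear foliation. In Step~2 your incompressibility sketch (``propagate $\partial D$ by holonomy and derive a contradiction with contraction/expansion'') is too vague to evaluate; the argument in \cite{2011TORI} proceeds differently and is more topological in nature.
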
  

Dynamically coherent partially hyperbolic diffeomorphisms on these three manifold types are classified up to leaf conjugacy:  

\begin{thm}\cite{Hammerlindl13,HP14,HP15}\label{classification_sol}  
    Let $f: M \to M$ be a dynamically coherent partially hyperbolic diffeomorphism on a closed 3-manifold with virtually solvable fundamental group. Then, after finite lift and iteration, $f$ is leaf-conjugate to one of:  
    \begin{enumerate}  
        \item a linear Anosov diffeomorphism on $\mathbb{T}^3$;  
        \item a skew product over a linear Anosov diffeomorphism of $\mathbb{T}^2$;  
        \item the time-one map of a suspension Anosov flow.  
    \end{enumerate}  
\end{thm}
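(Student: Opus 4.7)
The plan is to reduce to topology first, then exhibit a global product structure of the lifted foliations in the universal cover, and finally construct a leaf conjugacy to an algebraic model, following the strategy of Hammerlindl's thesis together with the Hammerlindl--Potrie papers.

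First I would invoke Perelman's geometrization together with the classification of virtually solvable 3-manifold groups to reduce, after passing to a finite cover and an iterate, to the case where $M$ is one of three model manifolds: the 3-torus $\mathbb{T}^3$, a nilmanifold carrying Nil geometry (for instance a nontrivial circle bundle over $\mathbb{T}^2$), or a solvmanifold (the mapping torus of a hyperbolic toral automorphism). Correspondingly the universal cover $\tilde M$ is diffeomorphic to $\mathbb{R}^3$ and carries the structure of one of the simply connected unimodular Lie groups $\mathbb{R}^3$, $\mathrm{Nil}$, or $\mathrm{Sol}$. Passing to an iterate additionally ensures that $f$ acts in an orientation-preserving way on each invariant subbundle, which avoids bookkeeping later.

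Next, I would lift $f$ and all of $\mathcal{F}^s,\mathcal{F}^u,\mathcal{F}^c,\mathcal{F}^{cs},\mathcal{F}^{cu}$ to $\tilde M$ and establish a global product structure. In the Euclidean and Nil settings the key input is Brin--Burago--Ivanov: one proves that $\tilde{\mathcal{F}}^s$ and $\tilde{\mathcal{F}}^u$ are quasi-isometrically embedded in $\tilde M$, which together with dynamical coherence forces each pair consisting of a leaf of $\tilde{\mathcal{F}}^{cs}$ and a leaf of $\tilde{\mathcal{F}}^{cu}$ to meet in a single complete center leaf. The Sol case is more delicate, since the strong foliations fail quasi-isometry; there one argues instead on the weak foliations, showing that leaves of $\tilde{\mathcal{F}}^{cs}$ and $\tilde{\mathcal{F}}^{cu}$ are topological planes that coincide up to finite Hausdorff distance with the canonical weak foliations of $\mathrm{Sol}$, which then yields the required product structure.

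With the global product structure in hand, I would use the induced action of $f$ on $H_1(M;\mathbb{R})$, or equivalently on the abelianization of the deck group, to produce an explicit algebraic model $A$ of matching type: a hyperbolic toral automorphism of $\mathbb{T}^3$, a skew product over a linear Anosov of $\mathbb{T}^2$, or the time-one map of a suspension Anosov flow on the same solvmanifold. The product structure on $\tilde M$ lets me define an equivariant homeomorphism $\tilde h:\tilde M\to\tilde M_A$ leaf-by-leaf, carrying $\tilde{\mathcal{F}}^{cs}$ and $\tilde{\mathcal{F}}^{cu}$ of $f$ onto the corresponding foliations of $A$ and matching center leaves; equivariance under the deck group ensures $\tilde h$ descends to a homeomorphism $h:M\to M_A$, and a comparison of the two actions on the leaf space of center leaves promotes $h$ to a leaf conjugacy $h\circ f\sim A\circ h$.

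The main obstacle will be Step 2 in the Sol case: without quasi-isometry of the strong foliations the standard route to product structure breaks down, and one must replace it by a coarse-geometric analysis on the weak foliations in $\tilde M$, ruling out Reeb-type pathology and showing that center leaves cannot spiral or limit onto themselves. This is the technical heart of Hammerlindl--Potrie, and adapting it is where most of the real work lies; the model-matching in Step 3 is then largely a bookkeeping exercise once the product structure and the algebraic action on $\pi_1(M)$ are pinned down.
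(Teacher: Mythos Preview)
The paper does not prove this statement: it is quoted as a known result from the cited references \cite{Hammerlindl13,HP14,HP15} in the Preliminaries, and is invoked only as a black box later (in the proofs of Theorem~\ref{classification} and Theorem~\ref{plaqueexpansivity}). There is therefore no proof in the paper to compare your proposal against. Your outline is a fair high-level summary of the strategy in those references, including the correct identification of the Sol case as the technical crux where strong-foliation quasi-isometry fails and one must work with the weak foliations instead.
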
  

A set is \emph{$s$-saturated} (resp. \emph{$u$-saturated}) if it is a union of stable (resp. unstable) leaves. A set is \emph{$su$-saturated} if it is both $s$- and $u$-saturated. For each $x \in M$, the \emph{accessibility class} $AC(x)$ is the minimal $su$-saturated set containing $x$—equivalently, any two points in $AC(x)$ are connected by an $su$-path (a piecewise $C^1$ curve tangent to $E^s \cup E^u$). Accessibility classes partition $M$. The diffeomorphism $f$ is \emph{accessible} if $M$ is the only accessibility class, which occurs precisely when $\Gamma(f) = \emptyset$, where $\Gamma(f)$ denotes the set of non-open accessibility classes.  

The bundles $E^s$ and $E^u$ are \emph{jointly integrable} at $x \in M$ if there exists $\delta > 0$ such that for all $y \in W^s_\delta(x)$ and $z \in W^u_\delta(x)$, the local manifolds satisfy $W^u_{\mathrm{loc}}(y) \cap W^s_{\mathrm{loc}}(z) \neq \emptyset$.  

\begin{prop}\label{jointint}\cite{08invent}  
    Let $f: M \to M$ be a partially hyperbolic diffeomorphism with $\dim E^c = 1$. Then $\Gamma(f)$ is a compact codimension-one invariant set laminated by accessibility classes. A point $x$ lies in $\Gamma(f)$ if and only if $E^s$ and $E^u$ are jointly integrable at every point in $AC(x)$. Moreover, $E^s$ and $E^u$ are jointly integrable everywhere if and only if $\Gamma(f) = M$.  
\end{prop}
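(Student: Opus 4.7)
The plan is to prove the four clauses of the proposition in the order: (a) openness of the union of open accessibility classes (hence compactness and $f$-invariance of $\Gamma(f)$), (b) the local dichotomy characterizing when an accessibility class is open in terms of joint integrability, (c) the codimension-one lamination structure of $\Gamma(f)$, and (d) the equivalence $\Gamma(f) = M \iff$ joint integrability everywhere. Throughout, the main geometric input is the local product structure of $\mathcal{F}^s$ and $\mathcal{F}^u$ combined with $\dim E^c = 1$, which makes $E^s \oplus E^u$ of codimension one and allows me to measure obstructions to commutativity of $s$- and $u$-holonomies as scalar displacements along $E^c$.

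First I would fix a point $x \in M$ and consider a small foliation box $B$ at $x$ in which $\mathcal{F}^s$ and $\mathcal{F}^u$ have the usual local product structure and in which $E^c$ can be trivialized by a smooth transverse curve. For $p \in B$ I would define the local accessibility class $AC_{\mathrm{loc}}(p,B)$ as the set of endpoints of finite $su$-paths starting at $p$ that stay inside $B$. The key elementary observation is that $E^s \oplus E^u$ is jointly integrable at $p$ precisely when $AC_{\mathrm{loc}}(p,B)$ is contained in a codimension-one $C^1$ disk tangent to $E^s \oplus E^u$ at $p$; this disk is simply the union of local stable leaves through a local unstable leaf, which closes up into a manifold exactly because of joint integrability. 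This dichotomy at the local level is the engine of the proof.

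Second, I would carry out the commutator argument showing that if joint integrability fails at some $y \in AC(x)$ then $AC(x)$ is open. The idea is to build an $su$-loop of four legs (stable, unstable, stable, unstable, each of adjustable length) and to interpret the failure of closure as a map into the one-dimensional transversal $E^c$. Since the leg lengths provide several continuous parameters and the failure of joint integrability produces a non-zero derivative of this closure map in the $E^c$ direction, by the implicit function theorem I can realize every small displacement along the transversal as the net displacement of some $su$-loop based at $y$; combined with the local product structure along $\mathcal{F}^s$ and $\mathcal{F}^u$, this shows that $AC(y) = AC(x)$ contains an open neighborhood of $y$. Conversely, if joint integrability holds at every point of $AC(x)$, then locally $AC(x)$ is trapped inside a codimension-one $C^1$ disk tangent to $E^s \oplus E^u$, hence cannot be open. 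This is the step I expect to be the main technical obstacle, since one must carefully quantify the order of vanishing of the closure defect and work with only $C^1$ bundles.

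With this in hand, the remaining clauses follow formally. The set of open accessibility classes is open by definition, so its complement $\Gamma(f)$ is closed, hence compact; since $f$ sends $\mathcal{F}^s,\mathcal{F}^u$ to themselves it permutes accessibility classes and preserves openness, so $\Gamma(f)$ is $f$-invariant. For every $x \in \Gamma(f)$ the step-two dichotomy forces joint integrability at every point of $AC(x)$, and the step-one local description then exhibits $AC(x)$ as a $C^1$ injectively immersed codimension-one submanifold tangent to $E^s \oplus E^u$. Patching these local manifolds together via the local product structure shows that $\Gamma(f)$ is laminated by these codimension-one classes. Finally, the characterization of $\Gamma(f)$ by joint integrability along accessibility classes gives immediately that $E^s$ and $E^u$ are jointly integrable everywhere in $M$ if and only if $AC(x)$ is non-open for every $x$, i.e.\ if and only if $\Gamma(f) = M$, completing the proof.
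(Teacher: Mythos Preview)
The paper does not prove this proposition; it is quoted from \cite{08invent} and stated without proof, so there is no in-paper argument to compare against. Your outline follows the general architecture of the original proof in \cite{08invent} (and related work of Didier): establish the open/non-open dichotomy for accessibility classes via a local commutator analysis, then read off compactness, invariance, the lamination structure, and the global equivalence.

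There is, however, a genuine gap in your step (b). You propose to apply the implicit function theorem to the map sending the leg-lengths of a four-legged $su$-loop to the closure defect in the $E^c$ direction, arguing that failure of joint integrability gives a nonzero derivative. But the stable and unstable holonomies are in general only H\"older continuous, not $C^1$: although each leaf of $\mathcal{F}^s$ and $\mathcal{F}^u$ is smooth, the family of unstable leaves varies only H\"older as the basepoint moves along a stable leaf. Consequently the endpoint map of an $su$-path is continuous in the leg-length parameters but not differentiable, and the implicit function theorem is unavailable. Your parenthetical about ``only $C^1$ bundles'' actually understates the problem---the invariant bundles are typically only H\"older.

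The fix uses $\dim E^c = 1$ in an essential way: the closure defect is a \emph{continuous} real-valued function of the leg lengths that vanishes for the trivial loop and is nonzero for some choice of legs exactly when joint integrability fails. The intermediate value theorem (not the implicit function theorem) then realizes every sufficiently small center displacement as the defect of some $su$-loop; combined with $s$- and $u$-saturation this gives an open neighborhood inside $AC(x)$. This topological argument, requiring only continuity, is precisely what makes the one-dimensional center case tractable and why the analogous statement for higher-dimensional center requires additional hypotheses.
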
  

The following proposition provides a sufficient condition for unique center integrability:  

\begin{prop}\label{Gamma(f)=M}\cite{2006Some}  
    Let $f: M \to M$ be a partially hyperbolic diffeomorphism with $\dim E^c = 1$. If $f$ has no periodic points and $E^s$, $E^u$ are jointly integrable, then $E^c$ is uniquely integrable.  
\end{prop}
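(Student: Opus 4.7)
The plan is to use the joint integrability hypothesis, which by Proposition~\ref{jointint} forces $\Gamma(f)=M$ and hence produces a codimension-one $f$-invariant $su$-foliation $\mathcal{W}^{su}$ on $M$, together with the absence of periodic points, to promote the local Peano integrability of the one-dimensional bundle $E^c$ into a unique, globally defined, $f$-invariant center foliation.

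To start, I would fix $x\in M$. In a foliation chart for $\mathcal{W}^{su}$ around $x$ the leaves give a continuous codimension-one product structure transverse to $E^c$. Peano's theorem produces a $C^1$ curve through $x$ tangent to $E^c$, and by transversality any such curve meets each local $su$-plaque in exactly one point near $x$. Thus, given two $C^1$ integral curves $\sigma_1,\sigma_2$ of $E^c$ through $x$, one can reparameterize them by the local leaf-space coordinate of $\mathcal{W}^{su}$ so that $\sigma_1(t)$ and $\sigma_2(t)$ lie on a common plaque $P_t$ for each sufficiently small $t$. Unique integrability then reduces to the local assertion that $\sigma_1(t)=\sigma_2(t)$ for all small $t$, since local uniqueness together with the continuity and $f$-invariance of $E^c$ lets one assemble the maximal integral curves into an $f$-invariant $C^0$ foliation with $C^1$ leaves.

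For the local uniqueness I would argue by contradiction using the dynamics. Suppose $\sigma_1(t_0)\neq\sigma_2(t_0)$ for some small $t_0$. Since $f$ sends $\mathcal{W}^{su}$-leaves to $\mathcal{W}^{su}$-leaves and preserves $E^c$, the iterates $f^n(\sigma_i)$ are again $C^1$ center curves through $f^n(x)$ and the pair $f^n(\sigma_1(t_0)),f^n(\sigma_2(t_0))$ lies on a common $su$-leaf. Within that leaf the in-leaf path connecting these two points decomposes, via the local product of $\mathcal{F}^s$ and $\mathcal{F}^u$, into a stable arc and an unstable arc; by partial hyperbolicity the stable component contracts exponentially under forward iteration and the unstable component contracts exponentially under backward iteration. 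I would then extract a subsequence $n_k\to\pm\infty$ along which the ``bigon'' bounded by the two candidate center arcs collapses onto a hyperbolic pattern in the limit, forcing the existence of a periodic point of $f$ and contradicting the hypothesis.

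The main obstacle is precisely this last step: converting the qualitative statement that non-uniqueness of center curves produces recurrence in a hyperbolic transverse direction into the production of an actual periodic orbit. One concrete strategy is to saturate the bigon bounded by $\sigma_1$ and $\sigma_2$ by $su$-plaques, obtain a topological disk whose boundary consists of two center arcs, and run a Brouwer/Poincar\'e--Bendixson-type argument on a return map built from the hyperbolic contractions in the two time directions. An alternative is to form a closed $f$-invariant subset out of the non-uniqueness locus, pass to a minimal subset, and use the transverse one-dimensionality of $E^c$ combined with the uniform hyperbolicity on $E^s\oplus E^u$ to force a periodic orbit inside that minimal set; either route ultimately contradicts the no-periodic-points assumption.
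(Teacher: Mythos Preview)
Your setup is correct: joint integrability gives $\Gamma(f)=M$ and hence an $f$-invariant $su$-foliation $\mathcal{W}^{su}$, any two candidate center curves through $x$ can be parameterized so that $\sigma_1(t),\sigma_2(t)$ lie in a common $su$-plaque, and the task is to derive a periodic point from $\sigma_1(t_0)\neq\sigma_2(t_0)$. But the mechanism you propose for that last step is not the right one, and you yourself flag it as the obstacle.

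The idea that the ``bigon'' collapses is backwards. Under forward iteration the stable part of the $su$-arc joining $\sigma_1(t_0)$ to $\sigma_2(t_0)$ contracts but the unstable part expands; under backward iteration the reverse happens. Nothing collapses in both time directions, so no hyperbolic limit pattern emerges this way. More importantly, the actual dynamical input you are missing is Lemma~\ref{lem2}: if $E^{cs}$ fails to be uniquely integrable at $x$, then for any short center separatrix $c$ through $x$ there is $N$ with $f^n(c)\not\subset B_\delta(f^n(x))$ for all $n\ge N$. In other words, the candidate center arcs \emph{grow}, not shrink, under iteration.

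With that in hand the argument runs on the one-dimensional leaf space of $\mathcal{W}^{su}$, not inside a leaf. Pick $z\in\omega(x)$; since $f$ has no periodic points, one finds three iterates $z_1,z_2,z_3$ in a small chart ordered transversally as $z_1<z_3<z_2$ (any two in a common $su$-plaque would, by the Anosov Closing Lemma~\ref{closinglemma}, give a periodic point). Take $n_1,n_2$ with $f^{n_i}(x)$ close to $z_i$, and center arcs $c_i$ at $f^{n_i}(x)$ whose endpoints lie on $su$-plaques. By Lemma~\ref{lem2} the images $f^{n}(c_i)$ eventually have length $>\delta$; choosing $n_3$ large with $f^{n_3}(x)$ near $z_3$, the projection of $f^{n_3-n_i}(c_i)$ to the transverse interval overlaps the original interval for $i=1$ or $i=2$, producing a contracting one-dimensional return map whose fixed point is an $su$-plaque returning to itself. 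Lemma~\ref{closinglemma} then yields a periodic point, a contradiction. This is exactly the content of Lemma~\ref{integ-accumulated} in the present paper (and of the original argument in \cite{2006Some}); your Brouwer/minimal-set alternatives never get this far because they lack the growth estimate that makes the interval-map step possible.
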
  

We emphasize that Proposition \ref{Gamma(f)=M} is a direct consequence of Theorem \ref{integrability}, but not equivalent: $\Gamma(f)$ may form a proper sublamination rather than a full foliation.

\subsection{Laminations}

A \emph{lamination }$\mathcal{L}$ of a compact Riemannian manifold $M$ is defined by a smooth atlas with charts of the form $\mathbb{R}^n\times T^k$, where $T^k$ is a locally compact subset of $\mathbb{R}^k$. More precisely, the collection of open charts $\{U_i\}$ with local product coordinate maps $\phi _i: U_i\rightarrow \mathbb{R}^n\times T^k$ covers $\mathcal{L}$ and the transition maps $\phi_j \circ \phi^{-1}_i: \mathbb{R}^n\times T^k\rightarrow \mathbb{R}^n\times T^k$ are homeomorphisms acting as 
$$\phi_j \circ \phi^{-1}_i(r,t)=(\psi_{ij}(r,t), \eta_{ij}(t) ),$$
where $\psi_{ij}$ are $C^1$ for the first coordinate $r\in \mathbb{R}^n$. The transverse direction $T^k$ need not admit differentiability. Here we are concerned with codimension-one laminations, which means $k=1$. The sets $\phi^{-1}_{i}(\mathbb{R}^n\times \{t\})$ are called \emph{plaques} of the lamination $\mathcal{L}$ and the sets $\phi^{-1}_{i}(\{r\}\times T^k)$ (or $\phi^{-1}_{i}(\{r\}\times \mathbb{R}^k)$) are \emph{transversals} to $\mathcal{L}$. Through any point $x\in M$, there are some plaques in each $U_i$ connected to a maximal injectively immersed $n$-manifold--the \emph{leaf} of $\mathcal{L}$ through $x$. Each pair of such immersed manifolds is either identical or disjoint. Given a chart $(U, \phi)$ on $M$, an open subset $U$ is a \emph{coordinate cube} if $\phi(U)$ is an open cube of $\mathbb{R}^n\times \mathbb{R}^k$.

A \emph{foliation} $\mathcal{F}$ is a lamination of $M$ that decomposes $M$ into leaves. Notice that the leaves of a foliation form a partition of $M$, and a foliation is a lamination with an empty complement. While foliation leaves are $C^1$ and possibly smooth, the foliation itself may only be continuous. This paper focuses primarily on codimension-one laminations (one-dimensional transversals).  

For general laminations or foliations, the collection of compact leaves may not form a compact set. However, this holds for codimension-one laminations:  

\begin{thm}\label{Hae62}\cite{Hae62}  
    The set of compact leaves of a $C^0$ codimension-one lamination forms a compact sublamination.  
\end{thm}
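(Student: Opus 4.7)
The plan is to reduce the theorem to showing that the union of compact leaves,
\[
K := \bigl\{ x \in M : \text{the leaf of } \mathcal{L} \text{ through } x \text{ is compact} \bigr\},
\]
is a closed subset of $M$. Granting this, $K$ is compact as a closed subset of the compact manifold $M$; moreover, the coordinate cubes of $\mathcal{L}$ restrict to coordinate cubes of $\mathcal{L}|_K$ with transversal set $T \cap K$, which is locally compact as a closed subset of the locally compact $T$. Hence $\mathcal{L}|_K$ is the desired compact sublamination.

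To prove closedness, take $x_j \in K$ with $x_j \to x$, and let $L_j$ be the compact leaf through $x_j$ and $L$ the leaf through $x$. The task is to show that $L$ is compact. First, I would establish a plaque-chain shadowing principle. Fix a coordinate cube $\phi : U_x \to \mathbb{R}^n \times T$ with $\phi(x) = 0$, so $\phi(x_j) = (r_j, t_j)$ with $t_j \to 0$ in the one-dimensional transversal $T$. For any $y \in L$, choose a plaque chain in $L$ from $x$ to $y$ and cover it by coordinate cubes $U_x = U_0, \ldots, U_m$; the composition of local transition maps yields a plaque-chain holonomy germ $h_y : (T, 0) \to (T_y, 0_y)$. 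For $j$ large enough that $t_j$ lies in the domain of $h_y$, extending the plaque chain of $L_j$ starting at $x_j$ along the same coordinate cubes produces a point $y_j \in L_j$ with $y_j \to y$.

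Next, I would derive a contradiction from the assumption that $L$ is non-compact. In that case $L$ is not closed in $M$, so there exists $y^* \in \overline{L} \setminus L$; take a coordinate cube at $y^*$ with one-dimensional transversal $\tau$ (a small compact interval). The accumulation of $L$ at $y^*$ yields infinitely many distinct parameters $\tau_1, \tau_2, \ldots \in L \cap \tau$ clustering at the $\tau$-parameter $\tau^*$ of $y^*$. The shadowing principle produces, for each $k$ and all $j$ sufficiently large, a point of $L_j \cap \tau$ at height $\tau_k^{(j)}$ with $\tau_k^{(j)} \to \tau_k$ as $j \to \infty$. On the other hand, the compactness of $L_j$ together with the compactness of $\tau$ makes $L_j \cap \tau$ a closed discrete subset of $\tau$, hence finite of some cardinality $N_j$.

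The main obstacle is to extract a contradiction by producing, for some single $j$, more than $N_j$ distinct shadows $\tau_k^{(j)}$. The asserted convergence $\tau_k^{(j)} \to \tau_k$ is only pointwise in $k$, and plaque-chain holonomies along long chains are not a priori equicontinuous in the $C^0$ category. The resolution is a diagonal argument exploiting (i) the clustering $\tau_k \to \tau^*$, which restricts attention to plaque chains ending in a fixed small neighborhood of $y^*$, and (ii) the uniform modulus of continuity of local transition maps coming from a finite atlas on the compact $M$. Combined with passage to a Hausdorff-convergent subsequence of $(L_j)$ to stabilize behavior near $y^*$, these ingredients should allow the simultaneous shadowing of arbitrarily many $\tau_k$'s for a single large $j$, producing enough distinct plaque heights in $L_j \cap \tau$ to exceed $N_j$ and contradicting the compactness of $L_j$.
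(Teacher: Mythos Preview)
The paper does not supply a proof of this statement: it is quoted from Haefliger~\cite{Hae62}, with the remark that the lamination version can be found in Hector--Hirsch. So there is no ``paper's own proof'' to compare against; your proposal stands on its own.

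On the substance of your sketch: the reduction to closedness of $K$ and the shadowing idea are the right opening moves, and you correctly identify the main obstacle. However, your proposed resolution of that obstacle is not a proof, and I do not think it can be made into one as stated. The plaque chains $C_k$ from $x$ to the accumulation points $\tau_k$ have lengths going to infinity with $k$, because $L$ is non-compact and the $\tau_k$ are being visited by $L$ on successive ``passes'' near $y^*$. Uniform continuity of \emph{individual} transition maps from a finite atlas says nothing about compositions of arbitrarily many of them; there is no equicontinuity to extract. Your point~(i), that the $\tau_k$ cluster near $\tau^*$, controls only the \emph{endpoints} of the chains, not their lengths or the holonomy along them. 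Passing to a Hausdorff limit of the $L_j$ does not help either: even if $L_j \to \overline{L}$ in the Hausdorff sense, this gives no quantitative control on how many times a particular $L_j$ crosses $\tau$, which is exactly what you need.

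Concretely, nothing in your argument rules out the following scenario: $N_j \to \infty$, and for each fixed $j$ the holonomy germs $h_{y_k}$ are defined at $t_j$ only for $k \leq N_j$. Then you never produce more than $N_j$ shadows for any single $j$, and there is no contradiction. The classical proofs (Haefliger, or the treatment in Hector--Hirsch) avoid this by working more structurally with the holonomy of compact leaves in codimension one --- showing, roughly, that a compact leaf approached by other compact leaves has trivial one-sided holonomy, hence a one-sided product neighborhood (a local Reeb stability statement), which then forces the limit leaf to be compact. You should consult those references rather than try to push the shadowing-and-counting approach through directly.
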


We remark that this property was originally stated for foliations and, in fact, it holds for laminations, see for instance \cite{HectorHirsch}.

Consider a codimension-one proper lamination $\mathcal{L}$ that is not a foliation on $M$. A \emph{complementary region} $V$ of $\mathcal{L}$ is a path connected component in $M\setminus \mathcal{L}$. A \emph{closed complementary region} $\hat{V}$, which may be not compact, is a completion of a complementary region $V$ with respect to the path metric induced by the Riemannian metric. A leaf $L$ is called a \emph{boundary leaf} of the lamination $\mathcal{L}$ if for every point $x\in L$, there exists an arc $\alpha$ through $x$ satisfying $\alpha\cap \mathcal{L}=\{x\}$. We also say that $L$ is a boundary leaf of a complementary region $V$ if the arc $\alpha: [0,1]\rightarrow M$ above can be chosen with $\alpha(0)=x\in L$ and $\alpha\left(0,1\right]\subset V$. The union of boundary leaves of $V$ is denoted by $\partial \hat{V}$. When $\mathcal{L}$ has no isolated leaves, the set $\hat{V}$ is exactly the union of $V$ and its boundary leaves.

The preceding concepts apply to arbitrary dimensions. Let us introduce some terminologies specializing to three-manifolds. A surface $S$ is \emph{incompressible} if the inclusion map $i_*:=\pi_1(S)\hookrightarrow\pi_1(M)$ is injective; or equivalently, for any embedded disk $D$ with $D\cap S=\partial D$, the boundary $\partial D$ is null-homotopic in $S$. Let $S$ be a properly embedded surface in $M$, that is, its boundary $\partial S$ is completely contained in the boundary $\partial M$ of $M$. We say that $S$ is \emph{end-incompressible} if for each disk $D\subset M$ whose boundary $\partial D$ consists of a curve $\beta:=D\cap S$ and a boundary curve $\gamma:= D\cap \partial M$, there is a disk $D'\subset S$ such that $\partial D'$ consists of the curve $\beta$ and a boundary $\partial D'\cap \partial M\subset \partial S$. In a boundaryless manifold, an incompressible surface is also end-incompressible. A 3-manifold is \emph{irreducible} if every embedded 2-sphere in the manifold bounds a 3-ball. Similarly, we say that a subset $A\subset M$ is \emph{irreducible} if every embedded 2-sphere in $A$ bounds a 3-ball in $A$.
A lamination is \emph{essential} if it contains no spherical leaf or torus leaf bounding a solid torus, and furthermore any closed complementary region $\hat{V}$ is irreducible and the boundary leaves of $\partial \hat{V}$ are both incompressible and end-incompressible in $\hat{V}$. 

For closed 3-manifolds not homeomorphic to $\mathbb{T}^3$, every essential lamination must contain a non-planar leaf, as established by:  

\begin{thm}\cite{Rosenberg,Gabai90}\label{rosenberg}  
    If a closed 3-manifold admits a $C^0$ essential lamination consisting entirely of planes, then it must be the 3-torus.  
\end{thm}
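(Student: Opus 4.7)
The plan is to reduce the lamination case to the foliation case of Rosenberg's original theorem, which asserts that a closed $3$-manifold foliated by planes is homeomorphic to $\mathbb{T}^3$. The key task is therefore to extend the essential lamination $\mathcal{L}$ to a foliation of $M$ by planes by filling in the complementary regions in a coherent way.

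First, I would pass to the universal cover $\tilde{M}$ with the pulled-back lamination $\tilde{\mathcal{L}}$; every leaf is a properly embedded plane. Using essentiality—irreducibility of closed complementary regions, incompressibility of boundary leaves, and the absence of sphere leaves—one can show that $\tilde{M}$ is simply connected with vanishing $\pi_2$, and is exhausted by compact submanifolds bounded by disks in the transverse direction, hence $\tilde{M} \cong \mathbb{R}^3$. Each leaf of $\tilde{\mathcal{L}}$ then separates $\tilde{M}$ into two open half-spaces.

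Next, I would show that every closed complementary region $\hat{V}$ in $\tilde{M}$ is a trivial product: homeomorphic to $\mathbb{R}^2 \times I$, $\mathbb{R}^2 \times [0,\infty)$, or $\mathbb{R}^3$. This follows because $\hat{V}$ is irreducible and each of its boundary planes is incompressible and end-incompressible in $\hat{V}$; any essential transverse arc can be capped off, and combined with irreducibility this rules out every non-product topology by standard arguments in open $3$-manifold topology. Each such product region admits an obvious foliation by planes extending its boundary leaves, and since this extension is intrinsic to the region and its boundary plane structure, it is automatically $\pi_1(M)$-equivariant. Assembling these fillings produces a foliation $\tilde{\mathcal{F}}$ of $\tilde{M} \cong \mathbb{R}^3$ by planes, invariant under deck transformations, which descends to a foliation $\mathcal{F}$ of $M$ by planes. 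Rosenberg's original theorem (or, equivalently, Palmeira's product structure for plane foliations of $\mathbb{R}^3$ combined with equivariance under $\pi_1(M)$) then forces $M \cong \mathbb{T}^3$.

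The main obstacle is the analysis of the complementary regions, and this is precisely where end-incompressibility does the essential work: without it one can exhibit plane laminations of $\mathbb{R}^3$ whose complementary regions contain nontrivial topology, and the filling-in step either fails or destroys the plane structure. A secondary technical point is arranging the product fillings coherently across the orbit of a complementary region under the deck group, so that the resulting foliation descends; this compatibility is built into the essential-lamination machinery of Gabai--Oertel, and it is exactly what distinguishes the lamination statement (Gabai) from Rosenberg's original foliation argument.
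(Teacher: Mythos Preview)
The paper does not prove this theorem; it is quoted as a cited result, with only the remark that Rosenberg handled the $C^2$ foliation case and that Gabai extended it to essential laminations via \cite[Theorem 3.1]{Imanishi74}. Your outline follows the same overall strategy one would attribute to Gabai---fill the complementary regions to obtain a $C^0$ foliation by planes, then apply the foliation result---but your endgame differs from what the paper singles out. The paper points to Imanishi's theorem (a $C^0$ codimension-one foliation without holonomy is topologically conjugate to one defined by a closed $1$-form, and a foliation by simply connected leaves trivially has no holonomy), which brings the resulting $C^0$ foliation back into the range of Rosenberg's original $C^2$ argument; you instead reach for Palmeira's product theorem in the universal cover. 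Both routes are legitimate once the filling step is done. Imanishi's route stays downstairs in $M$ and sidesteps the equivariance bookkeeping; Palmeira's route is more geometric but turns the deck-group compatibility you flag in your last paragraph into a genuine obligation.

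That compatibility is the one place your sketch is thin: the phrase ``automatically $\pi_1(M)$-equivariant'' is not obvious for an abstract product structure on $\mathbb{R}^2\times I$, since such structures are far from unique. You are right that the fix ultimately rests on the Gabai--Oertel $I$-bundle decomposition of the complementary regions (which is canonical up to isotopy and hence does descend), rather than on anything intrinsic to the plane topology of the lifted region alone.
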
  

Rosenberg \cite{Rosenberg} originally proved this for $C^2$ foliations; Gabai \cite{Gabai90} extended it to essential laminations using \cite[Theorem 3.1]{Imanishi74}. Consequently, the theorem holds for $C^0$ foliations.  

An \emph{$I$-bundle} is a fiber bundle with interval fibers—locally homeomorphic to a product of a base set and an interval. When a lamination $\mathcal{L}$ has no compact leaves, each closed complementary region decomposes uniquely (up to isotopy) along annuli into a compact \emph{gut} and finitely many \emph{interstitial regions}. These interstitial regions are $I$-bundles over noncompact subsets of boundary leaves and can be made arbitrarily thin away from the gut, see for instance \cite{HectorHirsch,CC00I,Calegari07book}.  


\section{Central integrability}\label{section3}

In this section, we consider a closed $n$-dimensional Riemannian manifold $M^n$ ($n \geq 3$) and a partially hyperbolic diffeomorphism $f: M^n \to M^n$ with one-dimensional center bundle $E^c$. We will study the integrability of $E^c$ and characterize all accessibility classes under the absence of periodic points.  

We assume $M^n$ and $E^c$ are orientable with orientations preserved by $f$. This is achievable by passing to a finite cover and iterate if necessary. There is no loss of generality since unique integrability is local, and accessibility persists under finite lifts and iterates. This section primarily establishes the proof of Theorem \ref{integrability}.  

Throughout, we fix an orientation on $E^c$. When $E^c$ is not uniquely integrable at a point, ``center curves" refers to distinct integral curves sharing this orientation unless otherwise specified.  

Let us introduce a lemma followed by continuity and transversality of the invariant bundles $E^s$, $E^c$, and $E^u$.

\begin{lem}\label{lem1}\cite[Lemma 3.6]{2006Some}
	For $\epsilon >0$ there exists $\delta >0$ such that if $d(x,y)<\delta$ and $z\in W^{c}_{\delta}(x)$, then $W^{c}_{loc}(y) \cap W^{s}_{\epsilon}(W^{u}_{\epsilon}(z)) \neq \emptyset$, regardless of the choice of center curves through x and y. Moreover, for any small $\delta>0$, there exists $\rho>0$ such that if $d(x, y)<\rho$, then we have $W^c_{\delta}(x)\cap W^s_{loc}(W^u_{loc}(y))\neq \emptyset$ for any choice of center curves through $x$.
\end{lem}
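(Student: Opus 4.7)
The plan is to exploit the uniform transversality between $E^c$ and the codimension-one continuous distribution $E^s \oplus E^u$, combined with the continuous dependence of local stable and unstable manifolds on the base point. For each $q \in M$, write $\Sigma_q^\epsilon := W^s_\epsilon(W^u_\epsilon(q))$; this is a $C^0$ embedded codimension-one disk through $q$, tangent to $E^s \oplus E^u$, varying continuously in $q$ in the Hausdorff topology. By compactness of $M$ and the continuity of the splitting, there is a uniform lower bound $\alpha > 0$ on the angle between $E^c$ and $E^s \oplus E^u$. Consequently, any $C^1$ integral curve of $E^c$ crosses every disk $\Sigma_q^\epsilon$ transversally with a uniform speed of separation, and this transversality depends only on the line bundle $E^c$, not on the specific integral curve chosen through a point.

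For the first assertion, fix a center curve $\gamma_x$ through $x$ with $z \in W^c_\delta(x) \subset \gamma_x$, and any center curve $\gamma_y$ through $y$. The disk $\Sigma_z^\epsilon$ passes through $z$, and by the uniform transversality $\gamma_x$ separates locally at $z$ into two sub-arcs lying on opposite sides of $\Sigma_z^\epsilon$. I would then show that for $\delta$ chosen sufficiently small (depending only on $\epsilon$ and on the uniform constants above), whenever $d(x,y) < \delta$ a short sub-arc of $\gamma_y$ centered at $y$ also has its two endpoints on opposite sides of $\Sigma_z^\epsilon$; this uses the Hausdorff continuity of $\Sigma_q^\epsilon$ in $q$ combined with the uniform angle bound $\alpha$. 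An intermediate-value argument on the continuous function measuring the signed distance from $\gamma_y$ to $\Sigma_z^\epsilon$ then produces an intersection point of $\gamma_y$ with $\Sigma_z^\epsilon$ lying inside $W^c_{\mathrm{loc}}(y)$.

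The \emph{moreover} clause is essentially symmetric: given $\delta > 0$, one observes that for $y$ close enough to $x$, say $d(x,y) < \rho$, the disk $\Sigma_y^{\mathrm{loc}} = W^s_{\mathrm{loc}}(W^u_{\mathrm{loc}}(y))$ is $C^0$-close to $\Sigma_x^{\mathrm{loc}}$, while any center curve through $x$ crosses $\Sigma_x^{\mathrm{loc}}$ transversally at $x$ with speed bounded below by $\sin \alpha$. Choosing $\rho$ small enough compared to $\delta \sin \alpha$ and the modulus of continuity of $q \mapsto \Sigma_q^{\mathrm{loc}}$ guarantees that the same center curve crosses $\Sigma_y^{\mathrm{loc}}$ at some point within $W^c_\delta(x)$.

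The main obstacle is the possible non-uniqueness of center curves, reflected in the clause ``regardless of the choice of center curves''. One must verify that a single $\delta$ (respectively $\rho$) works for every choice of $\gamma_x$ and $\gamma_y$. This is handled by noting that every quantitative input---the angle bound $\alpha$, the modulus of continuity of $\Sigma_q^\epsilon$ in $q$, and the local $C^1$ regularity of any integral curve of the continuous line bundle $E^c$---is uniform across $M$ and insensitive to which $C^1$ integral curve of $E^c$ is selected at each base point. Hence the constants $\delta$ and $\rho$ can be extracted independently of these choices.
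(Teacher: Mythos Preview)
The paper does not supply its own proof of this lemma: it is quoted directly from \cite[Lemma 3.6]{2006Some}, and the only commentary is the sentence preceding it, to the effect that the lemma follows from the continuity and transversality of the invariant bundles $E^s$, $E^c$, $E^u$. Your outline is precisely an elaboration of that remark and is correct in substance.

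One point deserving slightly more care is the intermediate-value step. The disks $\Sigma_q^\epsilon$ are only $C^0$ embedded (the $su$-holonomies are merely H\"older in general), so the phrase ``signed distance to $\Sigma_z^\epsilon$'' should be read topologically rather than differentiably: a small embedded codimension-one disk locally separates a tubular neighborhood into two components, and once you have arranged that the endpoints of a short sub-arc of $\gamma_y$ lie in distinct components, connectedness forces an intersection. With that reformulation your argument goes through. Your handling of the non-uniqueness issue is exactly right: every quantitative input---the angle bound, the modulus of continuity of $q\mapsto\Sigma_q^\epsilon$, and the $C^1$ bounds on integral curves of $E^c$---depends only on the continuous line field $E^c$ and not on which integral curve is selected, so $\delta$ and $\rho$ are uniform.
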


When the center bundle $E^c$ is not uniquely integrable at some point $x$, there exist two distinct center arcs $c_1$ and $c_2$ through $x$. One can see that there are two distinct center arcs lying in either the same local center-stable manifold or the same local center-unstable manifold through $x$. Indeed, suppose that $c_1$ and $c_2$ belonged to different local center-stable and center-unstable manifolds. Then, by transversality, the local stable manifold of $c_1$ intersects the local unstable manifold of $c_2$, yielding a third distinct center arc through $x$ sharing center-stable manifold with $c_1$ and center-unstable manifold with $c_2$.

In the subsequent, when $E^c$ is not uniquely integrable at $x$, we will always assume that both center arcs through $x$ lie in either the same center-stable or the same center-unstable manifold. More precisely, when $E^s\oplus E^c$ (resp. $E^c\oplus E^u$) is not uniquely integrable at $x$, we assume that both center arcs through $x$ lie in the same center-unstable (resp. center-stable) manifold. We will show the unique integrability of $E^c$ through proving unique integrability of $E^s\oplus E^c$ and $E^c\oplus E^u$.

With this observation, one have the following lemma:

\begin{lem}\label{lem2}\cite[Remark 3.7]{2006Some}
	If $E^s\oplus E^{c}$ is non-uniquely integrable at x, then for sufficiently small $\delta >0$, and for each connected center subsegment containing x, say c, in one of the two separatrices, there is $N>0$ for which $f^{n}(c) \not\subset B_{\delta}(f^{n}(x))$ for all $n\geq N$. Analogous statement holds for $f^{-1}$ when $E^c\oplus E^u$ is not uniquely integrable.
\end{lem}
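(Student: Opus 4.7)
The plan is to argue by contradiction: assume $f^n(c)\subset B_\delta(f^n(x))$ for all $n\geq N$ and exploit the exponential unstable expansion against the second center arc produced by non-unique integrability. By the convention preceding the lemma, non-unique integrability of $E^s\oplus E^c$ at $x$ gives two distinct center arcs $c_1,c_2$ through $x$ lying in the same local center-unstable manifold $W^{cu}_{\mathrm{loc}}(x)$. Without loss of generality, $c\subset c_1$, the other separatrix being handled symmetrically after relabeling.

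Inside $W^{cu}_{\mathrm{loc}}(x)$ the arcs $c_1$ and $c_2$ are both tangent to the one-dimensional $E^c$, which is transverse to the unstable foliation restricted to this cu-manifold. Pick $y\in c_2\setminus\{x\}$ close enough to $x$ that the local unstable leaf $W^u_{\mathrm{loc}}(y)$ meets $c$ in a unique point $z$, and set $r:=d^u(y,z)>0$. The unstable foliation is $f$-invariant and uniformly expanded, so
\begin{equation*}
d^u\!\bigl(f^n(y),f^n(z)\bigr)\geq \lambda^n r
\end{equation*}
for some $\lambda>1$, while the contradiction hypothesis forces $f^n(z)\in f^n(c)\subset B_\delta(f^n(x))$. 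Hence $f^n(y)$ lies on a long unstable arc starting inside the small ball around $f^n(x)$.

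To close the argument, one applies Lemma~\ref{lem1} to the pair $(f^n(x),f^n(z))$: for $\epsilon\ll\delta$ with $\delta$ chosen small accordingly, the lemma gives a point $w_n$ on the local center curve $f^n(c_2)$ through $f^n(x)$ that lies in the $\epsilon$-size $su$-box around $f^n(z)$. Identifying $w_n$ with $f^n(y)$ forces $d^u(f^n(y),f^n(z))\leq\epsilon$, contradicting the lower bound $\lambda^n r$ for large $n$. The main obstacle is this identification, which requires distinguishing $f^n(y)$ from other possible intersections of $f^n(c_2)$ with the $su$-box; it relies on the local product structure in $W^{cu}_{\mathrm{loc}}(f^n(x))$, the transversality of $c_2$ to the unstable foliation within this cu-manifold, and the uniqueness of the intersection $W^u_{\mathrm{loc}}(f^n(z))\cap f^n(c_2)$ near $f^n(z)$. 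The subsegment $c$ may need to be shrunk so that $z$ lies close enough to $x$ for this local holonomy to be well-defined. The analogous statement for $f^{-1}$ under non-unique integrability of $E^c\oplus E^u$ follows by interchanging $E^s\leftrightarrow E^u$, $f\leftrightarrow f^{-1}$, and cs- vs. cu-plaques throughout.
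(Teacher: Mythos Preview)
The paper does not prove this lemma; it is quoted from \cite[Remark~3.7]{2006Some} without argument, so there is no proof in the paper to compare against.

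Your approach is the intended one, and the difficulty is exactly where you locate it, but your resolution of the ``main obstacle'' is circular as written. You need $w_n=f^n(y)$ and justify this via ``uniqueness of the intersection $W^u_{\mathrm{loc}}(f^n(z))\cap f^n(c_2)$ near $f^n(z)$''; but your own lower bound $d^u(f^n(y),f^n(z))\geq\lambda^n r$ says precisely that $f^n(y)$ is \emph{not} near $f^n(z)$, so local uniqueness there tells you nothing about $f^n(y)$---you are assuming the conclusion. The correct argument is global within the iterated plaque: let $P$ be the small $cu$-disk containing $c$, $c_2$, and the unstable plaque $U:=W^u_{\mathrm{loc}}(y)\ni z$. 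In $P$ the unstable plaques give a transverse coordinate along which the center arc $c_2$ is monotone, so $c_2\cap U=\{y\}$ is a single point; applying the diffeomorphism $f^n$ gives $f^n(c_2)\cap f^n(U)=\{f^n(y)\}$ inside $f^n(P)$. Since the unstable direction expands, $W^u_\epsilon(f^n(z))\subset f^n(U)$ for $n$ large, hence $w_n\in f^n(c_2)\cap f^n(U)=\{f^n(y)\}$ and the contradiction follows. Two minor points: the correct negation of the conclusion is only that $f^{n_k}(c)\subset B_\delta(f^{n_k}(x))$ along some subsequence $n_k\to\infty$, not for all $n$; and ``$c\subset c_1$ without loss of generality'' should really be read as ``given the arc $c$, choose the other branch $c_2\neq c$ in the same $cu$-plaque'', which the convention preceding the lemma guarantees is possible when $E^s\oplus E^c$ fails unique integrability.
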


Our strategy in providing unique integrability of the center bundle under non-accessible condition is analyze points across different accessibility classes. 
As established in Proposition \ref{jointint}, the set $\Gamma(f)$ forms a compact invariant lamination of codimension one, consisting precisely of non-open accessibility classes. We will analyze the case where $\Gamma(f)$ is non-empty and strictly contained in $M$, treating it as a proper invariant lamination of codimension one.

\subsection{Lamination with a compact leaf}

We require the following variant of the Anosov closing lemma; see for instance \cite[Lemma 3.8]{Bowen_book} for a proof.  

\begin{lem}(Anosov Closing Lemma)\label{closinglemma}  
    There exists $\epsilon_0 > 0$ such that for any $x \in \Gamma(f)$, if $f^k\left(W^s_{\epsilon_0}(W^u_{\epsilon_0}(x))\right) \cap W^s_{\epsilon_0}(W^u_{\epsilon_0}(x)) \neq \emptyset$ for some $k > 0$, then $W^s_{\epsilon_0}(W^u_{\epsilon_0}(x))$ contains a periodic point.  
\end{lem}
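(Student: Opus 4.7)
The plan is to transfer the setup into the accessibility class $AC(x)$, on which the joint integrability of $E^s$ and $E^u$ (Proposition \ref{jointint}) yields a genuine local product structure, and then run a graph-transform argument in the style of Bowen. Since $x\in\Gamma(f)$, joint integrability implies $W^s_{\epsilon_0}(W^u_{\epsilon_0}(x)) = W^u_{\epsilon_0}(W^s_{\epsilon_0}(x))$, and this set is a plaque of $AC(x)$ with a bracket operation $[s,u] := W^u_{\mathrm{loc}}(s)\cap W^s_{\mathrm{loc}}(u)$ realizing it as the image of a product chart $W^s_{\epsilon_0}(x)\times W^u_{\epsilon_0}(x)$. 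I will first fix $\epsilon_0>0$ small enough that, uniformly on the compact lamination $\Gamma(f)$, the bracket is well-defined and continuous; I also require $\epsilon_0$ to be small compared with the adapted contraction and expansion rates of $Df$ along $E^s$ and $E^u$.

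Next, given $y$ with $y,f^k(y)\in W^s_{\epsilon_0}(W^u_{\epsilon_0}(x))$, I will construct a contraction on a product box whose fixed point is the desired periodic orbit. For $p=[s,u]$ lying in an appropriate sub-box, I write $f^k(p) = [\Phi^s(s,u),\Phi^u(s,u)]$ whenever the image sits inside the chart. The partial map $\Phi^s(\cdot,u)$ is a uniform contraction in the $s$-variable because $f^k$ contracts $E^s$; dually, $\Phi^u(s,\cdot)$ is a uniform expansion in $u$, so the partial inverse $(\Phi^u)^{-1}(s,\cdot)$ is a contraction. Composing, the map
\[
T(s,u) := \bigl(\Phi^s(s,u),\,(\Phi^u)^{-1}(s,u)\bigr)
\]
is a contraction in the product metric once $\epsilon_0$ is chosen small enough. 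Applying Banach's fixed point theorem produces a unique $(s^*,u^*)$ with $T(s^*,u^*)=(s^*,u^*)$, which unpacks to $f^k([s^*,u^*])=[s^*,u^*]$, exhibiting a period-$k$ point inside $W^s_{\epsilon_0}(W^u_{\epsilon_0}(x))$.

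The main obstacle is to show that $T$ maps a concrete closed sub-box into itself, rather than pushing iterates out of the local product chart. The key use of the hypothesis is that both $y$ and $f^k(y)$ lie in the $\epsilon_0$-box, which, combined with uniform continuity of the product structure on the compact set $\Gamma(f)$ and the uniform hyperbolicity of $Df$ along $E^s\oplus E^u$, bounds how far $f^k(p)$ can deviate from $x$ for $p$ near $y$. Once the self-mapping property is verified, the rest of the argument proceeds exactly as in the classical proof of the Anosov closing lemma recalled in \cite[Lemma 3.8]{Bowen_book}.
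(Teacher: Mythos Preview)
The paper does not give its own proof of this lemma; it merely refers the reader to \cite[Lemma 3.8]{Bowen_book}. Your outline is a faithful sketch of exactly that classical contraction-mapping argument, transplanted into the $su$-plaque through $x$ using the local product structure furnished by joint integrability on $\Gamma(f)$ (Proposition~\ref{jointint}).

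One small point you are using tacitly and might make explicit before setting up $T$: the hypothesis forces the accessibility class $AC(x)$ to be $f^k$-invariant, since any witness $y$ and its image $f^k(y)$ both lie in $W^s_{\epsilon_0}(W^u_{\epsilon_0}(x))\subset AC(x)$, whence $f^k(AC(x))=AC(f^k(y))=AC(x)$. This is what guarantees that $f^k(p)$ lands in the \emph{same} $su$-leaf, so that writing $f^k(p)=[\Phi^s(s,u),\Phi^u(s,u)]$ in the bracket chart centered at $x$ is legitimate in the first place; the self-mapping issue you flag is then purely about controlling the size of the image, not about which leaf it lies in.
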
  

Given an orientation on $E^c$, denote by $E^c_+(x)$ and $E^c_-(x)$ the positively and negatively oriented half-lines at $x \in M$, respectively. Let $\mathcal{C}^+(x)$ and $\mathcal{C}^-(x)$ be the collections of $C^1$ integral curves of $E^c_+(x)$ and $E^c_-(x)$ passing through $x$ as a common endpoint.  

\begin{defn}  
    The bundle $E^c_+$ (resp. $E^c_-$) is \emph{uniquely integrable at $x$} if there exists $\alpha \in \mathcal{C}^+(x)$ (resp. $\mathcal{C}^-(x)$) such that every sufficiently short embedded $C^1$ curve $\sigma: [0,1] \to M$ satisfying $\dot{\sigma}(t) \in E^c_+(\sigma(t))$ (resp. $E^c_-(\sigma(t))$) and $\sigma(0) = x$ lies entirely within $\alpha$.  
\end{defn}

One can see that $E^c$ is uniquely integrable at a point if and only if both $E^c_+$ and $E^c_-$ are uniquely integrable there.

\begin{lem}\label{integ-accumulated}  
    Let $x \in M$ be an accumulation point of a sequence of plaques $\{P_i\} \subset \Gamma(f)$. If every sufficiently short curve in $\mathcal{C}^+(x)$ intersects infinitely many $P_i$, then $E^c_+$ is uniquely integrable at $x$. The analogous conclusion holds for $E^c_-$.  
\end{lem}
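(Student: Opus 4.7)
The plan is to argue by contradiction. Suppose $E^c_+$ is not uniquely integrable at $x$, so we pick two distinct arcs $\alpha_1,\alpha_2\in\mathcal{C}^+(x)$. Since each plaque $P_i\subset\Gamma(f)$ is tangent to $E^s\oplus E^u$ and the center curves are tangent to $E^c$, each $\alpha_j$ meets $P_i$ (for $i$ large) transversely at a single nearby point $y_i^j$, with $y_i^j\to x$ as $i\to\infty$. The points $y_i^1,y_i^2\in P_i$ lie in the same accessibility class, which is a leaf of $\Gamma(f)$; since $\Gamma(f)$ is closed we have $x\in\Gamma(f)$, and Proposition \ref{jointint} then yields joint integrability of $E^s$ and $E^u$ at $x$, producing short $su$-paths from $y_i^1$ to $y_i^2$ whose lengths tend to zero.

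Next I would reduce to the simpler configuration captured by the convention introduced before Lemma \ref{lem2}. Non-uniqueness of $E^c$ at $x$ forces non-unique integrability of at least one of $E^s\oplus E^c$ or $E^c\oplus E^u$ at $x$; without loss of generality assume $E^c\oplus E^u$ is non-uniquely integrable. Then $\alpha_1,\alpha_2$ may be chosen inside a common local center-stable manifold $W^{cs}_{\mathrm{loc}}(x)$, so the transverse intersection $W^{cs}_{\mathrm{loc}}(x)\cap P_i$ is tangent to $E^s$ and hence $y_i^2\in W^s_{\mathrm{loc}}(y_i^1)$ for all large $i$, i.e.\ $y_i^1$ and $y_i^2$ share a common local stable arc of length tending to zero.

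The main step is a closing-lemma argument via backward iteration. Since stable leaves are expanded by $f^{-1}$, there exists $\lambda>0$ with
\[
d^s\!\bigl(f^{-n}(y_i^1),f^{-n}(y_i^2)\bigr)\ge e^{\lambda n}\, d^s(y_i^1,y_i^2),
\]
so choosing $n(i)$ to be the smallest integer for which this stable distance first attains the constant $\epsilon_0$ of Lemma \ref{closinglemma} forces $n(i)\to\infty$. Meanwhile, by Lemma \ref{lem2} applied to $f^{-1}$, the backward iterates of each center subsegment of $\alpha_j$ through $x$ eventually escape every fixed small ball around $f^{-n}(x)$. Passing to a subsequence via compactness of $M$, I would obtain limits $z^j:=\lim_i f^{-n(i)}(y_i^j)\in\Gamma(f)$ at definite stable distance and arrange that the high forward iterates $f^{n(i)}$ carry $z^j$ back into an arbitrarily small neighborhood of $x$. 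Combining this near-return with the local product structure at $x$, a suitable pair of orbits enters a common $su$-box, and Lemma \ref{closinglemma} produces a periodic point of $f$ inside $\Gamma(f)$, contradicting the hypothesis that $f$ has no periodic points. The symmetric case, when $E^s\oplus E^c$ is non-uniquely integrable at $x$, is handled by iterating $f$ forward instead of backward, swapping the roles of $E^s$ and $E^u$.

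The hard part will be making this last return step rigorous: one has to simultaneously control three scales, namely the shrinking separation $y_i^j\to x$, the exponential stable expansion under $f^{-1}$, and the escape rate provided by Lemma \ref{lem2}, and to show that these conspire to deliver a genuine near-return captured by a local product neighborhood. The delicate point is verifying that the triple $\bigl(f^{-n(i)}(y_i^1),\,f^{-n(i)}(y_i^2),\,f^{-n(i)}(x)\bigr)$ admits a subsequential limit configuration in which Lemma \ref{closinglemma} applies, producing the desired periodic orbit in $\Gamma(f)$.
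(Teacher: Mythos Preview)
Your approach departs substantially from the paper's, and the step you flag as ``hard'' is a genuine gap rather than a technicality awaiting details.

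The paper never compares two branching center arcs at all. It works on the one-dimensional \emph{center transversal}: pick $z\in\omega(x)$ and, by compactness plus aperiodicity, find three forward iterates of $x$ landing in a single small ball with their $\Gamma(f)$-plaques ordered $z_1<z_3<z_2$ along the center direction. A \emph{single} short arc $c_i\in\mathcal{C}^+(f^{n_i}(x))$ with both endpoints in $\Gamma(f)$ (this is exactly where the accumulation hypothesis on the $P_i$ is used) is iterated forward until, by Lemma~\ref{lem2}, its length exceeds $\delta$. Projecting to the transversal, one of the maps $f^{n_1-n_3}$ or $f^{n_2-n_3}$ then becomes a contraction of a closed interval whose endpoints correspond to $\Gamma(f)$-plaques; its fixed point lies in $\Gamma(f)$ by compactness and is an $su$-plaque meeting its own iterate, so Lemma~\ref{closinglemma} produces the periodic point.

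Your configuration, by contrast, does not manufacture the input of Lemma~\ref{closinglemma}, namely a local $su$-box meeting its own iterate. After backward iteration you have $f^{-n(i)}(y_i^1)$ and $f^{-n(i)}(y_i^2)$ on the \emph{same} $\Gamma(f)$-leaf at stable distance $\approx\epsilon_0$; this is a separation along a stable leaf, not a recurrence in the transverse direction. Passing to subsequential limits $z^j$ loses all orbit information: $z^j$ is not an iterate of any $y_i^j$, and $f^{n(i)}(z^j)$ is completely uncontrolled since $f^{n(i)}$ may expand wildly near $z^j$; the identity $f^{n(i)}\bigl(f^{-n(i)}(y_i^j)\bigr)=y_i^j$ is tautological and does not pass to the limit. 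Nothing in your setup forces two iterates of a single point into a common $su$-box. The missing ingredient is precisely a one-dimensional fixed-point argument on the center transversal, which the paper's three-point ordering $z_1<z_3<z_2$ is designed to deliver.
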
  
\begin{proof}
    Obviously, the point $x$ is contained in $\Gamma(f)$ due to the compactness of $\Gamma(f)$ (see Proposition~\ref{jointint}). Suppose that $E^c_+$ is not uniquely integrable at $x$. Take a point $z\in \omega(x)$, then the iterations of $z$ under $f$ are all distinct points in $M$ since $f$ has no periodic point. By the compactness of $M$, there is a small open set $U$ of diameter $\delta/2$, for a small $\delta$ chosen as in Lemma \ref{lem1}, containing three iterates of $z$, denoted by $z_1, z_2,z_3$. Notice that each pair of iterations of $z$ cannot belong to the same $su$-plaque, otherwise we have a periodic point by the Anosov Closing Lemma (Lemma \ref{closinglemma}). Let $U'$ be a $\delta$-neighborhood of $U$. As $E^c$ is one-dimensional and oriented, the $su$-plaques of $\Gamma(f)$ can be ordered in $U'$. Here, we allow all $su$-plaques of $\Gamma(f)$ to project to a disconnected set in an open interval when $U'$ contains some points in the complement of $\Gamma(f)$. For this order, we assume that $z_1<z_3<z_2$. 
	
	As $z_i\in \omega(x)$ for $i=1,2, 3$, there are $n_1, n_2\in \mathbb{N}$ such that $d(f^{n_1}(x), z_1)\leq \delta/8$ and $d(f^{n_2}(x), z_2)\leq \delta/8$. Notice that $E^c_+$ is neither uniquely integrable at $f^{n_1}(x)$ or $f^{n_2}(x)$. Let $c_i\in \mathcal{C}^+(f^{n_i}(x))$ be a center curve through $f^{n_i}(x)$ of length less than $\delta/8$ for each $i=1, 2$. By assumption, the curve $c_i$ intersects $\Gamma(f)$ in infinitely many points. Up to taking a sub-curve, we assume that both endpoints of $c_i$ are contained in $\Gamma(f)$ for $i=1, 2$. By Lemma \ref{lem2}, there is an integer $N_i\in \mathbb{N}$ such that the length of $f^n(c_i)$ is greater than $\delta$ for any $n\geq N_i$, $i=1,2$. Pick a large number $n_3\in \mathbb{N}$ such that $n_3-n_i>N_i$ and $d(f^{n_3}(x), z_3)\leq \delta/8$ for $i=1,2$. Projecting along $su$-plaques, for either $i=1$ or $2$, we obtain that the map $f^{n_i-n_3}$ acts contractively on an interval $I$ that has one endpoint corresponding to the $su$-plaque through $f^{n_3}(x)$. By the $f$-invariance of $\Gamma(f)$, both endpoints of $I$ correspond to $su$-plaques in $\Gamma(f)$. Then, there is a fixed point of $f^{n_i-n_3}$ in $I$, which corresponds to an $su$-plaque in $\Gamma(f)$ since $\Gamma(f)$ is compact. This $su$-plaque contains a point $p$ and its iteration $f^{n_i-n_3}(p)$. Thus, by Lemma \ref{closinglemma}, this yields a periodic point of $f$, which is absurd.
\end{proof}

Now, let us first consider the case where there exists a compact accessibility class.

\begin{prop}\label{integ_compactleaf}
	Assume that $f$ has no periodic points and $\emptyset \neq \Gamma(f) \neq M$. If the lamination $\Gamma(f)$ contains a compact leaf, then $E^c$ is uniquely integrable.
\end{prop}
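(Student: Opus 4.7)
The plan is to use the compact leaves of $\Gamma(f)$ to build enough recurrence to invoke Lemma \ref{integ-accumulated}, first getting unique integrability of $E^c$ on a compact $f$-invariant sublamination of $\mathcal{K}$, and then bootstrapping to all of $M$ by an Anosov-closing-lemma argument in the spirit of the proof of Lemma \ref{integ-accumulated}.

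The first step I would take is to rule out periodic compact leaves: if some compact leaf $L$ satisfied $f^k(L)=L$, then $TL=E^s|_L\oplus E^u|_L$ would be a hyperbolic splitting for $f^k|_L$, making $f^k|_L$ an Anosov diffeomorphism of a compact manifold and hence possessing infinitely many periodic points, contradicting the hypothesis. So the $f$-orbit of any compact leaf is infinite, and by Theorem \ref{Hae62} the compact sublamination $\mathcal{K}\subset\Gamma(f)$ of compact leaves is $f$-invariant with no periodic leaves.

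Next, for any $L\in\mathcal{K}$ and any $L'\in\omega(L)$, I write $L'=\lim_k f^{n_k}(L)$. For every $x\in L'$, transversality of $E^c$ to $L'$ ensures that any sufficiently short center arc in $\mathcal{C}^+(x)$ or $\mathcal{C}^-(x)$ (on the side from which the $f^{n_k}(L)$ approach $L'$) meets infinitely many of those plaques. Lemma \ref{integ-accumulated} then yields one-sided unique integrability of $E^c$ at $x$. To upgrade to two-sided, I would use that $f$ preserves the orientation of $E^c$, so one-sided unique integrability is itself $f$-invariant, and that a minimal $f$-invariant compact sublamination $\mathcal{M}\subset\mathcal{K}$ (obtained via Zorn's lemma) carries dense forward and backward orbits. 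Combining accumulation by forward iterates with accumulation by backward iterates one concludes that both $E^c_+$ and $E^c_-$ are uniquely integrable at every point of every leaf of $\mathcal{M}$.

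The main obstacle is the final step: extending unique integrability from $\mathcal{M}$ to all of $M$. My plan is to argue by contradiction. Suppose $E^c$ fails to be uniquely integrable at some $x_0\in M$; Lemma \ref{lem2} then supplies a connected center subarc at $x_0$ whose forward or backward iterates exit a fixed $\delta$-ball infinitely often. Following the scheme used in the proof of Lemma \ref{integ-accumulated}, I would pick three iterates of a limit point of $\{f^n(x_0)\}$ inside a single $\delta$-ball and, using the recurrence of $\mathcal{M}$ and the $f$-invariant transverse order on $E^c$, project along the local $su$-plaques of $\Gamma(f)$ to obtain a transversal interval whose endpoints lie on plaques of $\Gamma(f)$ and on which a suitable power of $f$ acts as a contraction. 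Its fixed point corresponds to an $su$-plaque containing two points on a common $f$-orbit, so Lemma \ref{closinglemma} produces a periodic point of $f$, the desired contradiction. The technical heart of the argument will be carrying out this projection when $x_0$ lies in the open complement $M\setminus\Gamma(f)$, where $E^s\oplus E^u$ is not jointly integrable and a transversal to $\Gamma(f)$ is not readily available; I expect to overcome this by first pushing $x_0$ toward $\mathcal{M}$ via $f$-iteration and the recurrence of the non-integrability locus furnished by Lemma \ref{lem2}.
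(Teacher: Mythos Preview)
Your proposal has a genuine gap in the final step, and the intermediate step is a detour that does not pay off.

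The extension from $\mathcal{M}$ (or even from all of $\Gamma(f)$) to points $x_0\in M\setminus\Gamma(f)$ cannot be carried out by the closing-lemma argument you outline. That argument needs a transversal interval whose endpoints correspond to $su$-plaques of $\Gamma(f)$, so that the contracting return map has a fixed plaque in $\Gamma(f)$ to which Lemma~\ref{closinglemma} applies. But if $x_0\notin\Gamma(f)$, then every iterate $f^n(x_0)$ stays in the open complement of $\Gamma(f)$, and the center arcs through $f^n(x_0)$ need not touch $\Gamma(f)$ at all, no matter how long Lemma~\ref{lem2} makes them. ``Pushing $x_0$ toward $\mathcal{M}$ via $f$-iteration'' is not available: nothing forces the $f$-orbit of a point in $M\setminus\Gamma(f)$ to accumulate on $\Gamma(f)$, let alone on $\mathcal{M}$. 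The first two steps of your plan---unique integrability on $\mathcal{M}$---are correct but do not feed into the third.

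The paper bypasses this by never trying to propagate integrability outward from a sublamination. It argues directly by contradiction at an arbitrary bad point $x$, splitting into three cases according to how short center arcs in $\mathcal{C}^+(x)$ sit relative to the compact-leaf sublamination $\Lambda=\mathcal{K}$. If some such arc lies in a connected piece $\hat R$ of $\Lambda$, then since no compact leaf is periodic the sets $f^n(\hat R)$ are pairwise disjoint, yet by Lemma~\ref{lem2} and Lemma~\ref{lem1} each $f^n(\hat R)$ for large $n$ contains a ball of fixed radius---impossible in a compact manifold. If some arc lies entirely in a complementary component $V$ of $\Lambda$, one uses that a boundary leaf $L\in\Lambda$ of $V$ has iterates accumulating on some compact leaf $L_0$; eventually $f^{n_i}(V)$ is trapped in a thin neighborhood of $L_0$, but the growing arc $f^{n_i}(c)$ must then cross two distinct iterates of $L$, contradicting $c\subset V$. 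The only remaining case is that every short arc in $\mathcal{C}^+(x)$ meets infinitely many plaques of $\Lambda$, and then Lemma~\ref{integ-accumulated} applies directly. The volume argument and the boundary-leaf accumulation argument are the ideas your proposal is missing.
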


\begin{proof}
	Under the existence of a compact leaf of $\Gamma(f)$, the set $\Lambda$ of all compact leaves forms a compact invariant lamination by Theorem \ref{Hae62}. Suppose that there are finitely many compact leaves in $\Lambda$. Then we can find a periodic one of some period $K\in \mathbb{N}$. The restriction of $f^K$ on this periodic leaf is an Anosov diffeomorphism. It turns out the existence of periodic points of $f$ and thus contradicts the assumption. 
	Then, there are infinitely many compact leaves in $\Lambda$ and no one is periodic. 
	
	Suppose that $E^c_+$ is not uniquely integrable at some point $x$. Then, $\mathcal{C}^+(x)$ consists of distinct center curves through $x$ as the only common point. Let $\delta>0$ and $\rho>0$ be two sufficiently small constants given in Lemma \ref{lem1} and Lemma \ref{lem2}. By Lemma~\ref{lem2}, there is $N\in \mathbb{N}$ such that the length of $f^n(c)$ is larger than $\delta$ for any center curve $c\in \mathcal{C}^+(x)$ and any $n\geq N$ or $n\leq -N$. We assume the case $n\geq N$ and an analogous argument applies directly for the other. Suppose that there exists a curve $c\in \mathcal{C}^+(x)$ entirely contained in a connected compact region $R\subset\Lambda$. Notice that $\Lambda$ is a compact sublamination. Denote by $\hat{R}\subset \Lambda$ the maximal connected region containing $R$, which has several compact boundary leaves. As no leaf in $\Lambda$ is periodic, the regions $f^{i}(\hat{R})$ and $f^{j}(\hat{R})$ are disjoint for any $i\neq j\in \mathbb{Z}$. However, for each $n\geq N$, $f^n(\hat{R})$ contains a center curve $f^n(c)$ of length larger than $\delta$. Then, by Lemma \ref{lem1}, $f^n(\hat{R})$ contains a ball of radius bounded from below by $\rho$ for every $n\geq N$. This contradicts the compactness of the manifold $M$. 
	
	Suppose that there exists a center curve $c\in\mathcal{C}^+(x)$ entirely contained in the complement of $\Lambda$ or intersecting $\Lambda$ in the unique point $x$ (which happens if $x\in \Lambda$). Denote by $V$ the maximal connected component of the complement $M\setminus\Lambda$ containing $c$, and $L\in \Lambda$ one of its boundary leaves. As $\Lambda$ is compact, it contains a compact leaf $L_0\in \Lambda$ accumulated by $f^{n_i}(L)$. There is an integer $m\in \mathbb{N}$ such that $f^{n_i}(L)$ is contained in the $\rho$-neighborhood $U_{\rho}$ of $L_0$ for each $i\geq m$. It implies that $f^{n_i}(V)$ belongs to $U_\rho$ for each $i\geq m$. Using Lemma~\ref{lem1} and Lemma~\ref{lem2}, there are integers $j\neq i\geq m$ with $n_j\neq n_i\geq N$ such that $f^{n_i}(c)$ intersects both $f^{n_i}(L)$ and $f^{n_j}(L)$, which are two distinct leaves in $\Lambda$. By the $f$-invariance of $\Lambda$, we obtain that $c$ intersects $\Lambda$ in two distinct points, arriving at a contradiction.

    Now, the only remaining case is that $x$ is accumulated by a sequence of leaves $L_i\in\Lambda$ and any curve in $\mathcal{C}^+(x)$ intersects both the sequence $L_i$ and $M\setminus\Lambda$. In particular, one can deduce that $x\in \Lambda$ by the compactness of $\Lambda$. Using Lemma~\ref{integ-accumulated}, we obtain the unique integrability of $E^c_+$ at $x$. The same argument applies analogously for the unique integrability of $E^c_-$ at $x$. Therefore, we obtain that $E^c$ is uniquely integrable at any point $x$ and thus finish the proof.
\end{proof}

In the three-dimensional case, any compact leaf of $\Gamma(f)$ must be a $2$-torus. This follows because leaves of $\Gamma(f)$ are foliated by the one-dimensional unstable foliation, and the torus is the only compact orientable surface admitting such a foliation. When $\Gamma(f)$ contains a compact accessibility class, Theorem \ref{maptori} implies that the orientable closed 3-manifold $M$ must be one of the following:  
(i) the 3-torus $\mathbb{T}^3$;  
(ii) the mapping torus of $-id: \mathbb{T}^2 \to \mathbb{T}^2$;  
(iii) the mapping torus of a hyperbolic automorphism of $\mathbb{T}^2$.  
Equivalently, $M$ is a mapping torus of a torus automorphism commuting with some hyperbolic automorphism.

\subsection{Integrability of the boundary leaves}\label{section_integ_boundary}

From now on, we assume that $\Gamma(f)$ is a non-empty compact proper lamination consisting entirely of non-compact leaves. As established in Section~\ref{Preliminaries}, each closed complementary region decomposes uniquely (up to isotopy) along annuli into a compact gut region and several connected non-compact interstitial regions. Each interstitial region is an $I$-bundle over a boundary leaf and can be made arbitrarily thin away from the gut.  

The following lemma is crucial for our proof:  

\begin{lem}\label{visit_inters_infty}  
    For any $x \in M$, if both the forward orbit $\{f^n(x)\}_{n\geq0}$ and backward orbit $\{f^n(x)\}_{n\leq0}$ intersect the interstitial regions infinitely often, then $E^c$ is uniquely integrable at $x$.  
\end{lem}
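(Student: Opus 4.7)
The plan is to argue by contradiction and then apply Lemma \ref{integ-accumulated}. Suppose $E^c$ is not uniquely integrable at $x$. By the discussion preceding Lemma \ref{lem2}, one of $E^s \oplus E^c$ or $E^c \oplus E^u$ must fail to be uniquely integrable at $x$; I treat the former case, using the forward orbit hypothesis. The latter case is symmetric via the backward analogue of Lemma \ref{lem2} together with the backward orbit hypothesis.

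By Lemma \ref{lem2}, there is a $\delta > 0$ so that for any sufficiently short center subsegment $c$ through $x$ (in either separatrix $\mathcal{C}^+(x)$ or $\mathcal{C}^-(x)$), one has $f^n(c) \not\subset B_\delta(f^n(x))$ for all $n \geq N(c)$. Refine the gut--interstitial decomposition of each closed complementary region so that every $I$-fiber of the interstitial regions has length bounded by a prescribed $\epsilon \ll \delta$; this is possible because the interstitial regions can be made arbitrarily thin away from the gut. Fix also a uniform lower bound $\theta_0 > 0$ for the angle between $E^c$ and $E^s \oplus E^u$, which exists by compactness of $M$ and continuity of the invariant bundles.

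Given a small $\eta > 0$, let $c_\eta$ be the subsegment of $c$ of arc length $\eta$ issuing from $x$. Using the forward-orbit hypothesis and Lemma \ref{lem2} applied to $c_\eta$, choose $n \geq N(c_\eta)$ with $f^n(x)$ in the refined interstitial region. Then $f^n(x)$ is within $\epsilon$ of $\Gamma(f)$, and the center curve $f^n(c_\eta)$, of diameter at least $\delta$ and tangent to $E^c$, must exit the thin $I$-bundle through a boundary leaf of $\Gamma(f)$ within arc length at most $\epsilon/\sin \theta_0 < \delta$, since the $I$-fiber coordinate along any center curve changes at rate at least $\sin \theta_0$. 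Let $q$ denote this crossing, belonging to some plaque $P \subset \Gamma(f)$. Then $f^{-n}(q) \in c_\eta$ lies in the plaque $f^{-n}(P) \subset \Gamma(f)$ at arc length at most $\eta$ from $x$ along $c$.

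Letting $\eta \to 0$ produces a sequence of plaques of $\Gamma(f)$, each meeting $c$ at a point of distance at most $\eta$ from $x$. Since $E^c$ is transverse to $E^s \oplus E^u$, each plaque meets the compact curve $c$ in only finitely many points, so infinitely many intersection points approaching $x$ must come from infinitely many distinct plaques. Hence $x$ is accumulated by plaques of $\Gamma(f)$, and each sufficiently short curve in $\mathcal{C}^+(x)$ (and similarly $\mathcal{C}^-(x)$, by the two-sided conclusion of Lemma \ref{lem2}) meets infinitely many of them. Lemma \ref{integ-accumulated} then yields unique integrability of both $E^c_+$ and $E^c_-$ at $x$, hence of $E^c$, contradicting the starting assumption. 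I expect the main technical obstacle to be the transverse-geometry argument that $f^n(c_\eta)$ exits through a boundary leaf within the claimed arc-length bound: this depends on choosing the refinement finely enough relative to $\theta_0$, and on ensuring the forward orbit still visits the thinner interstitial regions infinitely often, which likely requires exploiting the absence of periodic points via a compactness/closing argument along the orbit's accumulation set.
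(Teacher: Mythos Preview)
Your argument is sound in outline but takes a genuinely different route from the paper. The paper proceeds by a \emph{double} application of Lemma~\ref{lem2} and never invokes Lemma~\ref{integ-accumulated}. After fixing the thin decomposition, the paper iterates a center curve $\gamma$ through $x$ until some $f^i(x)$ lies in an interstitial region, and there extracts the subsegment $\alpha\subset f^i(\gamma)$ whose two endpoints already lie on the boundary leaves of $\Gamma(f)$ (this is exactly the transverse-exit step you flag). Since non-unique integrability persists at $f^i(x)$, a second application of Lemma~\ref{lem2} forces $f^j(\alpha)$ to have length $>\delta$ for some $j$ with $f^{i+j}(x)$ again interstitial. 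By invariance of $\Gamma(f)$ the endpoints of $f^j(\alpha)$ remain in $\Gamma(f)$ while its interior stays in the complement; transversality at the thin interstitial region then pins both endpoints within arc length $\leq\delta/2$ of $f^{i+j}(x)$, an immediate length contradiction.

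Your route---one growth step, then harvest a $\Gamma(f)$-crossing on each short $c_\eta$, then feed the accumulating plaques into Lemma~\ref{integ-accumulated}---also reaches a contradiction, but it is less economical: Lemma~\ref{integ-accumulated} already packages the closing lemma and the no-periodic-points hypothesis, whereas the paper's proof of Lemma~\ref{visit_inters_infty} uses neither. (Incidentally, you only need unique integrability of the single separatrix where the branching occurs, not both $E^c_+$ and $E^c_-$.) Regarding your two flagged worries: the transverse-exit claim is precisely what the paper's thinness choice---``any center curve within them connects two boundary leaves, and every center segment between boundary leaves has length at most $\delta/2$''---is engineered to guarantee, and it follows from Lemma~\ref{lem1} rather than from any metric control on the $I$-fibers; and the refinement concern is not a separate obstacle, since the paper simply fixes the thin decomposition at the outset and states the hypothesis relative to that fixed decomposition (the applications in Propositions~\ref{integ-boundary} and~\ref{integ-gut} verify it for that choice).
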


\begin{proof}
Choose interstitial regions sufficiently thin such that any center curve within them connects two boundary leaves, and every center segment between boundary leaves has length at most $\delta/2$, where $\delta > 0$ is the constant from Lemma \ref{lem2}. This is achievable because $E^c$ is transverse to $E^s \oplus E^u$, and interstitial regions can be made arbitrarily thin.

Suppose $E^s\oplus E^c$ is not uniquely integrable at some point $x$, and for every integer $n \in \mathbb{N}$, there exists $k > n$ such that $f^k(x)$ lies in an interstitial region. Fix a center curve $\gamma(x)$ through $x$. By Lemma \ref{lem2}, there exists $N > 0$ such that the length of $f^k(\gamma(x))$ is greater than $\delta$ for all $k > N$. Without loss of generality, assume $f^i(x)$ resides in an interstitial region for all $i > N$.

For each $i > N$, let $\alpha(f^i(x))$ be a connected subsegment of $f^i(\gamma(x))$ containing $f^i(x)$ and entirely contained within its interstitial region. Since non-unique integrability persists at $f^i(x)$, Lemma \ref{lem2} yields $N_i' > 0$ such that the length of $f^{k'}(\alpha(f^i(x)))$ is greater than $\delta$ for all $k' > N_i'$. Select $j > N_i'$ such that $f^{i+j}(x)$ also lies in an interstitial region. By $f$-invariance of the lamination decomposition, $f^j(\alpha(f^i(x)))$ remains entirely within an interstitial region. However, this curve simultaneously satisfies that the length is greater than $\delta$ yet must be at most $\delta/2$ by our thinness choice - a contradiction.

Thus, we conclude that $E^s\oplus E^c$ is uniquely integrable at $x$. The same argument for $f^{-1}$ implies the unique integrability of $E^c\oplus E^u$ at $x$. Hence, $E^c$ is uniquely integrable at $x$.
\end{proof}

Note that each closed complementary region of $\Gamma(f)$ containing a gut is periodic under $f$. Indeed, by the compactness of $M$, there are finitely many guts in the complement of $\Gamma(f)$. Any non-periodic complementary region would have its orbit intersecting interstitial regions infinitely often. By Lemma~\ref{visit_inters_infty}, this would imply $E^c$ is uniquely integrable in such regions. Furthermore, boundary leaves of periodic complementary regions are themselves periodic due to $f$-invariance of $\Gamma(f)$. Thus there exists a common period $k \in \mathbb{N}$ such that $f^k$ preserves all complementary regions containing guts, and their boundary leaves. Therefore, after passing to a finite iterate, we may assume every complementary region with a gut and its boundary leaves are $f$-invariant.

We define \emph{the boundary of a gut} as the set of points in boundary leaves accumulated by gut points. Similarly, \emph{the boundary of an interstitial region} comprises points in boundary leaves accumulated by points from the given interstitial region. Each boundary leaf of a complementary region consists precisely of the boundary of guts and the boundary of interstitial regions.

The following result is indicated by \cite[Chapter V, Section 3.2]{HectorHirsch} or \cite[Proposition 5.2.14]{CC00I}.

\begin{lem}\label{suplaque}  
    Every gut boundary contains only finitely many $su$-plaques.
\end{lem}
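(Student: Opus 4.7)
The plan is to exploit the compactness of the gut together with the structural decomposition of closed complementary regions into a compact gut and annularly attached interstitial $I$-bundles, reducing the claim to a purely topological finiteness statement about a compact surface.

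First, I would invoke the standard structural theorem for complementary regions of codimension-one laminations from Hector--Hirsch (see also Candel--Conlon): inside $\hat V$ the gut $G$ is a compact $3$-submanifold, and each interstitial region is an $I$-bundle attached to $G$ along an annulus. Thus $\partial G$ decomposes as
\[
\partial G \;=\; B \;\cup\; \bigcup_{j=1}^{k} A_j,
\]
where the $A_j$ are finitely many disjoint annuli (the attaching loci of the interstitial regions) and $B:=\partial G\setminus\bigcup_j \mathrm{int}(A_j)$ is exactly the set of points of $\partial\hat V$ accumulated by interior gut points — i.e., the gut boundary in the sense of the excerpt. In particular, $B$ is a compact topological $2$-manifold whose boundary $\partial B=\bigcup_j \partial A_j$ is a finite disjoint union of circles.

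Second, since $B$ is a compact surface with boundary, it has only finitely many connected components $B_1,\dots,B_m$. Every point of $B$ lies on a boundary leaf of $\hat V$, hence in a leaf of $\Gamma(f)$; and since distinct leaves of $\Gamma(f)$ are disjoint while each $B_i$ is path-connected and tangent to $E^s\oplus E^u$, each $B_i$ is contained in a single leaf of $\Gamma(f)$. Consequently each $B_i$ is an $su$-plaque, and the gut boundary contains at most $m<\infty$ such plaques, proving the lemma.

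The only genuine obstacle is the first step: extracting from the lamination data a compact $3$-manifold $G$ whose frontier in $\partial\hat V$ is a manifold with finitely many annular attaching circles. This is precisely the content of the classical gut/interstitial decomposition for codimension-one laminations (cited in the excerpt as \cite{HectorHirsch,CC00I,Calegari07book}) and holds in our setting because $\Gamma(f)$ has no compact leaves and is realized on the closed manifold $M$; once this decomposition is in hand, the finiteness conclusion is immediate from the compactness of $B$.
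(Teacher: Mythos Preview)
Your proposal is correct and coincides with the paper's approach: the paper does not supply a proof at all but simply cites the structural results in Hector--Hirsch (Chapter~V, \S3.2) and Candel--Conlon (Proposition~5.2.14), and you have unpacked precisely that argument---compactness of the gut forces its trace on $\partial\hat V$ to be a compact piece of the boundary leaves with finitely many components. One small remark: Section~\ref{section3} treats arbitrary ambient dimension with $\dim E^c=1$, so your three-dimensional wording (``annuli'', ``compact $3$-submanifold'', ``compact surface'') should strictly be replaced by the corresponding codimension-one analogues, but the argument carries over verbatim.
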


    

We now are going to establish unique integrability of the center bundle at boundary leaves of $\Gamma(f)$. The orientation of $E^c$ splits every center curve through boundary point $x$ into two connected arcs: one in $\mathcal{C}^+(x)$ and one in $\mathcal{C}^-(x)$. These represent two distinct directional possibilities from $x$. Crucially, unique integrability at boundary points requires demonstration in both directions simultaneously, since integrability might hold in one direction but fail in the other. This directional distinction is unnecessary for interior points of $\Gamma(f)$ or complementary regions, where both directions behave identically. However, boundary points demand separate consideration of each direction.

Consider a point $x$ in a boundary leaf of $\Gamma(f)$. We define a \emph{direction associated with complementary regions} as a direction containing some center curve through $x$ that lies entirely within a complementary region. Conversely, a \emph{direction associated with the lamination} contains only center curves that necessarily intersect $\Gamma(f)$ beyond $x$. Correspondingly, we describe center curves as either \emph{in the direction of complementary regions} or \emph{in the direction of the lamination}.

The following proposition establishes unique integrability of the center bundle at boundary leaves in directions associated with complementary regions. When both directions at a boundary point connect to complementary regions, identical reasoning yields unique integrability of $E^c$ in the complementary direction. Unique integrability in lamination-associated directions will be addressed in Proposition \ref{integ-rest}.

\begin{prop}\label{integ-boundary}
    The bundle $E^c$ is uniquely integrable at every boundary leaf point in the direction associated with complementary regions.
\end{prop}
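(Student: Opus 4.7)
The plan is to argue by contradiction. Suppose $E^c$ is not uniquely integrable at $x$ in the direction of a complementary region $V$. Then, by transversality and orientability of $E^c$, there exist two distinct curves $c_1, c_2 \in \mathcal{C}^+(x)$ (without loss of generality, positive direction), and both enter $V$ because all curves in $\mathcal{C}^+(x)$ go to the same side of the boundary leaf $L$ through $x$. The reduction preceding Lemma \ref{lem2} allows me to assume $c_1, c_2$ lie in a common local center-stable manifold, so that $E^s \oplus E^c$ is non-uniquely integrable at $x$; the center-unstable case is handled symmetrically by working with $f^{-1}$. Since we have passed to a finite iterate in which both $L$ and the closed complementary region $\bar V$ are $f$-invariant, each iterate $f^n(c_i)$ lies in $\bar V$ and starts at $f^n(x) \in L$.

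The first step is to invoke Lemma \ref{lem2}: fixing $\delta>0$ sufficiently small so that Lemmas \ref{lem1} and \ref{closinglemma} apply at scale $\delta$, there exists $N$ with $\mathrm{length}(f^n(c_i)) > \delta$ for all $n \geq N$ and $i=1,2$. The second step is a geometric confinement argument exploiting the fine structure of $\bar V$. Since the interstitial regions admit $I$-bundle decompositions that can be made arbitrarily thin outside any prescribed neighborhood of the (finitely many) guts, I fix a compact collar $U$ of the guts inside $\bar V$ so that every $I$-fiber of the interstitial part outside $U$ has length strictly less than $\delta/2$. The crucial observation is that $E^c$ is transverse to $\Gamma(f)$, so any $C^1$ center curve inside $\bar V$ starting on a boundary leaf is forced to follow an $I$-fiber through the interstitial region: crossing a boundary leaf would push the curve out of $\bar V$, contradicting $f$-invariance. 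Hence the inequality $\mathrm{length}(f^n(c_i)) > \delta$ forces $f^n(c_i)$ either to enter a gut or to traverse a thick $I$-fiber, and in either case $f^n(x)$ must lie in the compact set $K := \partial(\text{guts}) \cup U$.

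With the orbit $\{f^n(x)\}_{n \geq N}$ confined to $K$ and infinite (by the absence of periodic points), it accumulates at some $z \in K$, yielding indices $n_1 < n_2$ with $d(f^{n_1}(x), f^{n_2}(x)) < \epsilon_0$, where $\epsilon_0$ is the constant in Lemma \ref{closinglemma}. Since $f^{n_1}(x) \in L \subset \Gamma(f)$, applying Lemma \ref{closinglemma} produces a periodic point of $f$ inside $W^s_{\epsilon_0}(W^u_{\epsilon_0}(f^{n_1}(x)))$, contradicting the hypothesis and establishing the proposition.

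The main obstacle will be the confinement step in the second paragraph: one must rigorously relate the geometric $I$-bundle decomposition of the interstitial regions to the dynamical behavior of iterated center curves constrained to $\bar V$, in particular ensuring that a long iterated curve emerging from a thin interstitial boundary has nowhere to go except into a gut or a thick collar. Combining the transversality of $E^c$ to the boundary leaves, the orientability assumption, and the $f$-invariance of $\bar V$ must be done carefully; Lemma \ref{suplaque} should be invoked to control the local $su$-structure in the subcase where $f^n(x)$ spends time on a gut boundary, to ensure the closing lemma can indeed be applied.
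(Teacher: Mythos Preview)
Your approach is genuinely different from the paper's, and the closing step contains a real gap. You confine the forward orbit $\{f^n(x)\}_{n\geq N}$ to a compact set $K$ and then pick two iterates $f^{n_1}(x),f^{n_2}(x)$ with $d(f^{n_1}(x),f^{n_2}(x))<\epsilon_0$. But Lemma~\ref{closinglemma} does not trigger on mere metric proximity: it requires $f^{n_2-n_1}\bigl(W^s_{\epsilon_0}(W^u_{\epsilon_0}(f^{n_1}(x)))\bigr)$ to meet $W^s_{\epsilon_0}(W^u_{\epsilon_0}(f^{n_1}(x)))$, i.e.\ the two iterates must lie on the \emph{same local $su$-plaque}. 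Since $L$ is a non-compact leaf of $\Gamma(f)$, it can accumulate on itself inside $K$, so two $\epsilon_0$-close orbit points may sit on distinct local plaques of $L$, separated in the center direction. Your set $K=\partial(\text{guts})\cup U$ generally meets $L$ in infinitely many plaques, so recurrence alone does not force two iterates onto the same plaque. You mention Lemma~\ref{suplaque} only as a ``subcase'' remedy, but finiteness of $su$-plaques is precisely what is needed to make pigeonhole plus closing work; it is not peripheral.

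The paper's proof runs in the opposite direction and avoids the confinement step entirely. First, using Lemma~\ref{suplaque} (each gut boundary consists of finitely many $su$-plaques) together with pigeonhole and Lemma~\ref{closinglemma}, one shows that the orbit of \emph{any} boundary point meets the union of all gut boundaries in only finitely many points. Since boundary leaves decompose into gut-boundary and interstitial-boundary pieces, it follows that both the forward and backward orbits of $x$ visit interstitial boundaries infinitely often. Then Lemma~\ref{visit_inters_infty} applies directly and yields unique integrability at $x$ in the complementary direction. Thus what you identify as the ``main obstacle'' (the confinement argument) is exactly the content already packaged in Lemma~\ref{visit_inters_infty}, and the step you treat as routine (closing) is where Lemma~\ref{suplaque} is indispensable.
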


\begin{proof}
    Consider first a point $x$ in the boundary of a gut. Its orbit contains at most finitely many points within that gut boundary. Suppose otherwise that infinitely many orbit points lie in the gut boundary. By Lemma~\ref{suplaque}, these occupy finitely many $su$-plaques. The pigeonhole principle implies one $su$-plaque contains two distinct orbit points. The Anosov Closing Lemma (Lemma \ref{closinglemma}) then produces a periodic point, contradicting our assumption.

    Now let $x$ be in the boundary of an interstitial region. The previous argument shows finitely many orbit points lie in any single gut boundary. Compactness of $M$ ensures finitely many guts, so altogether only finitely many orbit points occupy gut boundaries. Since boundary leaves partition into gut-boundary and interstitial-boundary components, infinitely many forward-orbit and backward-orbit points must lie in interstitial boundaries. Lemma \ref{visit_inters_infty} then establishes unique integrability in the complementary region direction.
\end{proof}

\subsection{Complementary regions and $I$-bundles}

This subsection focuses on complementary regions. We will establish unique integrability of $E^c$ at all interior points of $M \setminus \Gamma(f)$ by proving each complementary region forms an $I$-bundle over boundary leaves, foliated by uniformly bounded-length center segments. In fact, we provide a more general result showing the $I$-bundle structure for an arbitrary invariant compact (could be proper) lamination $\Lambda\subset\Gamma(f)$ without compact leaves.

By the lack of compact leaves, each closed complementary region of $\Lambda$ decomposes into a compact gut and finitely many interstitial regions. Denote by $V=M\setminus\Lambda$ the complement of $\Lambda$, $G$ the finite union of guts, and $I$ the collection of all interstitial regions. Let $\hat{V}$ be the closed complementary regions equipped with the path metric induced by the original metric. 

The following lemma first connects boundary leaves via center curves:

\begin{lem}\label{connect_two_leaves}
    For every point $x$ in a boundary leaf of $\Lambda\subset \Gamma(f)$, there exists a center curve connecting $x$ to another distinct boundary leaf.
\end{lem}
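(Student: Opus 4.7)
The plan is to follow a center curve from $x$ into the adjacent complementary region of $\Lambda$ and argue that it must exit at a boundary leaf distinct from $L$ in finite length. Since $L$ is a boundary leaf of $\Lambda$ and $E^c$ is transverse to $\Lambda$, one of the oriented half-directions $E^c_+(x)$, $E^c_-(x)$ points into an adjacent complementary region $V_0$ of $\Lambda$; assume without loss of generality that it is $E^c_+(x)$. Proposition~\ref{integ-boundary} gives a well-defined $C^1$ center arc emanating from $x$ into $V_0$ in this direction, and I let $c : [0,T) \to \hat V_0$ be a maximal $C^1$ integral curve of $E^c_+$ starting at $x$ with $c((0,T)) \subset V_0$.

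The core step is to prove $T < \infty$. I would decompose $\hat V_0$ into its compact gut $G_0$ and finitely many interstitial $I$-bundles, chosen thin enough that any center segment entirely contained in a single interstitial region has length at most $\delta/2$, where $\delta$ is the constant from Lemma~\ref{lem2}. Assume toward contradiction that $T = \infty$. By thinness, $c$ cannot accumulate infinite length inside any single interstitial region, so $c$ must spend infinite total length inside $G_0$; compactness then produces times $t_n \uparrow \infty$ with $c(t_n) \to y$ for some $y \in G_0$. Passing to a subsequence we may assume $c(t_n)$ and $c(t_m)$ ($n<m$ large) lie in a single small $su$-saturated neighbourhood of $y$. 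Combining this proximity with the $Df$-invariance of $E^c$, with Lemma~\ref{lem1} for projecting along $su$-plaques, and with the Anosov closing lemma (Lemma~\ref{closinglemma}) applied to the $su$-plaques intersected by $c$, one manufactures a periodic orbit of $f$, contradicting the hypothesis. If needed one first passes to an iterate so that $\hat V_0$ is $f$-invariant (using the preamble argument that non-periodic complementary regions force $E^c$-unique integrability through Lemma~\ref{visit_inters_infty}).

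With $T < \infty$ in hand, continuity and maximality force $c(T^-) \in \partial \hat V_0 \cap \Lambda$, lying on some boundary leaf $L'$ of $\hat V_0$. The conclusion $L' \neq L$ then follows purely from the orientation of $E^c$. The subset of $L$ on whose $E^c_+$-side $V_0$ sits is open, closed, and nonempty in the connected leaf $L$, so equals all of $L$; hence $E^c_+$ points into $V_0$ at every point of $L$. However, the curve $c$ arrives at $c(T^-)$ from inside $V_0$ along the direction $E^c_+$, which forces $E^c_+(c(T^-))$ to point \emph{out} of $V_0$. This contradicts $c(T^-) \in L$, proving $L' \neq L$, as required.

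The principal obstacle is the $T = \infty$ case: converting the spatial self-accumulation of the single non-orbit curve $c$ inside the compact gut into a genuine periodic orbit of $f$. Since $c$ is not itself an $f$-orbit, one must carefully couple its recurrence with the $f$-dynamics on the neighbouring $su$-plaques, using the $f$-invariance of $\Lambda$ and the orientation of $E^c$ to arrange a configuration to which the closing lemma actually yields a true periodic point rather than mere recurrence.
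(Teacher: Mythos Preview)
The approach diverges substantially from the paper's, and the central step---ruling out $T=\infty$---has a genuine gap. Your proposed argument is to extract from the self-accumulation of the center curve $c$ inside the compact gut a periodic orbit of $f$ via the closing lemma. But Lemma~\ref{closinglemma} requires a point $z\in\Gamma(f)$ and an iterate $f^k(z)$ lying in the same local $su$-plaque; the recurrence of $c$ is purely spatial, not dynamical, and carries no information about any $f$-orbit. The curve $c$ lives in the complement of $\Lambda$, is not an orbit of $f$, and the $su$-plaques it may cross are not related to one another by iteration. You acknowledge this obstacle in your final paragraph, but no mechanism is offered that actually converts geometric recurrence of $c$ into a dynamical return to which Lemma~\ref{closinglemma} applies. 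Passing to an iterate so that $\hat V_0$ is $f$-invariant does not help: even then, $f$ sends $c$ to a \emph{different} center curve starting at $f(x)$, not to $c$ itself.

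The paper sidesteps the gut entirely by using the dynamics of $f$ on the point $x$ rather than following a center curve geometrically. The key observation (already contained in the proof of Proposition~\ref{integ-boundary}) is that only finitely many orbit points $f^n(x)$ can lie in gut boundaries: there are finitely many guts, each gut boundary meets finitely many $su$-plaques (Lemma~\ref{suplaque}), and two orbit points in the same $su$-plaque would yield a periodic point via Lemma~\ref{closinglemma}. Hence for all large $|n|$ the point $f^n(x)$ lies in the boundary of an interstitial region, where thinness makes the conclusion immediate: any center curve through $f^n(x)$ crosses to the opposite boundary leaf. Pulling this curve back by $f^{-n}$, and using that the set of boundary leaves of $\Lambda$ is $f$-invariant, gives the required center curve through $x$. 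No analysis inside the gut is needed, and no periodicity of $\hat V_0$ is assumed.
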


\begin{proof}
    It is immediate for $x$ in an interstitial region boundary. Since interstitial regions can be made arbitrarily thin, every center curve through $x$ terminates at the opposite boundary leaf.

    Consider a point $x$ in a gut boundary. The proof of Proposition~\ref{integ-boundary} implies only finitely many points in the orbit of $x$ lie in gut boundaries. Choose $N > 0$ such that $f^n(x)$ lies in an interstitial boundary for all $|n| \geq N$. By thinness of interstitial regions, every center curve through $f^n(x)$ connects two boundary leaves of a complementary region. Applying $f^{-n}$ preserves this connectivity (as the set of boundary leaves is $f$-invariant), yielding a center curve through $x$ connecting two boundary leaves.
\end{proof}

Observe that every complementary region of $\Lambda$ has at least two boundary leaves. While general laminations may admit complementary regions with more than two boundary leaves, our dynamical setting ensures exactly two:

\begin{cor}\label{two-boundary}
    Every closed complementary region of $\Lambda$ admits precisely two boundary leaves.
\end{cor}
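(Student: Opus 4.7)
The plan is to argue by contradiction. Assume that a closed complementary region $\hat V$ of $\Lambda$ has three or more boundary leaves $L_1, L_2, L_3, \dots$; I will exploit the unique integrability established in Proposition \ref{integ-boundary} to show this is impossible.

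For each boundary leaf $L_1$ and each $x \in L_1$, Proposition \ref{integ-boundary} provides a unique maximal center arc $c(x) \subset \hat V$ based at $x$ in the direction pointing into $\hat V$, and Lemma \ref{connect_two_leaves} ensures that this arc terminates on another boundary leaf. The length of $c(x)$ is uniformly bounded: inside an interstitial region it is at most $\delta/2$ by the thinness convention, and inside the gut it is bounded by the gut's compact diameter plus the transit through adjacent interstitial regions. This defines a well-defined endpoint map $\pi \colon L_1 \to \{L_j : j \ne 1\}$, sending $x$ to the boundary leaf on which $c(x)$ terminates.

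The crucial step is that $\pi$ is locally constant. On the interstitial-boundary portions of $L_1$ this is automatic from the $I$-bundle structure: $c(x)$ coincides with the interval fiber through $x$, whose opposite endpoint lies on a fixed boundary leaf, so $\pi$ is constant on each interstitial-boundary component. On the gut-boundary portions of $L_1$, continuity of $E^c$ together with unique integrability at the initial point $x$ gives continuous dependence of $c(x)$ on $x$; transversality of center curves to boundary leaves gives continuity of the first-hitting time; and since the boundary leaves form a disjoint family of closed sets, the terminal leaf of $c(x)$ persists under small perturbations of $x$. Combining both cases, $\pi$ is locally constant on $L_1$.

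Since $L_1$ is connected and the codomain of $\pi$ is a discrete set of boundary leaves, $\pi$ is constant; denote its common value by $L_2$. Running the same argument with $L_2$ in place of $L_1$ yields a constant endpoint map with value $L_1$. So every center arc into $\hat V$ from a point of $L_1$ or $L_2$ ends on the other of these two leaves, and by Lemma \ref{connect_two_leaves} the same must hold for points of any third boundary leaf $L_3$—but then $L_3$ would send its points into $\{L_1, L_2\}$, and by the symmetric argument applied to $L_1$ we would also need points of $L_1$ reaching $L_3$, contradicting the constancy of $\pi$. I expect the main obstacle to be rigorously verifying the continuity of $\pi$ at gut-boundary points, where the ambient center bundle may fail to be uniquely integrable in the gut's interior; although Proposition \ref{integ-boundary} controls the initial direction of $c(x)$, one must rule out branching along the curve that could swap its terminal boundary leaf under small perturbations of $x$. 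Splitting into the interstitial-boundary and gut-boundary cases, and leveraging the $I$-bundle structure in the former, appears to be the cleanest route.
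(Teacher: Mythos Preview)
Your overall strategy coincides with the paper's: both exploit connectedness of a boundary leaf together with the fact that the assignment ``which other boundary leaf does my center arc reach'' is locally constant. The paper packages this as a partition of $L_1$ into open sets $A_i = \{x \in L_1 : \text{some center curve from }x\text{ reaches }L_i\}$ rather than as a map $\pi$, but the content is the same, and both arguments appeal to the impossibility of writing a connected leaf as a nontrivial disjoint union of open subsets.

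There is, however, a genuine gap in your closing step. You assert that a third boundary leaf $L_3$ ``would send its points into $\{L_1, L_2\}$'', but nothing you have established forces this: when there are four or more boundary leaves, the constant map $\pi$ on $L_3$ could take the value $L_4$, and reversal then merely pairs $L_3$ with $L_4$, leaving two disjoint pairs $(L_1,L_2)$ and $(L_3,L_4)$ with no contradiction in sight. The paper sidesteps this with a short pigeonhole: if every center curve from $L_1$ lands on a single other leaf $L'$, replace $L_1$ by $L'$ and iterate; finiteness of the boundary leaves guarantees you eventually reach a leaf from which center curves hit at least two distinct targets, and the connectivity argument is run on \emph{that} leaf. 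You should insert this replacement step rather than argue directly from the first $L_1$.

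Your concern about branching inside the gut---that Proposition~\ref{integ-boundary} only pins down the germ of $c(x)$ at $x$, not its full trajectory---is legitimate and is exactly the content of the disjointness of the $A_i$ in the paper's formulation. The paper dispatches it with the brief phrase ``pairwise disjoint by boundary leaf distinctness'' and does not elaborate further; the underlying reason is that for all large $n$ the iterate $f^n(x)$ lies in an interstitial boundary, where thinness forces every center arc into $\hat V$ to hit the single opposite boundary leaf, and $f$-invariance of $\hat V$ and of its boundary leaves transports this conclusion back to $x$. Your proposed interstitial/gut split is the right way to organise this check.
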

\begin{proof}
    Let $U\subset V$ be a path-connected complementary region of $\Lambda$. Fix a boundary leaf $L_1$. By Lemma~\ref{connect_two_leaves}, every $x \in L_1$ has a center curve to another boundary leaf of $U$. Suppose $U$ has at least three boundary leaves. If all center curves from $L_1$ terminate at a single other boundary leaf $L'$, replace $L_1$ with $L'$ and repeat. Since $U$ has finitely many boundary leaves, we eventually find a boundary leaf (still called $L_1$) containing points $x,y$ whose center curves reach two distinct boundary leaves $L_2$ and $L_3$ respectively.
    
    Let $\{L_i\}_{i=1}^k$ be all boundary leaves of $U$. Define $A_i \subset L_1$ ($i=2,\dots,k$) as the set of points with center curves connecting to $L_i$. Sets $A_2$ and $A_3$ are non-empty by construction and pairwise disjoint by boundary leaf distinctness. Lemma~\ref{connect_two_leaves} implies $\bigcup_{i=2}^k A_i = L_1$. This yields a contradiction: $L_1$ is connected but partitioned into finitely many disjoint non-empty open subsets, which is impossible for a leaf in a lamination.
\end{proof}

This corollary relies fundamentally on Lemma \ref{connect_two_leaves}. Without it, if boundary points existed lacking center curves to other leaves, complementary regions could have more boundary leaves.

\begin{lem}\label{dense}
    Given any $U\subset V$ path-connected complementary region of $\Lambda$, we define $A \subset U$ as the set of points admitting center curves connecting both boundary leaves. Then $A=U$.
\end{lem}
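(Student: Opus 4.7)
The plan is to prove that $A$ is nonempty, open in $U$, and closed in $U$; since $U$ is path-connected, this forces $A = U$. Throughout, let $L_1, L_2$ denote the two boundary leaves of $U$ given by Corollary \ref{two-boundary}.

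\textbf{Nonemptiness.} Pick any $x_0 \in L_1$. By Lemma \ref{connect_two_leaves} there exists a center curve through $x_0$ connecting to some other boundary leaf of $\Lambda$, which by Corollary \ref{two-boundary} must be $L_2$. Any interior point of this curve lies in $A$.

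\textbf{Openness.} Suppose $x \in A$ with a center curve $\gamma:[0,\ell]\to \hat U$ (parametrized by arc length) satisfying $\gamma(0)\in L_1$, $\gamma(\ell)\in L_2$, and $x=\gamma(s)$ for some $s\in(0,\ell)$. Because $L_1$ and $L_2$ are tangent to $E^s\oplus E^u$, they are uniformly transverse to the continuous one-dimensional bundle $E^c$. Peano's existence theorem, together with continuous dependence on initial data for one-dimensional continuous direction fields, produces for every $y$ near $x$ a center curve $\gamma_y$ close to $\gamma$ in $C^0$. Transversality then forces $\gamma_y$ to meet $L_1$ near $\gamma(0)$ and $L_2$ near $\gamma(\ell)$, so $y \in A$.

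\textbf{Closedness.} Take $x_n\to x\in U$ with $x_n\in A$ and center curves $\gamma_n:[0,\ell_n]\to \hat U$, $\gamma_n(s_n)=x_n$, $\gamma_n(0)\in L_1$, $\gamma_n(\ell_n)\in L_2$. The central task is to obtain a uniform bound on $s_n$ and $\ell_n-s_n$, so that Arzel\`a--Ascoli yields a subsequential $C^1$ limit through $x$ that still joins $L_1$ to $L_2$. The idea is to use the decomposition of $\hat U$ into the compact gut and finitely many interstitial regions chosen thin enough that any center arc within one has length at most $\delta/2$. Walking along $\gamma_n$ from $x_n$ toward $L_1$ (resp. $L_2$), the curve alternates between passages through interstitial regions (contributing length at most $\delta/2$ each) and excursions in the gut. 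If $s_n\to\infty$ for some subsequence, the total time spent in the gut diverges, so the sequence of center arcs would recur to arbitrarily close $su$-plaques in the gut; combined with Lemma \ref{suplaque} (finitely many $su$-plaques in the gut boundary) and the Anosov Closing Lemma \ref{closinglemma}, this would produce a periodic point, contradicting the standing hypothesis. Hence $s_n$ and symmetrically $\ell_n-s_n$ are bounded, a limit curve exists, and $x\in A$.

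The main obstacle is the length bound in the closedness step: one must carefully combine the interstitial thinness, compactness of the gut, $f$-invariance of $\Lambda$, and the Anosov closing argument to rule out center curves of unbounded length joining $L_1$ to $L_2$. The openness and nonemptiness steps are comparatively routine, relying only on transversality of $E^c$ to the boundary leaves and on Lemma \ref{connect_two_leaves}.
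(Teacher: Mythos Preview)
Your nonemptiness and openness steps match the paper's approach. The closedness step, however, has a genuine gap.

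The paper bounds the lengths $\ell_n$ by a purely topological argument: the length function $\tau:A\to\mathbb{R}^+$ (assigning to each $x\in A$ the length of a center curve through $x$ joining $L_1$ to $L_2$) is continuous, it is bounded by $\delta/2$ on the interstitial part of $A$, and the gut of $U$ is compact. Continuity of $\tau$ together with compactness of the gut then gives a uniform bound, with no dynamics needed. Once $\tau$ is bounded on $A$, Arzel\`a--Ascoli produces the limit curve exactly as you propose.

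Your proposed mechanism for the length bound does not work. You argue that if $s_n\to\infty$, the center arc $\gamma_n$ spends a long time in the gut and so ``recurs to arbitrarily close $su$-plaques,'' then invoke Lemma~\ref{suplaque} and Lemma~\ref{closinglemma} to produce a periodic point. There are two problems. First, Lemma~\ref{closinglemma} concerns returns of an $su$-plaque under \emph{iterates of $f$}; a single center curve, or a sequence of center curves indexed by $n$, involves no iteration of $f$ whatsoever, so the closing lemma has nothing to act on. Second, Lemma~\ref{suplaque} is a statement about $su$-plaques in the \emph{boundary leaves} $L_1\cup L_2\subset\Lambda$ (the ``gut boundary'' in that lemma means the portion of $\partial\hat V$ bordering the gut), not about any $su$-structure inside the gut; the interior of $U$ lies entirely outside $\Lambda$ and carries no $su$-plaques of $\Lambda$ at all. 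Your recurrence-plus-closing scheme therefore has no foothold. Replace it with the compactness argument above.
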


\begin{proof}
    One can see that $A$ is an open set by the continuity of center curves. We claim that the length of center curves in $A$ is uniformly bounded from above. Indeed, it is obvious for points lying in the interstitial regions. We can define a function $\tau:A\rightarrow \mathbb{R}^+$ such that $\tau(x)$ represents the length of the center curve through $x$ connecting two boundary leaves of $U$. This function is clearly continuous. As the gut of $U$ is compact, continuity of $\tau$ gives a uniform upper bound.

    If $A$ does not coincide with $U$, then there exists $y\in \overline{A}\setminus A$ accumulated by $x_n\in A$ as $n\rightarrow \infty$. Since $\tau(x_n)$ is uniformly bounded, the center segments of $x_n$ joining the two boundary leaves of $U$ converge to a center curve through $z$ joining the same boundary leaves with a bounded length. It implies $z\in A$ and thus $A=U$.
\end{proof}

We now establish unique center integrability in complementary regions.

\begin{prop}\label{integ-gut}
    For any $f$-invariant compact lamination $\Lambda\subset \Gamma(f)$ (could be proper) without compact leaves, the bundle $E^c$ is uniquely integrable in every complementary region of $\Lambda$. Consequently, each closed complementary region of $\Lambda$ forms an $I$-bundle, and $\Lambda$ extends canonically to a foliation of $M$ without compact leaves.
\end{prop}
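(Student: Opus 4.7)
Plan: The proposition has three parts: (i) unique integrability of $E^c$ at every point of a complementary region $U$ of $\Lambda$; (ii) the $I$-bundle structure on $\overline{U}$; and (iii) the extension of $\Lambda$ to a foliation of $M$ without compact leaves. The crux is (i); tasks (ii) and (iii) follow formally from (i) together with Lemma \ref{dense}.

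Fix an $f$-invariant complementary region $U$ of $\Lambda$---possible after a finite iterate, since compactness of $M$ yields only finitely many complementary regions---with boundary leaves $L_1, L_2$ from Corollary \ref{two-boundary}. Lemma \ref{dense} provides, for every $y \in U$, a center curve of length at most a uniform constant $K$ joining $L_1$ to $L_2$ through $y$. Suppose for contradiction that $E^c$ is not uniquely integrable at some $x \in U$; by the $f \leftrightarrow f^{-1}$ symmetry, assume $E^s \oplus E^c$ fails unique integrability at $x$. The convention adopted in Section \ref{section3} gives two distinct center arcs through $x$ inside $W^{cu}_{\text{loc}}(x)$, which by Lemma \ref{dense} extend to distinct center curves $\gamma_1, \gamma_2 \subset W^{cu}(x) \cap \overline{U}$ of length $\leq K$ from $L_1$ to $L_2$ through $x$. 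Since $L_1, L_2$ are $u$-saturated and each $\gamma_i \subset W^{cu}(x)$, the endpoints of $\gamma_1, \gamma_2$ on each $L_j$ lie on a common local unstable leaf. Iterating forward, the $f^n(\gamma_i)$ are still center curves in $\overline{U}$ joining $L_1$ to $L_2$ through $f^n(x)$ of length $\leq K$ (by Lemma \ref{dense}), while the unstable separation between their endpoints on $L_j$ scales exponentially in $n$. Using Lemma \ref{lem1}, compactness of the gut, and thinness of interstitial regions, we argue that the image in $L_1$ of the center-$K$-neighborhood of any point of $U$ cannot accommodate an unstable arc of arbitrarily large length---yielding the sought contradiction. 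The degenerate case in which $\gamma_1, \gamma_2$ share endpoints on $L_1$ (so they bound a true bigon in $W^{cu}(x)$) is handled by the same bounded-length-versus-expanding-width comparison. Non-unique integrability of $E^c \oplus E^u$ is treated symmetrically using $f^{-1}$.

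With (i) established, the bounded-length center arcs foliate $\overline{U}$ continuously, and proportional parameterization produces the homeomorphism $L_1 \times [0,1] \to \overline{U}$ required for (ii). For (iii), assembling $\Lambda$ with the center foliations over the (finitely many) complementary regions yields a continuous codimension-one foliation $\mathcal{F}$ of $M$ with no compact leaves---since $\Lambda$ has none by hypothesis, and $I$-bundle fibers are arcs with endpoints on distinct leaves of $\Lambda$, hence never closed loops. The main obstacle is the expansion-versus-boundedness step in (i): controlling how large an unstable arc in $L_1 \cap \overline{U}$ can fit inside the $L_1$-projection of a bounded center-neighborhood of a point in $U$, using the gut/interstitial decomposition of $\overline{U}$ inherited from $\Lambda$.
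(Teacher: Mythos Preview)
Your argument hinges on two claims that are not justified by the tools you cite. First, you invoke Lemma \ref{dense} to extend the two short center arcs $c_1, c_2 \subset W^{cu}_{\text{loc}}(x)$ to full center curves $\gamma_1, \gamma_2 \subset W^{cu}(x) \cap \overline{U}$ of length at most $K$ joining $L_1$ to $L_2$. But Lemma \ref{dense} only asserts that \emph{some} center curve through each $y \in U$ joins $L_1$ to $L_2$ with bounded length; it says nothing about extending a prescribed initial arc, nor about the extension remaining inside a chosen $cu$-manifold. Without the $W^{cu}(x)$ containment you cannot conclude the endpoints lie on a common unstable leaf, and the expansion step collapses.

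Second, and more seriously, you assert that $f^n(\gamma_i)$ still has length at most $K$ ``by Lemma \ref{dense}''. This is the same misreading: Lemma \ref{dense} produces, for each $n$, \emph{some} bounded center curve through $f^n(x)$, but there is no reason it should coincide with $f^n(\gamma_i)$. In fact Lemma \ref{lem2} pulls in the opposite direction---it says that under non-unique integrability of $E^s \oplus E^c$, forward iterates of a center sub-arc in the bad separatrix must eventually leave every $\delta$-ball around the iterated basepoint. So the specific curves $f^n(\gamma_i)$ are precisely the ones you \emph{cannot} expect to stay short, and the boundedness-versus-expansion contradiction does not close. The degenerate bigon case has the same defect.

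The paper's route bypasses this by never trying to bound a specific iterated curve. It shows (via the argument of Proposition \ref{integ-boundary}, using Lemma \ref{suplaque} and the Anosov closing lemma to rule out infinite recurrence to gut boundaries) that the orbit of any $y \in U$ visits interstitial regions infinitely often in both time directions, and then applies Lemma \ref{visit_inters_infty} directly. The key mechanism is that inside an interstitial region \emph{every} center arc is short---so one restarts the growth argument of Lemma \ref{lem2} from a short sub-arc at each return to an interstitial and obtains a contradiction from thinness alone, never needing a global length bound on $f^n(\gamma_i)$.
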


\begin{proof}
    First, we prove unique integrability of $E^c$ in $A\subset U$ for any complementary region $U\subset V$. Take any point $y \in A$. By definition of $A$, there exists a center curve $\gamma$ through $y$ connecting both boundary leaves. The argument in Proposition \ref{integ-boundary} implies that after finitely many iterates, $\gamma$ lies entirely within interstitial regions. Lemma \ref{visit_inters_infty} then forces unique integrability at $y$. Lemma~\ref{dense} establishes unique integrability throughout $U$. The connectivity via center curves of uniformly bounded lengths implies the $I$-bundle of $U$, completing the proof.
\end{proof}

The preceding proposition establishes that when an invariant lamination $\Lambda\subset\Gamma(f)$ contains no compact leaves, each complementary region fibers over a boundary leaf via center segments. Hence, each region is homeomorphic to the product of a boundary leaf with the open interval $(0,1)$. Since $E^c$ is uniquely integrable throughout these regions, $\Lambda$ trivially extends to a foliation without compact leaves on all of $M$. The particular case $\Lambda=\Gamma(f)$ follows immediately.

\subsection{Completion of unique integrability}

We devote this subsection to the proof of Theorem \ref{integrability} for closed manifolds of arbitrary dimension with 1-dimensional center bundle.

\begin{prop}\label{integ-rest}
	The center bundle $E^c$ is uniquely integrable in $\Gamma(f)$.
\end{prop}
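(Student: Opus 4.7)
The plan is to handle the remaining points of $\Gamma(f)$ by a case analysis on the two oriented half-directions at each point, reducing everything to tools already developed in this section. First, by Proposition \ref{integ_compactleaf}, if $\Gamma(f)$ contains a compact leaf we are already done, so I would assume throughout that $\Gamma(f)$ consists entirely of non-compact leaves, placing us in the regime of Section \ref{section_integ_boundary}. Recall that unique integrability of $E^c$ at a point $x$ is equivalent to unique integrability of both $E^c_+$ and $E^c_-$ at $x$, so it suffices to treat a single half-direction at a time.

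Fix $x\in\Gamma(f)$ and a half-direction at $x$. Following the dichotomy set up just before Proposition \ref{integ-boundary}, this half-direction is either associated with a complementary region or associated with the lamination. In the first situation the point $x$ necessarily lies on a boundary leaf of $\Gamma(f)$, and Proposition \ref{integ-boundary} immediately yields unique integrability in that half-direction. In the second situation every arbitrarily short center curve in that half-direction meets $\Gamma(f)$ beyond $x$, so one extracts a sequence of $su$-plaques $P_i\subset\Gamma(f)$ accumulating on $x$ from that side; since $E^c$ is transverse to $E^s\oplus E^u$ and $\Gamma(f)$ is a codimension-one lamination (Proposition \ref{jointint}), the local product structure of $\Gamma(f)$ forces every sufficiently short center curve in that half-direction to cross infinitely many of the $P_i$ transversally. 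The hypotheses of Lemma \ref{integ-accumulated} are then in force and deliver unique integrability in that half-direction.

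Every configuration of $x\in\Gamma(f)$ is covered by this dichotomy: interior points of the lamination land in the second case on both sides, non-isolated boundary leaf points land in the first case on one side and the second on the other, and isolated boundary leaves (those bordering complementary regions on both sides) land in the first case twice. Applying the dichotomy to $E^c_+$ and $E^c_-$ separately then establishes unique integrability of $E^c$ at every point of $\Gamma(f)$, which is the statement of the proposition.

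The main technical step I expect to be the most delicate is the verification in the second case that the sequence of $su$-plaques not only exists but is transversally cut by every sufficiently short center curve in the chosen direction; this rests on the local product structure of the lamination $\Gamma(f)$ together with the transversality of $E^c$ to $E^s\oplus E^u$, and on care in passing from ``every short center curve meets $\Gamma(f)$'' to genuine accumulation of $su$-plaques at $x$ from the relevant side. Combined with Proposition \ref{integ-gut}, which settles unique integrability in $M\setminus\Gamma(f)$, the proposition completes the proof of Theorem \ref{integrability}.
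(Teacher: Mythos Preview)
Your proposal is correct and follows essentially the same approach as the paper: both argue half-direction by half-direction, invoking Proposition \ref{integ-boundary} for directions associated with a complementary region and Lemma \ref{integ-accumulated} for directions associated with the lamination. Your write-up is in fact slightly more careful than the paper's in that you explicitly flag the isolated-boundary-leaf case (both sides bordering complementary regions) and the transversality verification needed to feed Lemma \ref{integ-accumulated}, whereas the paper's phrasing tacitly assumes a boundary point is accumulated by plaques on at least one side.
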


\begin{proof}
	For each point $x\in \Gamma(f)$, it suffices to show that both $E^c_+(x)$ and $E^c_-(x)$ are uniquely integrable. Recall that we denote by $\mathcal{C}^+(x)$ and $\mathcal{C}^-(x)$ the collection of integral curves of $E^c_+(x)$ and $E^c_-(x)$, respectively, with the common endpoint $x$. If $x$ is an interior point of $\Gamma(f)$, then both $\mathcal{C}^+(x)$ and $\mathcal{C}^-(x)$ contain small curves intersecting infinitely many plaques of $\Gamma(f)$. Lemma~\ref{integ-accumulated} then implies unique integrability of $E^c$ at $x$.

    If $x$ is a boundary point of $\Gamma(f)$, then there exists a small curve in either $\mathcal{C}^+(x)$ or $\mathcal{C}^-(x)$ intersecting infinitely many plaques of $\Gamma(f)$. We assume this for $\mathcal{C}^+(x)$. Then, by Lemma~\ref{integ-accumulated}, $E^c_+(x)$ is uniquely integrable. As $x$ is a boundary point of $\Gamma(f)$, $\mathcal{C}^-(x)$ contains a small curve lying in a complementary region. Proposition~\ref{integ-boundary} ensures unique integrability for $E^c_-(x)$. Thus $E^c(x)$ is uniquely integrable at all points of $\Gamma(f)$. 
\end{proof}

Now, we are ready to present the proof of Theorem \ref{integrability}.

\begin{proof}[Proof of Theorem \ref{integrability}]
	As explained in the begining of the section, we can assume that the manifold $M$ and the center bundle $E^c$ are both oriented with orientations preserved by $f$. Since $f$ is not accessible, the non-open accessibility classes constitute a non-empty codimension-one compact set $\Gamma(f)\neq \emptyset$ by Proposition \ref{jointint}.
	
	The center bundle $E^c$ is uniquely integrable if $\Gamma(f)=M$ by Proposition \ref{Gamma(f)=M}. It suffices to consider $\emptyset\neq\Gamma(f)\neq M$ as a proper lamination. We obtain the unique integrability in the case where $\Gamma(f)$ contains a compact leaf in Proposition \ref{integ_compactleaf}. In the absence of compact leaves, we obtain the central unique integrability in $\Gamma(f)$ (Proposition~\ref{integ-rest}) and its all complementary regions (Proposition~\ref{integ-gut}). Thus, we complete the proof.
\end{proof}

\subsection{Description on accessibility classes}

This subsection provides a complete characterization of accessibility classes for systems without periodic points, including the proof of Theorem \ref{codim1}.

A foliation is \textit{minimal} if every leaf is dense in $M$. A subset $\Lambda \subset M$ is a \textit{minimal set} of a foliation if it is a compact sublamination where each leaf is dense in $\Lambda$.

\begin{thm}{\cite[Theorem 4.1.3]{HectorHirsch}}\label{hectorhirsch}
    Any codimension-one foliation without compact leaves on a closed manifold has finitely many minimal sets.
\end{thm}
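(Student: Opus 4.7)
The plan is to prove this finiteness result by transferring the question to the holonomy pseudogroup acting on a finite union of transversal arcs and then exploiting one-dimensionality to bound the number of invariant minimal sets.

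First I would recall the structural dichotomy for minimal sets of codimension-one foliations. For any minimal set $\Lambda$ of $\mathcal{F}$, its intersection with a small transversal arc $\tau$ is a closed $\mathrm{Hol}(\mathcal{F})$-invariant set, and a classical argument (going back to Haefliger) shows that $\Lambda \cap \tau$ is either a discrete finite set (forcing $\Lambda$ to be a compact leaf), or the whole of $\tau$ (forcing $\Lambda = M$ with $\mathcal{F}$ minimal), or a Cantor set (the \emph{exceptional} case). Since $\mathcal{F}$ has no compact leaves, only the last two alternatives survive. If $\mathcal{F}$ is minimal then $M$ itself is the unique minimal set and we are done, so from here on we assume every minimal set is exceptional.

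Next, I would fix a finite foliation atlas $\{(U_k,\varphi_k)\}_{k=1}^{N}$ of $M$ with associated compact transversals $\tau_k$, and set $\tau = \bigsqcup_{k=1}^N \tau_k$. Every leaf of $\mathcal{F}$ meets $\tau$, so every minimal set $\Lambda_i$ contributes a closed $\mathrm{Hol}(\mathcal{F})$-minimal Cantor set $K_i := \Lambda_i \cap \tau \subset \tau$, and distinct minimal sets yield disjoint $K_i$'s. The task is therefore reduced to bounding the number of disjoint minimal Cantor sets for the holonomy pseudogroup acting on the one-dimensional compact space $\tau$.

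The main estimate is the following counting argument, which is where I expect the real work to lie. For each exceptional minimal set $K_i$, select a point $p_i \in K_i$ together with a one-sided gap of $\tau \setminus K_i$ abutting $p_i$; call its endpoint leaf a boundary leaf of $\Lambda_i$. Using Dippolito-type semi-stability (or equivalently the Sacksteder-style contraction argument applied inside a complementary region), each such boundary leaf carries a nontrivial one-sided contracting holonomy element $h_i$ defined on an interval $J_i \subset \tau$ with $p_i \in \overline{J_i}$ and $K_i \cap J_i$ accumulating on $p_i$. Then I would show that the intervals $J_i$ associated to distinct $\Lambda_i$ can be chosen pairwise disjoint in $\tau$: if $J_i \cap J_j \neq \emptyset$ for $i \neq j$, then the minimality of each $K_i$ forces the contracting iterates of $h_i$ and $h_j$ to map points of $K_j$ into $K_i$ (or vice versa), contradicting $K_i \cap K_j = \emptyset$ together with closedness of minimal sets. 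Since $\tau$ is a finite disjoint union of compact arcs, it admits only finitely many pairwise disjoint open subintervals of length uniformly bounded below (the length lower bound comes from uniform continuity of the finitely many generators of the holonomy pseudogroup on the compact $\tau$), yielding the desired finite bound on the number of $\Lambda_i$.

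The main obstacle is the construction and disjointness of the intervals $J_i$: extracting a uniformly contracting holonomy element at a boundary leaf of an arbitrary exceptional minimal set requires the Dippolito/Sacksteder resilient-leaf machinery, and the disjointness argument must be carried out carefully because a single gap of $\tau \setminus K_i$ could a priori still meet another minimal Cantor set $K_j$. I would handle this by shrinking $J_i$ to lie strictly inside a single connected component of $\tau \setminus K_i$ and invoking the fact that $K_j$, being closed and $\mathrm{Hol}(\mathcal{F})$-minimal, cannot partially intersect such a gap without the contracting iterates of $h_i$ producing a nontrivial closed proper invariant subset of $K_j$, violating minimality. Once this disjointness is secured, the finiteness conclusion follows immediately from the one-dimensional geometry of $\tau$.
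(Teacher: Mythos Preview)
The paper does not prove this theorem; it is cited from Hector--Hirsch without reproducing the argument, so there is no ``paper's own proof'' to compare against. I will assess your proposal on its own terms.

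Your reduction to the holonomy pseudogroup on a compact total transversal and the trichotomy for minimal sets are standard and correct. The genuine problem lies in the final counting step. You assert that the intervals $J_i$ have lengths uniformly bounded below, and you attribute this to ``uniform continuity of the finitely many generators of the holonomy pseudogroup on the compact $\tau$.'' This is a non sequitur: uniform continuity of the generators controls how much they distort small intervals, but it says nothing about the size of the domain of the particular contracting element $h_i$ you extract at each boundary leaf. That element is typically a long word in the generators, and its domain of definition can be arbitrarily small. Without the uniform lower bound, pairwise disjointness of the $J_i$ inside a compact one-manifold does \emph{not} yield finiteness --- a compact interval happily contains infinitely many pairwise disjoint open subintervals with lengths tending to zero.

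A secondary issue is regularity. You invoke ``Sacksteder-style contraction'' to produce each $h_i$, but Sacksteder's theorem requires $C^2$; the result here is stated (and is needed in the paper) for $C^0$ foliations, where exceptional minimal sets need not carry resilient leaves. Dippolito's semi-stability is indeed a $C^0$ statement, but it concerns triviality of holonomy on the \emph{proper} side of a semi-proper leaf, not the existence of a contracting element on the minimal-set side. The standard $C^0$ argument (as in Hector--Hirsch) goes instead through the structure theorem for open saturated sets: any second exceptional minimal set sitting in a complementary region of the first must meet the compact nucleus (gut) of that region, since leaves in the $I$-bundle arms accumulate on the boundary; iterating this and using that a compact manifold supports only finitely many such nuclei gives the bound. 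Your disjoint-interval strategy could perhaps be repaired by replacing the contraction intervals $J_i$ with intervals coming from the nucleus/arm decomposition, but as written the argument does not close.
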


For a partially hyperbolic diffeomorphism with one-dimensional center, we could obtain the following dichotomy on the set of non-open accessibility classes.

\begin{prop}\label{minimal}
	Let $f: M\to M$ be a partially hyperbolic diffeomorphism of a closed manifold with $dim E^c=1$. If $f$ has no periodic points and is not accessible, then either
	\begin{enumerate}
	    \item $\Gamma(f)$ contains infinitely many compact $su$-leaves; or
        \item $\Gamma(f)$ admits a unique minimal set (not necessarily distinct from $\Gamma(f)$).
	\end{enumerate}
\end{prop}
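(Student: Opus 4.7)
The plan is to dichotomize based on whether $\Gamma(f)$ contains a compact leaf, using the canonical foliation extension from Proposition~\ref{integ-gut} in the non-compact case and then Theorem~\ref{hectorhirsch} to control the minimal sets.

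First, suppose $\Gamma(f)$ contains at least one compact leaf. By Theorem~\ref{Hae62}, the union $\Lambda_{c}$ of compact leaves of $\Gamma(f)$ is itself a compact sublamination. If $\Lambda_{c}$ were finite, $f$ would permute its leaves and some iterate $f^{K}$ would fix a single compact leaf $L$. But then $f^{K}|_{L}$ preserves the hyperbolic splitting $E^{s}|_{L}\oplus E^{u}|_{L}=TL$, making it an Anosov diffeomorphism of the closed manifold $L$; classical Anosov theory then guarantees at least one periodic point of $f^{K}$, and hence of $f$, contradicting the hypothesis. Thus $\Lambda_{c}$ must be infinite, placing us in case~(1).

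Assume henceforth that $\Gamma(f)$ has no compact leaves. Proposition~\ref{integ-gut} with $\Lambda=\Gamma(f)$ extends $\Gamma(f)$ canonically to an $f$-invariant codimension-one foliation $\mathcal{F}$ of $M$ without compact leaves, with each complementary region trivially foliated by center arcs. By Theorem~\ref{hectorhirsch}, $\mathcal{F}$ admits only finitely many minimal sets, and each such minimal set lies within $\Gamma(f)$: an $\mathcal{F}$-leaf in a complementary region $U$ of $\Gamma(f)$ is an interior horizontal slice of an $I$-bundle, whose closure in $M$ meets both boundary leaves of $U$, so it cannot be dense in its own closure. Thus the minimal sets of $\mathcal{F}$ are precisely the minimal sublaminations of $\Gamma(f)$, and they are finite in number; passing to a suitable iterate, I may assume all of them are $f$-invariant.

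To obtain uniqueness I would argue by contradiction. Suppose $\Lambda_{1}\neq\Lambda_{2}$ are two $f$-invariant minimal sublaminations. Proposition~\ref{integ-gut} applied to the compact $f$-invariant union $\Lambda_{1}\cup\Lambda_{2}$ (still without compact leaves) gives the $I$-bundle structure of its complementary regions. By connectedness of $M$, running a transverse arc from $\Lambda_{1}$ to $\Lambda_{2}$ and following it across consecutive plaques locates a \emph{bridge region} $U$ whose two boundary leaves lie one in $\Lambda_{1}$ and one in $\Lambda_{2}$. After a further iterate, I would arrange for such a bridge region to be $f$-invariant and focus on its compact gut: by Lemma~\ref{suplaque} the gut boundary carries only finitely many $su$-plaques, so the $f$-action on these finitely many plaques yields a plaque returning to itself, whence the Anosov Closing Lemma~\ref{closinglemma} produces a periodic point of $f$---the desired contradiction.

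The main obstacle is the last step: one must simultaneously secure an $f$-invariant bridge region and guarantee it has a nonempty gut to which Lemma~\ref{suplaque} applies. In the purely interstitial regime (all bridge regions are thin $I$-bundles with no guts), the argument must be refined---using the uniform thinness of interstitial regions and the $f$-equivariance of the center holonomy between $L_{1}\subset\Lambda_{1}$ and $L_{2}\subset\Lambda_{2}$, minimality of $\Lambda_{1}$ and $\Lambda_{2}$ forces recurrence of an $su$-plaque on a boundary leaf, again producing a periodic point via Lemma~\ref{closinglemma}.
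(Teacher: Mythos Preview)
Your setup through the finiteness of minimal sets matches the paper exactly. The uniqueness argument, however, has a genuine gap. The claim that ``the $f$-action on these finitely many plaques yields a plaque returning to itself'' is unjustified: even once the bridge region is $f$-invariant, the gut--interstitial decomposition is defined only up to isotopy and is \emph{not} dynamically invariant, so $f$ may carry a gut-boundary $su$-plaque into the interstitial boundary. Indeed, the paper's Proposition~\ref{integ-boundary} exploits precisely that orbits of boundary points visit gut boundaries only finitely many times. Pigeonhole over the finite set of gut-boundary plaques therefore does not force any plaque to recur, and Lemma~\ref{closinglemma} cannot be invoked as you propose. Your fallback for the purely interstitial case is too vague to repair this.

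The paper bypasses any periodic-point search and instead uses the full strength of Proposition~\ref{integ-gut}, specifically Corollary~\ref{two-boundary}: every closed complementary region of an invariant sublamination of $\Gamma(f)$ without compact leaves has \emph{exactly two} boundary leaves. One first shows that no leaf of $\Lambda_2$ can meet an interstitial region of $M\setminus\Lambda_1$, since by transversality it would then cross every center fiber there and, as those fibers become arbitrarily short, accumulate on $\Lambda_1$---contradicting $\Lambda_1\cap\Lambda_2=\emptyset$. Hence $\Lambda_2$ lies entirely inside the gut of a single closed complementary region $\hat U$ of $\Lambda_1$. But then the component of $\hat U\setminus\Lambda_2$ containing the interstitial part of $\hat U$ is a complementary region of $\Lambda_1\cup\Lambda_2$ bounded by both $\Lambda_1$-boundary leaves of $\hat U$ together with at least one boundary leaf of $\Lambda_2$: at least three boundary leaves, contradicting Corollary~\ref{two-boundary} applied to $\Lambda_1\cup\Lambda_2$. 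This is a purely topological contradiction, requiring no recurrence argument at all.
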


\begin{proof}
	
	If $\Gamma(f)$ contains a compact $su$-leaf, then there must be infinitely many compact leaves since $f$ has no periodic points. Assume that $\Gamma(f)$ contains no compact leaves. By Proposition~\ref{integ-gut}, if $\Gamma(f)$ is a proper lamination, then it trivially extends to a foliation, denoted by $\mathcal{F}$, without compact leaves. When $\Gamma(f)$ is a foliation, we denote by $\mathcal{F}=\Gamma(f)$. By Theorem~\ref{hectorhirsch}, the foliation $\mathcal{F}$ has finitely many minimal sets, hence so does $\Gamma(f)$. The union of all minimal sets in $\Gamma(f)$ is $f$-invariant by the invariance of $\Gamma(f)$. Suppose that there are two distinct minimal sets  $\Lambda\neq \Lambda'\subset\Gamma(f)$. Then, up to a finite iterate, we can assume that both $\Lambda$ and $\Lambda'$ are $f$-invariant. 
	
	By Proposition~\ref{integ-gut}, each closed complementary region of $\Lambda$ forms an $I$-bundle with exactly two boundary leaves. Moreover, the fibers of each $I$-bundle can be taken as center segments. If there is a leaf $L\in \Lambda'$ intersecting an interstitial region of any complementary region of $\Lambda$, then it must intersect all $I$-fibers in interstitial regions by transversality. As any interstitial region becomes arbitrarily thin away from the gut, the leaf $L$ accumulates on some boundary leaf of $\Lambda$. But then $\overline{L}\subset \Lambda'$ has non-empty intersection with $\Lambda$. This is impossible since $\Lambda$ and $\Lambda'$ are disjoint. 
	
	It turns out that $\Lambda'$ lies entirely within a gut piece $D\subset G$ of some complementary region of $\Lambda$. Denote by $\hat{U}$ be the closed complementary region containing $D$. As $\Lambda$ and $\Lambda'$ are both compact, they are uniformly separated; that is, their distance is bounded from below. Then, each boundary leaf of $\Lambda'$ is entirely contained in the gut piece $D$. Note that the union $\Lambda\cup \Lambda'$ is also an $f$-invariant compact lamination. By Proposition~\ref{integ-gut}, each closed complementary region of $\Lambda\cup \Lambda'$ is an $I$-bundle and has exactly two boundary leaves. Since $\Lambda$ and $\Lambda'$ are disjoint, the boundary of $\Lambda\cup\Lambda'$ consists of the boundary leaves of $\Lambda$ and $\Lambda'$. Now, the closed complementary region of $\Lambda\cup \Lambda'$ in $\hat{U}$ has at least three boundary leaves, including two boundary leaves of $\Lambda$ and at least one boundary leaf of $\Lambda'$. Thus, we arrive at a contradiction, which implies that $\Gamma(f)$ has at most one minimal set.
	
	Since $\Gamma(f)$ is a compact lamination without compact leaves, it admits at least one minimal set. It implies that $\Gamma(f)$ admits exactly one minimal set, which could be $\Gamma(f)$ or a proper subset. Hence, we finish the proof.
\end{proof}

Now, we provide our proof of Theorem \ref{codim1}.

\begin{proof}[Proof of Theorem \ref{codim1}]
	Note that it suffices to show the result up a finite lift and a finite iterate. Indeed, any finite iterate of $f$ admits the same accessibility classes as $f$. Any accessibility class in a finite cover projects by the covering map to a subset in an accessibility class of $f$ in $M$. Each minimal set in a finite cover also projects to a minimal set in $M$. Moreover, the absence of periodic points persists under finite iterates and finite covers. Thus, by taking a finite lift and an iterate if necessary, we can assume that $M$ and $E^c$ and both oriented and $f$ preserves their orientations.
    
    Assume that $f$ is not accessible, which means that the set $\Gamma(f)$ is not empty. If $\Gamma(f)$ contains finitely many compact leaves, then each one of them is $f$-periodic. Up to a finite iterate, the restriction of $f$ to any such compact leaf is uniformly hyperbolic, implying the existence of periodic points by the Anosov Closing Lemma (see also Lemma \ref{closinglemma}). Thus $\Gamma(f)$ contains infinitely many compact accessibility classes whenever it contains at least one compact leaf.
	
	Assume that $\Gamma(f)$ has no compact leaves. By Proposition \ref{minimal}, there exists a unique minimal set $\Lambda\subset \Gamma(f)$, necessarily $f$-invariant. The case where $\Lambda=M$ forces $\Gamma(f)$ to be a minimal foliation without compact leaves. If $\Lambda$ is a proper lamination, then by Proposition \ref{integ-gut}, the closed complementary regions of $\Lambda$ are all $I$-bundles by center segments. Moreover, $\Lambda$ trivially extends to a foliation without compact leaves.
	
	If we further assume $NW(f)=M$, then $\Lambda$ must be a minimal foliation and $\Lambda=\Gamma(f)=M$. Otherwise, if $\Lambda$ is a proper subset, then it has finitely many complementary regions with gut pieces. Moreover, each boundary leaf of such complementary region is $f$-periodic. As shown by \cite[Proposition A.5]{08invent}, such boundary leaves of $\Lambda$ contain dense periodic points, which contradicts the absence of periodic points. 
	
	Hence, we finish the proof.
\end{proof}

\section{Accessibility}\label{section4}

In this section, we are going to show the accessibility of a partially hyperbolic diffeomorphism without periodic points in a closed three-dimensional manifold, see Theorem \ref{accessible}. Before doing that, we shall introduce the completeness of center-stable foliations for non-accessible partially hyperbolic diffeomorphisms without periodic points whenever the center bundle is one-dimensional.

\subsection{Completeness of center stable foliation}

Let $M$ be a closed manifold and $f:M\rightarrow M$ be a non-accessible partially hyperbolic diffeomorphism so that it has no periodic points and the center bundle $E^c$ is one-dimensional. As shown by Theorem \ref{integrability}, the center bundle $E^c$ is uniquely integrable. Denote by $\mathcal{F}^c$ the unique $f$-invariant foliation tangent to $E^c$. In particular, there exist $f$-invariant center-stable and center-unstable foliations, denoted by $\mathcal{F}^{cs}$ and $\mathcal{F}^{cu}$, respectively. For any set $S\subset M$, define $W^{\sigma}(S):=\bigcup\limits_{x\in S}W^{\sigma}(x)$ $(\sigma=s,c, u)$, where $W^{\sigma}(x)$ is the immersed manifold tangent to $E^{\sigma}$ through $x$.

\begin{defn}
	The center-stable foliation $\mathcal{F}^{cs}$ is \emph{complete} if $\mathcal{F}^{cs}(x)=W^{s}(W^c(x))$ for every point $x\in M$.
\end{defn}

This concept, introduced in \cite{BW05}, implies that every center-stable leaf is a topological product of center and stable leaves. Equivalently, the foliation $\mathcal{F}^{cs}$ is complete if and only if $\mathcal{F}^{cs}(x)=W^s(W^c(x))=W^c(W^s(x))$ for any $x\in M$, see \cite[Section 3]{FU1}. When $\mathcal{F}^{cs}$ is not complete, there exists a point $x\in M$ such that $W^s(W^c(x))$ is a proper subset of the leaf $\mathcal{F}^{cs}(x)$. In such case, there must be a point $y\in \mathcal{F}^{cs}(x)$ and a center curve $\gamma : [0, 1]\rightarrow M$ satisfying $$ \gamma (\left[0, 1\right))\subset W^s(W^c(x)) \quad  and \quad  \gamma(1)= y \notin W^s(W^c(x)).$$ We call $y$ an \emph{accessible boundary point} of $W^s(W^c(x))$.

\begin{lem}\label{complete-lem}
    Let $f:M\to M$ be a non-accessible partially hyperbolic diffeomorphism without periodic points and with one-dimensional center bundle $E^c$. If $\mathcal{F}^{cs}(x)$ is not complete for some $x\in M$, then for each accessible boundary point $y$ of $W^s(W^c(x))$, the $\omega$-limit set $\omega(y)$ belongs to $\Gamma(f)$.
\end{lem}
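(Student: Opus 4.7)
The plan is a proof by contradiction: assume some $z\in\omega(y)$ lies in $M\setminus\Gamma(f)$. Then $AC(z)$ is open; I would pick $\epsilon>0$ with $B(z,\epsilon)\subset AC(z)$ and a subsequence $n_k\to\infty$ with $f^{n_k}(y)\to z$. For large $k$ one has $f^{n_k}(y)\in B(z,\epsilon)\subset AC(z)$, so $AC(f^{n_k}(y))=AC(z)$ is open, and by $f$-invariance of $\Gamma(f)$ so is $AC(y)$; I then fix $\delta>0$ with $B(y,\delta)\subset AC(y)$.

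The next step is to exploit the accessible-boundary structure along $\gamma$. For $t<1$ close to $1$, the center-curve point $\gamma(t)\in B(y,\delta)\cap W^s(W^c(x))$ is stably joined to a unique $q(t):=W^s(\gamma(t))\cap W^c(x)$, and via this stable arc $q(t)\in AC(\gamma(t))=AC(y)$; in particular $AC(y)\cap W^c(x)\neq\emptyset$. Continuity of stable manifolds combined with $y\notin W^s(W^c(x))$ would force $d^s(\gamma(t),q(t))\to\infty$ as $t\to1$, for otherwise a subsequential limit of the stable arcs would place $y$ inside $W^s(W^c(x))$. Using the unique integrability of $E^c$ (Theorem~\ref{integrability}) and $f$-invariance of $\mathcal{F}^s$, $\mathcal{F}^c$, $\mathcal{F}^{cs}$, the same picture propagates: each $f^n(y)$ is an accessible boundary point of $W^s(W^c(f^n(x)))$ approached by $f^n\circ\gamma$, with analogous $q$-points in $W^c(f^n(x))\cap AC(f^n(y))$.

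The decisive step is a diagonal argument combining stable exponential contraction with this divergence. Let $\mu<1$ be a uniform stable contraction constant. For each large $k$, the continuous function $t\mapsto d^s\bigl(f^{n_k}(\gamma(t)),f^{n_k}(q(t))\bigr)$, which equals $\mu^{n_k}d^s(\gamma(t),q(t))$ up to bounded distortion, grows from small values to $\infty$ as $t\to1$, so I can pick $t_k$ close to $1$ with this quantity equal to a small constant $\eta\ll\epsilon$. Passing to further subsequences yields limits $f^{n_k}(\gamma(t_k))\to\tilde z$, $f^{n_k}(q(t_k))\to w$ with $w\in W^s_\eta(\tilde z)$ and both limits in $\overline{B(z,\epsilon)}\subset\overline{AC(z)}$; extracting further, $f^{n_k}(x)\to x^\ast$ so that $w\in W^c(x^\ast)$. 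The openness of $AC(z)$ together with this limit configuration should then produce — via the nontrivial $su$-holonomy in the center direction forced by non-joint-integrability at $z$, combined with the repeated recurrence $f^{n_k}(y)\to z$ — either a finite stable arc from $y$ to $W^c(x)$ (contradicting the accessible-boundary hypothesis) or a periodic orbit of $f$ via a closing-type argument, contradicting the no-periodic-points hypothesis.

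The main obstacle will be twofold. First, the center position of $\gamma(t_k)$ under $f^{n_k}$ can drift since $E^c$ is not hyperbolic, and one must choose $t_k$ so carefully that $\tilde z$ actually lands in $B(z,\epsilon)$; the unique integrability of $E^c$ (Theorem~\ref{integrability}) gives the continuous family of center plaques needed to balance this drift against the stable calibration condition. Second, Lemma~\ref{closinglemma} is stated for base points in $\Gamma(f)$, whereas here the relevant points lie in the open accessibility class $AC(z)\subset M\setminus\Gamma(f)$; the closing step therefore requires adapting the Anosov closing lemma to the open-class setting, using the one-dimensionality of $E^c$ together with the return dynamics $f^{m_k}$ with $m_k=n_{k+1}-n_k$ to simultaneously close up the $s$, $u$, and $c$ directions and produce a fixed point of a suitable iterate of $f$.
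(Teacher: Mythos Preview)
Your proposal has a genuine gap exactly where you flag the ``main obstacle'': the closing argument inside the open accessibility class is not carried out, and the limit configuration $(\tilde z,w,x^\ast)$ you assemble does not by itself force either a finite stable arc from $y$ to $W^c(x)$ or a periodic point. Lemma~\ref{closinglemma} is stated for base points in $\Gamma(f)$ precisely because there one has local codimension-one $su$-plaques transverse to $E^c$; in the open class $AC(z)\subset M\setminus\Gamma(f)$ the $su$-saturate of any small ball is open and full-dimensional, so there is no transversal to close against. Your suggestion to ``simultaneously close up the $s$, $u$, and $c$ directions'' from the returns $f^{n_k}(y)\to z$ is essentially asking for a periodic point of a recurrent, non-hyperbolic map, and nothing in the setup supplies the needed control in the center direction. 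The appeal to ``nontrivial $su$-holonomy forced by non-joint-integrability at $z$'' does not help either: openness of $AC(z)$ tells you $su$-paths reach an open set, but gives no information about how a specific return $f^{m_k}$ acts on a center transversal.

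The paper's argument is organized quite differently and never attempts to close inside the open class. It first uses the accessible-boundary structure, much as you do, to show that the center arc $f^m(\gamma)$ has length exceeding $\delta/2$ for all large $m$. It then splits into cases. If $\Gamma(f)$ contains a compact leaf, a volume argument suffices: the growing arcs $f^m(\gamma)$ lie in pairwise disjoint iterates of a complementary region of the compact sublamination $\mathcal K$, contradicting compactness of $M$, so in fact $y\in\Gamma(f)$ and $\omega(y)\subset\Gamma(f)$ follows trivially. If $\Gamma(f)$ has no compact leaves, the decisive structural input is Proposition~\ref{integ-gut}: each complementary region is an $I$-bundle by center segments over its boundary leaves, with thin interstitial pieces away from finitely many guts. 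Hence any $p\in\omega(y)\setminus\Gamma(f)$ must lie in an interstitial region, and the center fiber through $p$ meets the boundary at some $p^+\in\Gamma(f)$. For returns $f^{n_i}(y)\to p$, the center projection of $f^{n_i}(y)$ to the boundary is a point $y_i\in W^s_{\epsilon_0}(W^u_{\epsilon_0}(p^+))\subset\Gamma(f)$; invariance of the boundary of $\Gamma(f)$ forces $f^{n_i-n_j}(y_j)=y_i$, and now Lemma~\ref{closinglemma} applies verbatim at $p^+$. The $I$-bundle structure is what transfers the recurrence back to $\Gamma(f)$, where closing is available; your outline lacks any substitute for this step.
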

\begin{proof}
    If $y$ is contained in $\Gamma(f)$, then the limit set $\omega(y)$ belongs to $\Gamma(f)$ by the invariance and compactness of $\Gamma(f)$. It suffices to consider the case where $\Gamma(f)$ is a proper lamination and $y$ lies in the complement of $\Gamma(f)$. We will discuss separately the situations whether $\Gamma(f)$ contains a compact leaf or not.

    Let $\delta>0$ be a small number such that for any point $x\in M$, the $\delta$-neighborhood of the leaf $W^c(x)$ in $\mathcal{F}^{cs}(x)$ is contained in $W^{s}(W^c(x))$. Such a $\delta$ exists by transversality of the center and stable foliations; see Lemma~\ref{lem1}. Since the center curve $\gamma$ can be taken as short as desired, we assume that the length of $\gamma$ is less than $\frac{\delta}{16}$. Note that $f^i(y)$ is at least $\delta $-away from $W^c(f^i(x))$ for any $i\in \mathbb{Z}$ by the choice of $\delta$. Otherwise $f^i(y)$ would be contained in $W^s(W^c(f^i(x)))$ and then $y$ would be contained in $W^s(W^c(x))$. Moreover, as $\gamma(0) \in W^s(W^c(x))$, there is $N\in \mathbb{N}$ such that for any $m\geq N$, the distance from $f^m(\gamma(0))$ to $W^c(f^m(x))$ in the leaf $\mathcal{F}^{cs}(f^m(x))$ is smaller than $\frac{\delta}{2}$. This implies that the length of $f^m(\gamma)$ is greater than $\frac{\delta }{2}$ for any $m\geq N$. 
    
    When $\Gamma(f)$ contains a compact leaf, it contains infinitely many compact leaves by Theorem~\ref{codim1}. Denote by $\mathcal{K}\subset\Gamma(f)$ the set of all compact leaves, forming a compact invariant lamination by Theorem~\ref{Hae62}. Denote by $R$ the maximal connected complementary region of $\mathcal{K}$ containing $y$, which is non-empty since $y$ lies in the complement of $\Gamma(f)$. By taking a smaller $\delta>0$ if necessary, we assume the center curve $\gamma$ through $y$ lies entirely within $R$ of length less than $\frac{\delta}{16}$. The argument above establishes that the length of $f^m(\gamma)\subset f^m(R)$ is greater than $\frac{\delta}{2}$ for any $m\geq N$. Lemma~\ref{lem1} then ensures a constant $\rho>0$ dominating the radius of $f^m(R)$ for each $m\geq N$. This contradicts with the compactness of $M$.
    
    Thus, we conclude $y\in \mathcal{K}$ whenever there exists a compact accessibility class, and thus $\omega(y)\subset \mathcal{K}\subset\Gamma(f)$. Now, let us consider the case without compact leaves.
    
    By the absence of compact accessibility class, the complementary regions of the lamination $\Gamma(f)$ consist of finitely many gut pieces and some interstitial regions. Assume that $y$ is contained in a complementary region $U$ of $\Gamma(f)$. By Proposition \ref{integ-gut}, all complementary regions of $\Gamma(f)$ are $I$-bundles by center segments. The point $y$ admits a center segment $c$ whose two endpoints are contained in two boundary leaves of $\hat{U}$ and the interior points are contained in $U$. Let $y^+$ be one of the endpoints of $c$. By Lemma \ref{suplaque}, there are finitely many $su$-plaques in the boundary of each gut. Lemma \ref{closinglemma} ensures that the iterations of $y^+$ are all contained in distinct $su$-plaques. Note that there are finitely many guts due to the compactness of the manifold. Then there is $k\in \mathbb{N}$ such that for any $n\in\mathbb{Z}$ with $|n|\geq k$, the point $f^n(y^+)$ is contained in the boundary of the interstitial regions. As $\Gamma(f)$ and its complement are $f$-invariant, the center segment $f^n(c)$ is contained in the interstitial regions for $|n|\geq k$ except its two endpoints. In particular, each point $f^n(y)$ is contained in the interstitial regions.
	
	Suppose that there is a point $p\in \omega(y)$ lying in the complement of $\Gamma(f)$. Then it must be contained in an interstitial region, denoted by $O$, since $f^n(y)$ cannot intersect any gut for every $|n|\geq k$. The center curve through $p$ intersects the boundary of $O$ in a point, denoted by $p^+\in \Gamma(f)$. Pick a small $\epsilon_0>0$ as in Lemma \ref{closinglemma}. Recall that $E^c$ is uniquely integrable by Theorem \ref{integrability}. By Lemma~\ref{lem1} and openness of $O$, there exists $\tau>0$ such that $B_{\tau}(p)\subset O$ and any point $z\in B_{\tau}(p)$ admits a center curve intersecting $W^s_{\epsilon_0}(W^u_{\epsilon_0}(p^+))$ in a unique point. Let $\{n_i\}_{i\in \mathbb{N}}$ be a sequence of integers such that $f^{n_i}(y)$ converges to $p$ as $i\rightarrow \infty$. There exists $N\in \mathbb{N}$ such that for any $i\geq N$, the point $f^{n_i}(y)$ is contained in $B_{\tau}(p)$. Thus, for any $i\geq N$, $f^{n_i}(y)$ admits a center curve intersecting $W^s_{\epsilon_0}(W^u_{\epsilon_0}(p^+))$ in a unique point. Taking $i> j\geq N$, we denote by $y_i, y_j\in W^s_{\epsilon_0}(W^u_{\epsilon_0}(p^+))$ two points in center curves through $f^{n_i}(y), f^{n_j}(y)$, respectively. Since the boundary of $\Gamma(f)$ is $f$-invariant, we deduce $f^{n_i-n_j}(y_j)=y_i$. Then, Lemma \ref{closinglemma}, produces a periodic point in $W^s_{\epsilon_0}(W^u_{\epsilon_0}(p))$, which is absurd. 
    
    Therefore, the limit set $\omega(y)$ must be a subset of $\Gamma(f)$ and we finish the proof.
\end{proof}

We mention that \cite[Lemma 6.3]{2020Seifert} establishes the completeness of both center-stable and center-unstable foliations for partially hyperbolic diffeomorphisms with $\Gamma(f)=M$ and no periodic points in closed 3-manifolds. Here, we generalize this result to non-accessible diffeomorphisms in higher dimensions with one-dimensional center bundles:

\begin{prop}\label{complete}
    Let $f:M\to M$ be a non-accessible partially hyperbolic diffeomorphism without periodic points and with one-dimensional center bundle $E^c$. Then, both center-stable ($\mathcal{F}^{cs}$) and center-unstable ($\mathcal{F}^{cu}$) foliations are complete.
\end{prop}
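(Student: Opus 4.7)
The plan is to proceed by contradiction; I argue for $\mathcal{F}^{cs}$, and the statement for $\mathcal{F}^{cu}$ follows by the same reasoning applied to $f^{-1}$. Suppose $\mathcal{F}^{cs}$ is not complete at some $x \in M$. Then there exist a center arc $\gamma:[0,1]\to M$ and an accessible boundary point $y=\gamma(1)$ with $\gamma([0,1))\subset W^s(W^c(x))$ and $y \notin W^s(W^c(x))$. Lemma \ref{complete-lem} immediately provides $\omega(y) \subset \Gamma(f)$, and my objective is to parlay this recurrence into $\Gamma(f)$ into a local return that triggers the Anosov Closing Lemma (Lemma \ref{closinglemma}), producing a periodic point of $f$ and contradicting the hypothesis.

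Fix $z \in \omega(y)$ and a sequence $n_i \to \infty$ with $f^{n_i}(y) \to z$, and let $\epsilon_0 > 0$ be the constant from Lemma \ref{closinglemma}. Since $E^c$ is uniquely integrable (Theorem \ref{integrability}) and one-dimensional, the local product structure of partial hyperbolicity yields, for each sufficiently large $i$, a small signed center time $t_i \to 0$ such that the point $q_i := \phi_{-t_i}(f^{n_i}(y))$ lies in the $su$-rectangle $R := W^s_{\epsilon_0}(W^u_{\epsilon_0}(z))$, where $\phi_t$ denotes the center flow of $\mathcal{F}^c$. For two large indices $i < j$, the $f$-invariance of $\mathcal{F}^c$ implies that $f^{n_j - n_i}(q_i)$ lies on $W^c(f^{n_j}(y))$ at some signed center parameter from $f^{n_j}(y)$; by exploiting the convergence $f^{n_i}(y) \to z$ together with the continuity of the center holonomy along the sequence, I expect to select a subsequence for which this parameter matches $-t_j$ up to an arbitrarily small error, placing $f^{n_j - n_i}(q_i)$ inside $R$. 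Then $f^{n_j - n_i}(R) \cap R \neq \emptyset$, so Lemma \ref{closinglemma} produces a periodic point of $f$ inside $R$, the sought contradiction.

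The principal obstacle is the matching of center offsets required by the closing lemma, which demands a return inside the codimension-one $su$-rectangle rather than within a full neighborhood of $z$. I anticipate splitting the argument along the two geometric regimes for $\Gamma(f)$ identified in Section \ref{section3}. When $\Gamma(f)$ contains a compact leaf, the compact-leaf argument inside the proof of Lemma \ref{complete-lem} already forces $y \in \Gamma(f)$ itself; consequently $f^{n_i}(y) \in \Gamma(f)$ for all $i$, the offsets $t_i$ collapse after aligning local plaques, and the closing argument applies directly. When $\Gamma(f)$ has no compact leaves, I would invoke the $I$-bundle description of complementary regions from Proposition \ref{integ-gut}, together with the uniform center-direction control afforded by Lemma \ref{lem1}, to select both $z$ and the recurrence times along a subsequence so that the center offsets produced by $f^{n_j - n_i}$ align with $t_j$, delivering the required return inside $R$ despite $y$ lying in a complementary region of $\Gamma(f)$.
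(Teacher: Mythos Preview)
Your closing-lemma scheme has a real gap at the step where you want $f^{n_j-n_i}(q_i)\in R$. Writing $q_i=\phi_{-t_i}(f^{n_i}(y))$, the image $f^{n_j-n_i}(q_i)$ lies on $W^c(f^{n_j}(y))$, but at a center parameter determined by how $f^{n_j-n_i}$ acts on the short center arc of length $|t_i|$ at $f^{n_i}(y)$. Partial hyperbolicity gives no control on this: the center is neither uniformly contracted nor expanded, so that parameter can be anything, regardless of $t_i\to 0$. ``Continuity of center holonomy'' compares nearby base points under a \emph{fixed} iterate; it says nothing about a fixed pair of points under \emph{unbounded} iterates $f^{n_j-n_i}$. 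In the compact-leaf case your assertion that ``the offsets collapse'' fails for a more structural reason: $q_i$ lies on the $su$-plaque through $z$, so $f^{n_j-n_i}(q_i)$ lies on the $su$-plaque through $f^{n_j-n_i}(z)$, which is a \emph{different} leaf since $z$ is not periodic --- hence never in $R$. Note too that your argument uses the accessible boundary property of $y$ only through Lemma~\ref{complete-lem}; but recurrence into $\Gamma(f)$ alone cannot manufacture a periodic point (think of $\Gamma(f)=M$), so something specific to $y$ must enter later.

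The paper's proof supplies exactly the missing mechanism. The accessible boundary property makes short center arcs $c_i$ at $f^{n_i}(y)$ \emph{grow} under forward iteration: one endpoint lies in $W^s(W^c(f^{n_i}(x)))$ and is attracted toward $W^c(f^n(x))$, while the endpoint $f^n(y)$ stays at least $\delta$ away, forcing $\mathrm{length}(f^n(c_i))>\delta/2$ for large $n$. This growth is combined with a three-point sandwich: pick iterates $\omega_1<\omega_3<\omega_2$ of some $z\in\omega(y)$ (ordered along the transverse center) and recurrence times $n_1,n_2,n_3$ with $f^{n_i}(y)$ near $\omega_i$. The long curve $f^{n_3-n_i}(c_i)$ based near $\omega_3$ then, for one of $i=1,2$, reaches past $\omega_i$; when $y\notin\Gamma(f)$ this already contradicts $\omega_i\in\Gamma(f)$, and when $y\in\Gamma(f)$ one arranges both endpoints of $c_i$ in $\Gamma(f)$ so that, after projecting to the transverse interval, $f^{n_i-n_3}$ contracts a closed interval with $\Gamma(f)$-endpoints into itself. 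The fixed point corresponds to an $su$-plaque in $\Gamma(f)$ meeting its own iterate, and only then does Lemma~\ref{closinglemma} fire. The growth-plus-sandwich device is what converts center drift into a genuine return in a single $su$-plaque; without it there is no route from $\omega(y)\subset\Gamma(f)$ to a periodic point.
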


\begin{proof}
	Since $f$ is not accessible, the set $\Gamma(f)$ is either a foliation or a proper lamination. We only present our proof in the case where $\Gamma(f)$ is a lamination since the foliation case follows in a simpler way. Suppose to the contrary that $\mathcal{F}^{cs}$ is not complete, implying $\mathcal{F}^{cs}(x) \neq W^{s}(W^c(x))$ for some $x\in M$. Let $y\in \mathcal{F}^{cs}(x)$ be an accessible boundary point of $W^{s}(W^c(x))$, and $\gamma : [0, 1]\rightarrow M$ be a center curve satisfying that $ \gamma(1)= y$ and $\gamma (\left[0, 1\right))\subset W^s(W^c(x))$.

    Let $\delta>0$ be given as in the proof of Lemma~\ref{complete-lem}. Replacing $\gamma$ by a shorter curve if necessary, the argument in Lemma~\ref{complete-lem} shows the existence of $N\in \mathbb{N}$ such that the length of $f^m(\gamma)$ is greater than $\frac{\delta}{2}$ for each $m\geq N$.
	
	As the manifold is compact and $E^c$ is one-dimensional, by taking a smaller $\delta$ if necessary, we can order all $su$-plaques of $\Gamma(f)$ in each coordinate cube of radius at most $\delta$. Take a point $z\in \omega(y)$. Lemma~\ref{complete-lem} implies $z\in\Gamma(f)$. Since the manifold is compact and $f$ has no periodic points, there are three iterations of $z$ in an open set with order $\omega_1<\omega_3<\omega_2$ such that $dist_c(\omega_3, \omega_i)<\frac{\delta}{8}$, $i=1,2$, where $dist_c$ is the infimum of the length of center curves joining corresponding $su$-plaques. Take two integers $n_1, n_2\in \mathbb{N}$ such that $dist_c(f^{n_i}(y), \omega_i)<\frac{\delta}{16}$, $i=1, 2$. Note that $f^{n_i}(y)$, $i=1,2$, are also accessible boundary points. 
	
		\begin{figure}[htb]	
		\centering
		\includegraphics{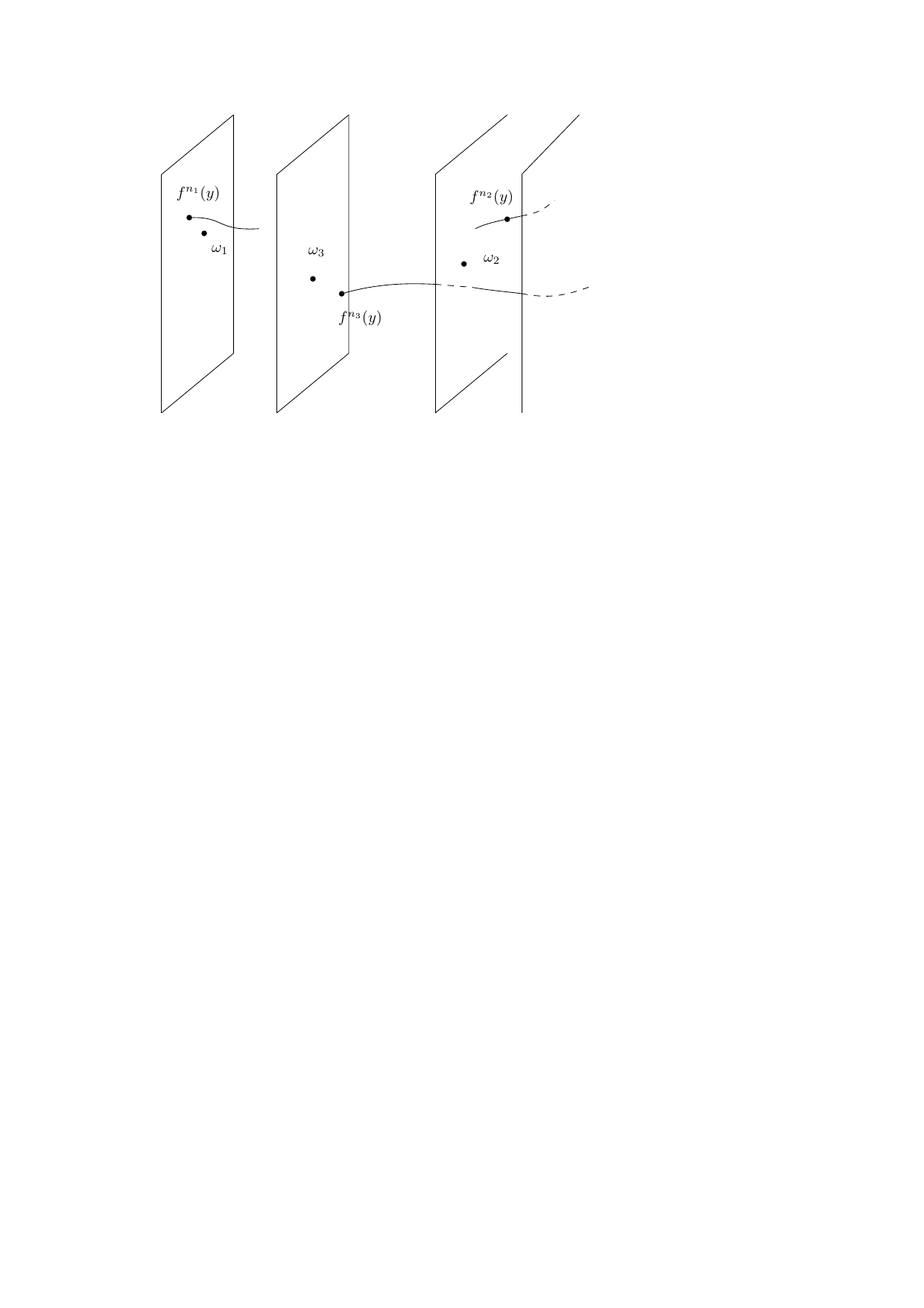}\\
		\caption{The $su$-plaques through $\omega_i$ and center curves through $f^{n_i}(y)$ in a coordinate cube}
		
	\end{figure}
	
	Suppose that $y$ is not contained in $\Gamma(f)$. Then any iteration of $y$ cannot be contained in $\Gamma(f)$ by the invariance of $\Gamma(f)$. We can take two center curves $c_1$ and $c_2$ starting at $f^{n_1}(y)$ and $f^{n_2}(y)$, respectively, as given in the definition of accessible boundary points such that they are disjoint with $\Gamma(f)$. So, there exists $N_i\in \mathbb{N}$ such that for any $n\geq N_i$, the length of $f^n(c_i)$ is greater than $\frac{\delta}{2}$ for $i=1,2$. Pick an integer $n_3\in \mathbb{N}$ such that $n_3-n_i\geq N_i$ and $dist_c(f^{n_3}(y), \omega_3)< \frac{\delta}{16}$ for $i=1,2$. Then, for either $i=1$ or $i=2$, the center curve $f^{n_3-n_i}(c_i)$ intersects the $su$-plaque through the point $\omega_i$. By the invariance of $\Gamma(f)$ and our choice of $c_i$, the center curve $f^{n_3-n_i}(c_i)$ is disjoint with $\Gamma(f)$. This is a contradiction as $\omega_i\in \omega(y)$ is contained in $\Gamma(f)$. Thus, we have $y\in\Gamma(f)$. 
	
	Now, we take a center curve $c_i$ of length less than $\frac{\delta}{16}$ satisfying the definition of accessible boundary point such that it is bounded by $f^{n_i}(y)$ and some other point in $\Gamma(f)$ for $i=1,2$. This is achievable for large $n_i$ when $c_i$ intersects the complement of $\Gamma(f)$ as the interstitial regions can be made arbitrarily thin. However, the interior of the center curve $c_i$ must intersect infinitely many $su$-plaques of $\Gamma(f)$. Otherwise, one can take $c_i$ with interior lying in the complement of $\Gamma(f)$ and deduce a contradiction by applying the argument above. For each $i=1,2$, there is an integer $N_i\in \mathbb{N}$ so that for any $n\geq N_i$, the length of $f^{n}(c_i)$ is greater than $\frac{\delta}{2}$. Take $n_3\in \mathbb{N}$ such that $n_3-n_i\geq N_i$ and $dist_c(f^{n_3}(y), \omega_3)<\frac{\delta}{16}$. Then the center curve $f^{n_3-n_i}(c_i)$ has length greater than $\frac{\delta}{2}$. Under the distance $dist_c$, the center curve $c_i$ is entirely contained in the $\frac{\delta}{2}$-neighborhood of $f^{n_3}(y)$ for $i=1,2$. Then for either $i=1$ or $i=2$, projecting along $su$-plaques, we obtain a map $f^{n_i-n_3}$ acting contractively in an interval $I$ whose two endpoints correspond to two $su$-plaques in $\Gamma(f)$. By the invariance and compactness of $\Gamma(f)$, there is an $su$-plaque in $\Gamma(f)$ containing some point $p$ and its iteration $f^{n_i-n_3}(p)$. Lemma \ref{closinglemma} implies the existence a periodic point of $f$, contradicting with the assumption. 
	
	Hence, we complete the proof.
\end{proof}

\subsection{Proof of Theorem \ref{accessible}}

From now on, we restrict consideration to three-dimensional manifolds $M$. After possibly passing to a finite cover and iterate, we assume $M$ and the bundles $E^\sigma$ ($\sigma = c,s,u$) are orientable, with orientations preserved by $f$. This assumption will not affect our conclusions since accessibility of $f$ derives from that of its finite iterates in covers.

We now start the proof of Theorem \ref{accessible}.

Suppose $f:M\rightarrow M$ is a non-accessible partially hyperbolic diffeomorphism without periodic points. If $\Gamma(f)$ contains a compact accessibility class, then $\pi_1(M)$ is virtually solvable by Theorem \ref{maptori}. When no compact accessibility classes exist, Theorem \ref{codim1} implies either that $\Gamma(f)$ is itself a minimal foliation or extends to a foliation without compact leaves.
By Proposition \ref{minimal}, $\Gamma(f)$ possesses a unique minimal set denoted $\Lambda^{su}$, possibly equal to $\Gamma(f)$. Theorem \ref{integrability} establishes unique integrability of the center bundle $E^c$. Consequently, there exist invariant foliations $\mathcal{F}^{cs}$ and $\mathcal{F}^{cu}$ tangent to $E^{cs}$ and $E^{cu}$ respectively, neither of which admits compact leaves by Theorem \ref{nocompactleaf}. Proposition \ref{complete} confirms both $\mathcal{F}^{cs}$ and $\mathcal{F}^{cu}$ are complete.

Unique integrability of $E^c$ implies the existence of a unique center foliation $\mathcal{F}^c$. Let $\widetilde{\mathcal{F}}^{\sigma}$ ($\sigma = c, cs, cu$) denote the lifted foliations in the universal cover $\widetilde{M}$, and $\widetilde{\Lambda}^{su}$ the lifted lamination.

A codimension-one foliation is $\mathbb{R}$-covered when the leaf space of its lifted foliation in $\widetilde{M}$ is homeomorphic to $\mathbb{R}$. Equivalently, a codimension-one lamination is \emph{$\mathbb{R}$-covered} if the leaf space of its lifted lamination forms a Hausdorff separable one-dimensional manifold. For proper laminations, this leaf space is not necessarily simply-connected. When a codimension-one lamination $\Lambda$ extends trivially to a foliation $\mathcal{F}$, $\Lambda$ is $\mathbb{R}$-covered precisely when $\mathcal{F}$ is by definition.

The following proposition is summarized from \cite[Section 3]{FU1}.

\begin{prop}\label{1point}
	Let $f:M^3\rightarrow M^3$ be a partially hyperbolic diffeomorphism of a closed 3-manifold with complete center-stable and center-unstable foliations $\mathcal{F}^{cs}$, $\mathcal{F}^{cu}$. Then each leaf of $\mathcal{F}^{cs}$ and $\mathcal{F}^{cu}$ is either a plane or a cylinder. The lifted foliations $\mathcal{\widetilde{F}}^{cs}$ and $\mathcal{\widetilde{F}}^{cu}$ are also complete. If $\Lambda^{su}$ is a minimal $su$-lamination without compact leaves, then each leaf of $\mathcal{\widetilde{F}}^c$ intersects every leaf of $\widetilde{\Lambda}^{su}$ in exactly one point. In particular, the lamination $\Lambda^{su}$ is $\mathbb{R}$-covered.
\end{prop}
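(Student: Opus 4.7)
The plan is to handle the four assertions in order, obtaining the $\mathbb{R}$-covered conclusion as an immediate corollary of the single-intersection statement. For the topology of $\mathcal{F}^{cs}$-leaves, completeness writes each leaf as $W^s(W^c(x)) = W^c(W^s(x))$, so topologically every leaf is a product of a stable leaf (always $\mathbb{R}$) with a center leaf (either $\mathbb{R}$ or $S^1$). Hence each leaf is a plane, cylinder, or torus; Theorem \ref{nocompactleaf} rules out compact leaves under the dynamical coherence guaranteed by the completeness hypothesis, leaving only planes and cylinders. The $\mathcal{F}^{cu}$-case is symmetric. For lifted completeness, given $\tilde y \in \widetilde{\mathcal F}^{cs}(\tilde x)$, take the center-then-stable concatenation in $M$ provided by downstairs completeness joining $\pi(\tilde x)$ to $\pi(\tilde y)$ and lift it starting at $\tilde x$; uniqueness of path lifts places $\tilde y$ in $\widetilde W^s(\widetilde W^c(\tilde x))$, proving completeness of $\widetilde{\mathcal F}^{cs}$. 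The argument for $\widetilde{\mathcal F}^{cu}$ is identical.

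For the single-intersection claim, assume $\Lambda^{su}$ is minimal without compact leaves. I first rule out two intersections. If $\widetilde W^c(\tilde x)$ met some lifted leaf $\widetilde L \subset \widetilde\Lambda^{su}$ at distinct points $\tilde y_1 \neq \tilde y_2$, then $\widetilde W^s(\tilde y_1)$ and $\widetilde W^s(\tilde y_2)$ would sit as disjoint stable leaves inside $\widetilde L \cap \widetilde{\mathcal F}^{cs}(\tilde x)$, so after projecting down we would obtain a nontrivial center segment in $M$ with endpoints in a single leaf of $\Lambda^{su} \subset \Gamma(f)$. Iterating $f$, controlling center-segment lengths via Lemma \ref{lem2}, and invoking the Anosov Closing Lemma \ref{closinglemma} then produces a periodic point, contradicting the standing hypothesis. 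For existence, fix $\tilde x$ and argue that the set of leaves of $\widetilde\Lambda^{su}$ hit by $\widetilde W^c(\tilde x)$ is both open and closed along $\widetilde W^c(\tilde x)$, using transversality of $\widetilde{\mathcal F}^c$ to $\widetilde\Lambda^{su}$, the plane/cylinder structure of lifted cs-leaves, and the $I$-bundle decomposition of complementary regions from Proposition \ref{integ-gut} to prevent escape at the accessible boundary of $\widetilde\Lambda^{su}$. Minimality of $\Lambda^{su}$ then propagates the conclusion to every leaf.

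The $\mathbb{R}$-covered conclusion is now automatic: with $\widetilde W^c(\tilde x) \cong \mathbb{R}$ meeting every leaf of $\widetilde\Lambda^{su}$ exactly once, the leaf space of $\widetilde\Lambda^{su}$ is parametrized bijectively by $\widetilde W^c(\tilde x)$ and therefore inherits a Hausdorff one-dimensional manifold structure embedded in $\mathbb{R}$. The principal obstacle is the existence half of the single-intersection argument: uniqueness follows cleanly from the Anosov closing machinery, but showing that a lifted center leaf reaches every $su$-leaf requires simultaneous use of completeness of $\widetilde{\mathcal F}^{cs}$ and $\widetilde{\mathcal F}^{cu}$, the precise geometry of complementary regions and their boundary leaves, and the minimality of $\Lambda^{su}$. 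The delicate point is the global comparison of the two transverse complete foliations in $\widetilde M$, ensuring that no lifted center leaf can ``fall off'' the lamination along an accessible boundary without violating either the $I$-bundle structure or the absence of periodic points.
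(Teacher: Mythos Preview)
The paper does not prove this proposition at all: it is quoted verbatim as a summary of \cite[Section~3]{FU1}, so there is no in-paper argument to compare against. That said, your sketch has a real gap in the uniqueness half of the single-intersection claim.

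You invoke Lemma~\ref{lem2} to ``control center-segment lengths'' under iteration, but that lemma has the hypothesis that $E^s\oplus E^c$ is \emph{non}-uniquely integrable at the point in question; its conclusion is that center separatrices eventually escape any $\delta$-ball precisely because two distinct $cs$-manifolds branch apart. Here you are assuming complete $\mathcal{F}^{cs}$ and $\mathcal{F}^{cu}$, hence dynamical coherence, so Lemma~\ref{lem2} simply does not apply and gives you no growth mechanism for the projected center segment joining $y_1$ to $y_2$. Without such a mechanism there is nothing to feed into the Anosov Closing Lemma: having two points of a center arc on the same $su$-leaf does not by itself produce a return to a single local $su$-plaque. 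The argument in \cite{FU1} is not a closing argument but a geometric one, exploiting that each lifted $cs$- and $cu$-leaf is a plane carrying a global product structure by center and strong leaves (this is where completeness is actually used), so that a double intersection of $\widetilde W^c$ with an $su$-leaf forces an incompatible configuration of these product structures.

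Your existence sketch is also too thin to evaluate: ``open and closed along $\widetilde W^c(\tilde x)$'' needs a precise meaning (the leaf space of $\widetilde{\Lambda}^{su}$ is exactly what you are trying to show is Hausdorff, so you cannot assume it has a reasonable topology in advance), and you lean on Proposition~\ref{integ-gut}, whose proof requires the no-periodic-points standing hypothesis, which is not among the hypotheses of Proposition~\ref{1point} as stated. Since the paper offers no self-contained proof here, you should consult \cite[Section~3]{FU1} directly for the completeness-based argument rather than attempting to route through the closing machinery of Section~\ref{section3}.
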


We recall the following result which does not rely on the dynamics.

\begin{prop}\cite[Proposition 5.7]{FP_hyperbolic}
	Let $M$ be an irreducible closed 3-manifold whose fundamental group is not (virtually) solvable. Assume that $\Lambda\subset M$ is a minimal lamination without compact leaves such that each closed complementary region is an $I$-bundle. Then the leaves of $\Lambda$ are uniformly Gromov hyperbolic.
\end{prop}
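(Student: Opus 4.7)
The plan is to reduce the statement to Candel's leafwise uniformization theorem for minimal laminations. Specifically, I would show that $\Lambda$ is an essential lamination whose leaves are non-compact surfaces of negative Euler characteristic, then invoke Candel's theorem to obtain a continuous leafwise Riemannian metric of constant curvature $-1$, from which uniform Gromov hyperbolicity is immediate.

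First, I would verify essentiality. Minimality together with the absence of compact leaves rules out sphere leaves and torus leaves bounding solid tori. Irreducibility of $M$ restricts to irreducibility of every closed complementary region: any embedded $2$-sphere bounds a $3$-ball in $M$, and this ball is trapped inside the region because the boundary leaves of the $I$-bundle are two-sided and cannot be pierced by the sphere's cobounding ball. The $I$-bundle product structure forces the boundary leaves to be incompressible and end-incompressible in the closed complementary region, since any compressing or end-compressing disk would project to such a disk in the non-compact base leaf, which has none for dimensional and topological reasons.

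Next, I would show that no leaf is a plane or an open annulus. Planar leaves are ruled out by extending $\Lambda$ to a foliation of $M$ by planes (using the $I$-bundle structure of the complementary regions) and invoking Theorem~\ref{rosenberg}, which forces $M\cong\mathbb{T}^3$ and contradicts the non-virtual-solvability of $\pi_1(M)$. Annular leaves are ruled out analogously: the $I$-bundle extension yields a codimension-one foliation of $M$ by open annuli, which by classical $3$-manifold foliation theory forces $M$ to be Seifert fibered over a base orbifold of Euler characteristic zero, again implying virtual solvability of $\pi_1(M)$.

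Once plane and annular leaves are excluded, every leaf of $\Lambda$ is a non-compact surface of negative Euler characteristic and hence supports a complete hyperbolic metric. Candel's uniformization theorem, applied to the minimal lamination $\Lambda$ on the compact $3$-manifold $M$, then produces a continuous leafwise Riemannian metric of constant curvature $-1$. Each leaf becomes isometric to a complete hyperbolic surface and is therefore $\delta$-hyperbolic in the Gromov sense for a universal $\delta$ independent of the leaf. I expect the main obstacle to be the cylinder case in the previous paragraph: planes are handled cleanly by Rosenberg--Gabai, but excluding annular leaves requires a genuine topological input about codimension-one foliations by annuli on closed $3$-manifolds, most naturally packaged as a Seifert-fibered conclusion that is forbidden by the non-solvability hypothesis.
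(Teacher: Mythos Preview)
The paper does not supply a proof here; the proposition is quoted from \cite{FP_hyperbolic} and used as a black box. Your plan to reduce to Candel's leafwise uniformization is the right one, and it is indeed how the cited result is established, but two steps in your outline are genuinely gapped.

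First, Candel's hypothesis is that every invariant transverse measure $\mu$ on $\Lambda$ satisfies $\chi(\mu)<0$; it is \emph{not} that each leaf individually carries a hyperbolic structure. Your sentence ``once plane and annular leaves are excluded \ldots\ Candel's uniformization theorem then produces a continuous leafwise Riemannian metric of constant curvature $-1$'' skips the bridge between leaf topology and the measure criterion. That bridge does exist for minimal laminations---if Candel fails one obtains a transverse measure with $\chi(\mu)\ge 0$, and Ghys/Plante--type arguments then force the leaves in its support to be conformally parabolic, hence planes, cylinders, or tori---but it is a genuine ingredient you must name, not a tautology. Second, your exclusion of planar leaves assumes that the presence of a planar leaf makes the $I$-bundle extension a foliation \emph{by planes}, so that Theorem~\ref{rosenberg} applies; minimality alone does not give this, since a minimal lamination can mix planar and cylindrical leaves. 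The honest argument runs contrapositively: assume Candel fails, take the obstructing invariant transverse measure, use it (via Plante) to show the leaves of the extended taut foliation have polynomial growth, and then use incompressibility of leaves to conclude that $\pi_1(M)$ itself has polynomial growth, hence is virtually nilpotent by Gromov's theorem---contradicting the hypothesis. That single measure-theoretic argument absorbs both the plane and the cylinder cases at once, so the Seifert-fibered classification you flag as the main obstacle never needs to be isolated.
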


As a direct consequence of the preceding proposition, we are able to obtain the Gromov hyperbolicity of leaves of $\Lambda^{su}$.

\begin{cor}\label{hyperbolicleaf_generalcase}
	The lamination $\Lambda^{su}$ comprises uniformly Gromov hyperbolic leaves.
\end{cor}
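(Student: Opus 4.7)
The plan is to apply the cited Fenley--Potrie proposition directly to the lamination $\Lambda^{su}$, so the task reduces to verifying its hypotheses. Three of them come cheaply from what has already been assembled. Under the case assumption active in the proof of Theorem \ref{accessible}, namely that $\Gamma(f)$ contains no compact accessibility class, Proposition \ref{minimal} guarantees that the unique minimal set $\Lambda^{su}\subset\Gamma(f)$ is a minimal lamination all of whose leaves are non-compact. Because $\Lambda^{su}$ is $f$-invariant, compact, and has no compact leaves, Proposition \ref{integ-gut} applies to $\Lambda=\Lambda^{su}$ and shows that each closed complementary region of $\Lambda^{su}$ is an $I$-bundle over a boundary leaf, fibered by center segments of uniformly bounded length. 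The non-virtual-solvability of $\pi_1(M)$ is built into the hypotheses of Theorem \ref{accessible}.

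The remaining hypothesis is irreducibility of the ambient 3-manifold $M$. This is the only step requiring input beyond assembling the results we have already developed: one must use that a closed orientable 3-manifold supporting a partially hyperbolic diffeomorphism whose fundamental group is not virtually solvable is necessarily irreducible. This is the expected main obstacle, although it is a known topological fact about partially hyperbolic 3-manifolds (and can in any case be extracted from the sphere decomposition together with the existence of a lamination whose complementary regions are $I$-bundles, since a nontrivial sphere factor would force $\pi_1(M)$ to be a nontrivial free product incompatible with the presence of such a lamination on each prime piece).

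Once irreducibility of $M$ is in hand, all four hypotheses of the Fenley--Potrie proposition are met, and its conclusion gives exactly the uniform Gromov hyperbolicity of the leaves of $\Lambda^{su}$ asserted in the corollary. Thus the proof is essentially a direct citation, with the only substantive verification being the topological one described above.
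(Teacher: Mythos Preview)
Your proposal is correct and matches the paper's approach exactly: the paper also treats the corollary as an immediate consequence of the Fenley--Potrie proposition, with no additional argument given. You are in fact more careful than the paper, which does not explicitly address the irreducibility hypothesis at all; your observation that irreducibility of $M$ follows from the partially hyperbolic structure together with $\pi_1(M)$ not being virtually solvable is the right way to fill that small gap.
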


In the universal cover, each leaf $L \in \widetilde{\Lambda}^{su}$ is identified with an open Poincaré disk. We compactify $L$ by adding an ideal circle at infinity, denoted $\partial_{\infty}L$. Intersecting $\widetilde{\mathcal{F}}^{cs}$ with $L$ yields a one-dimensional foliation $\widetilde{\Lambda}^s_L$. Similarly, intersecting $\widetilde{\mathcal{F}}^{cu}$ with $L$ defines $\widetilde{\Lambda}^u_L$. By Theorem~\ref{codim1}, $\Lambda^{su}$ trivially extends to a foliation $\mathcal{F}$ by non-compact leaves. The leaves of $\mathcal{F}$ exhibit the same geometry as those of $\Lambda$. Precisely, each leaf of $\mathcal{F}$ exhibits Gromov hyperbolicity and each lifted leaf can be compactified by an ideal circle. Let $\mathcal{A} := \bigcup_{L \in \widetilde{\Lambda}^{su}} \partial_{\infty}L$ and $\mathcal{A}' := \bigcup_{L \in \widetilde{\mathcal{F}}} \partial_{\infty}L$ denote the union of ideal circles of $\widetilde{\Lambda}^{su}$ and $\widetilde{\mathcal{F}}$, respectively. Following \cite[Section 4]{FU1}, \cite{Calegari00, Fenley02}, we endow $\mathcal{A}$ and $\mathcal{A}'$ with a topology by identifying it with unit tangent circles at arbitrary basepoints in each leaf. Consequently, $\widetilde{M} \cup \mathcal{A}'$ is homeomorphic to a solid cylinder.

As analyzed in \cite[Section 4]{FU1}, we examine the asymptotic behavior of $\widetilde{\Lambda}^s_L$ leaves for each $L \in \widetilde{\Lambda}^{su}$. Note that each deck transformation fixing some leaf of $\widetilde{\Lambda}^{su}$ is a hyperbolic M\"{o}bius transformation with exactly two fixed points in the ideal boundary of the leaf. Moreover, Theorem \ref{rosenberg} guarantees a non-trivial deck transformation fixing some leaf of $\widetilde{\Lambda}^{su}$. We will mainly focus on the leaves of $\widetilde{\Lambda}^{su}$.

Applying arguments from \cite[Section 4,5]{FU1}, we obtain these key conclusions:

\begin{prop}
	For any leaf $L\in \widetilde{\Lambda}^{su}$, each ray of any leaf of $\widetilde{\Lambda}^s_L$ accumulates in a single ideal point in $\partial_{\infty}L$. Let $S^{\infty}_L\subset \partial_{\infty}L$ be the set of ideal limit points of $\widetilde{\Lambda}^s_L$. With respect to the given topology of $\mathcal{A}$, we have two possibilities:
	\begin{itemize}
		\item If the set $S^{\infty}_L$ is not dense in $\partial_{\infty}L$ for some $L\in \widetilde{\Lambda}^{su}$, then the foliation $\Lambda^{su}$ is non-uniform. Moreover, if $\Lambda^{su}$ is a foliation, then it is the weak stable foliation of a flow conjugate to a suspension Anosov flow, and the underlying manifold is a solvmanifold.
		\item If the set $S^{\infty}_L$ is dense in $\partial_{\infty}L$ for every $L\in \widetilde{\Lambda}^{su}$, then every stable leaf $l\in \widetilde{\Lambda}^s_L$ has two distinct ideal points. Equivalently, the leaf space of $\widetilde{\Lambda}^s_L$ in $L$ is Hausdorff, and all leaves of $\widetilde{\Lambda}^s_L$ are uniform quasi-geodesics. Each leaf of $\widetilde{\Lambda}^{su}$ forms a weak quasi-geodesic fan for $\widetilde{\Lambda}^s$. Each leaf of $\Lambda^{su}$ is either a cylinder or a plane. Furthermore, the ideal points of stable leaves of $\widetilde{\Lambda}^s_L$ vary continuously.
	\end{itemize}
	
\end{prop}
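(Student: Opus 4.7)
The plan is to mirror the strategy of Sections 4--5 of \cite{FU1}, but carefully adapting to the case where $\Lambda^{su}$ may only be a proper minimal sublamination. Throughout I work in the universal cover, exploiting that each leaf $L\in \widetilde{\Lambda}^{su}$ is a Gromov-hyperbolic plane (Corollary \ref{hyperbolicleaf_generalcase}) equipped with two transverse one-dimensional foliations $\widetilde{\Lambda}^s_L$ and $\widetilde{\Lambda}^u_L$ coming from the restrictions of $\widetilde{\mathcal{F}}^{cs}$ and $\widetilde{\mathcal{F}}^{cu}$ (available thanks to Proposition \ref{1point} and completeness via Proposition \ref{complete}). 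The stabilizer $\mathrm{Stab}(L)$ in the deck group acts by M\"obius transformations on $\partial_\infty L$, and by Theorem \ref{rosenberg} together with the non-(virtually) solvable hypothesis, $\mathrm{Stab}(L)$ contains a hyperbolic element whenever $L$ is periodic; conjugating by suitable deck elements these hyperbolic fixed points are dense in $\partial_\infty L$.

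First I would establish that each ray of a leaf of $\widetilde{\Lambda}^s_L$ lands at a single point of $\partial_\infty L$. The stable leaves are properly embedded in $L$ (completeness), non-separating, and the one-dimensional foliation $\widetilde{\Lambda}^s_L$ is orientable. In a Gromov hyperbolic plane a properly embedded monotone ray must accumulate on a compact connected subset of the ideal circle; if this were a non-degenerate arc, one would apply a sequence of deck transformations in $\mathrm{Stab}(L)$ (provided by the density of hyperbolic fixed points in $\partial_\infty L$) to produce either a closed transversal or a Reeb-type configuration, contradicting the product structure that $\widetilde{\Lambda}^s_L$ inherits from the transverse foliation $\widetilde{\Lambda}^u_L$. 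This yields the single-ideal-point statement and simultaneously shows $S^\infty_L$ is $\mathrm{Stab}(L)$-invariant.

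Next I would analyze the dichotomy on $S^\infty_L$. If $S^\infty_L$ is not dense in $\partial_\infty L$ for some $L$, then its complement is a union of open arcs which are permuted by $\mathrm{Stab}(L)$. The existence of an invariant gap in $\partial_\infty L$, combined with minimality of $\Lambda^{su}$ (hence minimality on the lifted leaf space), prevents the quasi-geodesic/fan behavior that characterizes the ``uniform'' case; this is precisely the non-uniform regime of \cite{FU1}. When $\Lambda^{su}$ is a full foliation, the gap structure forces the stable and unstable leaves inside each $L$ to organize into a product foliation by geodesics that are preserved by the holonomy, and via the classification of non-uniform $\mathbb{R}$-covered foliations, $\Lambda^{su}$ arises as the weak stable foliation of a topological Anosov flow; the absence of periodic points together with the suspension structure forces this flow to be a suspension Anosov flow, and by Theorem \ref{classification_sol} together with \cite{Plante-solvmanifold-type results}, the ambient manifold is a solvmanifold.

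Finally, for the dense case, I would argue leaf-by-leaf. Density of $S^\infty_L$ means every arc of $\partial_\infty L$ contains accumulation points of stable rays, so given a leaf $l\in \widetilde{\Lambda}^s_L$, its two rays cannot land at the same ideal point: otherwise nearby stable leaves, whose ideal endpoints are pinned down by sandwiching between leaves of $\widetilde{\Lambda}^u_L$ (which are transverse to $\widetilde{\Lambda}^s_L$ and carry their own ideal data), would be forced to be asymptotic to $l$ on both sides, violating density. Hausdorffness of the leaf space of $\widetilde{\Lambda}^s_L$ then follows: non-separated leaves would share at least one ideal endpoint, contradicting the two-distinct-endpoints property. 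Uniform quasi-geodesicity is obtained via a standard compactness/cocompactness argument using that $\Lambda^{su}$ is minimal and that the endpoint map $l\mapsto (l^-,l^+)$ is $\pi_1(M)$-equivariant with closed image; uniform constants come from the Arzel\`a--Ascoli principle on the unit tangent bundle. Continuous variation of ideal endpoints follows from continuous dependence of the stable foliation on the base point, and the cylinder-or-plane trichotomy on leaves of $\Lambda^{su}$ is read off from the possible actions of $\mathrm{Stab}(L)$ (trivial, infinite cyclic hyperbolic, or larger, with the last case ruled out by Theorem \ref{rosenberg} since $M$ is not $\mathbb{T}^3$).

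The main obstacle I anticipate is the non-uniform implication in the first bullet: showing that a gap in $S^\infty_L$ really forces non-uniformity of $\Lambda^{su}$ globally, and then upgrading this to the identification of $\Lambda^{su}$ as a suspension-Anosov weak stable foliation on a solvmanifold. The delicate point is that $\Lambda^{su}$ may only be a proper sublamination, so the non-uniform dichotomy from the foliation setting of \cite{FU1} has to be transplanted via the trivial extension to the foliation $\mathcal{F}$ provided by Theorem \ref{codim1}, and one must verify that non-uniformity of $\mathcal{F}$ is detected on $\Lambda^{su}$ itself. The rest of the proposition, although lengthy, reduces to careful book-keeping on ideal boundaries once these structural results are in place.
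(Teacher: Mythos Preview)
Your proposal is essentially what the paper does: it states this proposition as a direct application of \cite[Sections 4--5]{FU1} without reproving it, and your sketch correctly identifies the transplantation of those arguments to the sublamination setting via the trivial extension $\mathcal{F}$ from Theorem \ref{codim1} as the only genuinely new wrinkle. One imprecision worth correcting: your claim that hyperbolic fixed points of $\mathrm{Stab}(L)$ are dense in $\partial_\infty L$ is false, since for a cylinder leaf $\mathrm{Stab}(L)\cong\mathbb{Z}$ has only two ideal fixed points and for a plane leaf it is trivial; the landing of stable rays at single ideal points is instead obtained (as in \cite{FU1}) from the transverse product structure with $\widetilde{\Lambda}^u_L$ and properness, which you also invoke, so the error is harmless to your overall outline.
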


By collapsing complementary regions of $\Lambda^{su}$ along center segments, we construct a new manifold $M_0 = M / \sim$ where $\sim$ identifies points on the same $I$-fiber within any complementary region. In $M_0$, $\Lambda^{su}$ becomes a minimal foliation. Crucially, since $\pi_1(M)$ is not virtually solvable, neither is $\pi_1(M_0)$. In fact, $M_0$ is homeomorphic to $M$. This collapsing preserves the completeness of $\mathcal{F}^{cs}$ and $\mathcal{F}^{cu}$ because the $I$-bundle structure in closed complementary regions is defined by center fibers. 

The first scenario cannot occur because $\pi_1(M)$ is not virtually solvable. In the second case, Theorem \ref{rosenberg} implies $\Lambda^{su}$ contains at least one cylindrical leaf. Consequently, there exists a non-trivial deck transformation $h$ fixing a leaf $F \in \widetilde{\Lambda}^{su}$. Due to the leafwise weak quasi-geodesic fan structure, all leaves of $\widetilde{\Lambda}^s_F$ accumulate at a unique ideal point $\xi_F \in \partial_{\infty}F$. This $\xi_F$ is one of two fixed points of $h$; denote the other by $\eta \in \partial_{\infty}F$. By denseness of $S^\infty_L$ and continuity of ideal points, there exists a stable leaf $l \in \widetilde{\Lambda}^s_F$ asymptotic to both $\xi_F$ and $\eta$. Let $l^*$ be the associated geodesic in $F$ connecting $\xi_F$ and $\eta$, which forms the axis of $h$. Since $l$ is quasi-geodesic, it lies entirely within some $K$-neighborhood of $l^*$ for a uniform constant $K>0$. As $h$ fixes ideal points $\xi_F$ and $\eta$, its action preserves this neighborhood, so $h^i(l)$ also lies within the $K$-neighborhood of $l^*$ for any $i\in \mathbb{Z}$. Thus, within the closure of this $K$-neighborhood, there exists an $h$-invariant leaf of $\widetilde{\Lambda}^s_F$. This implies the existence of a closed stable leaf for $f$ in $M$, which is impossible.

Hence, we complete the proof of Theorem \ref{accessible}.

\subsection{Proof of Corollary~\ref{accessible=nosutorus}}

Now, we give our proof of Corollary \ref{accessible=nosutorus} using Theorem \ref{accessible}.

\begin{proof}[Proof of Corollary \ref{accessible=nosutorus}]
    Accessibility clearly implies the absence of $su$-tori. Assume that $f: M\to M$ is a non-accessible partially hyperbolic diffeomorphism of a closed 3-manifold without periodic points. By Theorem \ref{accessible}, after passing to a finite cover, the fundamental group $\pi_1(M)$ is solvable. According to \cite{2008nil} and \cite[Theorem 2.5]{Ham17CMH}, either there exists a 2-torus tangent to $E^s \oplus E^u$ or $f$ lies in the homotopy class of an Anosov automorphism on $\mathbb{T}^3$. In the latter case, $f$ must possess periodic points, contradicting our assumption. Therefore, an $su$-torus must exist.

    Assuming the existence of $su$-tori, let $\Lambda$ denote the collection of all such tori. Theorem \ref{Hae62} establishes that $\Lambda$ forms a compact $f$-invariant lamination. No leaf in $\Lambda$ can be periodic under $f$, because otherwise an iterate of $f$ restricted to a periodic $su$-torus would be Anosov and yield periodic points. Theorem~\ref{integrability} ensures unique integrability of $E^c$, implying $f$ is dynamically coherent. Since $\pi_1(M)$ is solvable, Theorem~\ref{classification_sol} guarantees an $f$-periodic compact center curve $\gamma$. Since $f$ is $C^2$ and $\gamma$ is 2-normally hyperbolic, $\gamma$ forms a $C^2$ circle. After adjusting by the period, we may assume $\gamma$ is $f$-invariant. The restriction $f|_{\gamma}$ has no periodic points, resulting in an irrational rotation number. As $f|_{\gamma}$ is $C^2$, Denjoy's theorem ensures every orbit is dense. Note that $\Lambda \cap \gamma$ is $f|_{\gamma}$-invariant. Consequently, $\Lambda$ must have trivial complement and therefore constitutes a foliation of $M$.

    Thus, we complete the proof.
\end{proof}

\section{Complement of accessibility}\label{consequence}

In this section, we examine further dynamical properties of non-accessible partially hyperbolic diffeomorphisms without periodic points. We characterize which partially hyperbolic diffeomorphisms simultaneously exhibit non-accessibility and absence of periodic points, along with typical dynamical properties for such systems.

\subsection{Classification}

This subsection focuses on a closed three-dimensional manifold $M$ and a partially hyperbolic diffeomorphism $f:M\rightarrow M$ without periodic points and with non-accessibility. We devote this subsection to proving Theorem \ref{classification}.

Recall that Theorem~\ref{classification_sol} classifies dynamically coherent partially hyperbolic diffeomorphisms on closed 3-manifolds with solvable fundamental group. The dynamical coherence assumption in this classification is essential. Indeed, examples exist on $\mathbb{T}^3$ admitting 2-tori tangent to the center-stable or center-unstable bundle \cite{2016example}; these are neither dynamically coherent nor represented in any class listed in Theorem~\ref{classification_sol}. Moreover, for such manifolds, the existence of $cs$- or $cu$-tori is equivalent to non-dynamical coherence \cite{HP15, 2015Center}.


Now, let us present our proof.

\begin{proof}[Proof of Theorem \ref{classification}]
    Let $f:M\rightarrow M$ be a partially hyperbolic diffeomorphism of a closed 3-manifold without periodic points, and assume $f$ is non-accessible. By Theorem \ref{integrability}, the center bundle $E^c$ is uniquely integrable, which implies $f$ is dynamically coherent. As shown in Theorem \ref{accessible}, the ambient manifold $M$ has virtually solvable fundamental group. Then, up to a finite lift and iterate, $f$ is leaf conjugate to one of three diffeomorphisms listed in Theorem \ref{classification_sol}. The last two cases are desired, so we need only eliminate the first case. In this excluded scenario, the manifold $M$ is the 3-torus and $f$ is homotopic to an Anosov automorphism. But this would force periodic points, contradicting our assumption.
    Therefore, we finish the proof of Theorem \ref{classification}.
\end{proof}

\subsection{Plaque expansiveness}

Given a center foliation $\mathcal{F}^c$ for a partially hyperbolic diffeomorphism $f: M\rightarrow M$, a sequence $x_n$ ($n\in \mathbb{Z}$) is an \emph{$\epsilon$-pseudo orbit along $\mathcal{F}^c$} if $f(x_n) \in \mathcal{F}^c_{\epsilon}(x_{n+1})$ for every $n\in \mathbb{Z}$. The diffeomorphism $f$ is \emph{plaque expansive} on $\mathcal{F}^c$ (equivalently, $\mathcal{F}^c$ is plaque expansive) if there exists $\epsilon>0$ such that for any two $\epsilon$-pseudo orbits $x_n$, $y_n$ along $\mathcal{F}^c$ satisfying $d(x_n, y_n) < \epsilon$ for all $n\in \mathbb{Z}$, we have $x_0 \in \mathcal{F}^c_{loc}(y_0)$.

Whether every dynamically coherent partially hyperbolic diffeomorphism is plaque expansive remains a long-standing open problem. Partial resolutions exist for $C^1$ normally hyperbolic foliations \cite{HPS77}, $C^0$ normally hyperbolic foliations with bi-Lyapunov stable actions \cite{HHU07survey}, and quasi-isometric actions \cite{Berger13}.

A partially hyperbolic diffeomorphism $f: M\rightarrow M$ has a \emph{quasi-isometric action along a center foliation} if there exist an $f$-invariant center foliation $\mathcal{F}^c$ and constants $r, R>0$ such that $f^n(\mathcal{F}^c_{r}(x)) \subset \mathcal{F}^c_R(f^n(x))$ for all $x\in M$ and $n\in \mathbb{Z}$. See \cite{Feng24} for equivalent definitions and further analysis of such systems.

Uniqueness of invariant center foliations remains unresolved even when plaque expansiveness holds. Here, we identify a class of partially hyperbolic systems possessing both a unique invariant center foliation and plaque expansiveness.

\begin{thm}\label{plaqueexpansivity}
	Let $f:M\rightarrow M$ be a $C^1$ partially hyperbolic diffeomorphism of a closed 3-manifold without periodic points. Then, either
	\begin{itemize}
		\item $f$ is accessible; or
		\item $f$ is plaque expansive on the unique center foliation $\mathcal{F}^c$.
	\end{itemize}
	Moreover, there is a $C^1$-neighborhood $U$ of $f$ such that either
	\begin{itemize}
		\item all partially hyperbolic diffeomorphisms in $U$ are accessible; or
		\item every partially hyperbolic diffeomorphism $g\in U$ admits a uniquely integrable center bundle $E^c_g$, is plaque expansive on its unique center foliation, and is leaf conjugate to $f$. In particular, every such $g$ is dynamically coherent. 
	\end{itemize}
	
\end{thm}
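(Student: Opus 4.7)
The plan is to handle the two dichotomies in tandem by pivoting on whether $f$ is accessible, invoking Theorem \ref{classification} in the non-accessible case. If $f$ is accessible we are in the first alternative of both dichotomies: Didier's theorem \cite{Didier} on the $C^1$-stability of accessibility for diffeomorphisms with one-dimensional center bundle yields a $C^1$-neighborhood $U$ of $f$ consisting entirely of accessible diffeomorphisms. I therefore focus on the case where $f$ is not accessible.

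When $f$ is not accessible, Theorem \ref{integrability} gives the unique integrability of $E^c$ and hence a unique $f$-invariant center foliation $\mathcal{F}^c$. Theorem \ref{classification} further tells us that, after a finite lift and iteration (which preserve plaque expansiveness and accessibility), $f$ is leaf conjugate either to a skew product over a linear Anosov automorphism of $\mathbb{T}^2$ or to a discretized suspension Anosov flow. Both model classes exhibit a quasi-isometric action along the center: in the skew product case the center leaves are circles of uniformly bounded length, and in the discretized suspension case the center coincides with the flow direction with uniformly bounded displacement per iterate. Berger's criterion \cite{Berger13} for plaque expansiveness under quasi-isometric center actions applies to each model, and plaque expansiveness transfers to $f$ on $\mathcal{F}^c$ since it is preserved under leaf conjugacy and finite covers/iterates.

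For the neighborhood statement when $f$ is not accessible, I would invoke the Hirsch--Pugh--Shub structural stability theorem \cite{HPS77} for dynamically coherent, plaque expansive partially hyperbolic systems. This produces a $C^1$-neighborhood $U$ of $f$ such that every partially hyperbolic $g \in U$ is dynamically coherent with a $g$-invariant center foliation $\mathcal{F}^c_g$, is leaf conjugate to $f$ via a homeomorphism mapping $\mathcal{F}^c$ to $\mathcal{F}^c_g$, and is plaque expansive on $\mathcal{F}^c_g$.

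The main obstacle I anticipate is upgrading dynamical coherence of $g$ to unique integrability of $E^c_g$: the perturbation $g$ need not inherit the ``no periodic points'' hypothesis from $f$, so Theorem \ref{integrability} does not apply to $g$ directly. To overcome this I would exploit the rigidity inherited via the leaf conjugacy to the model. In the skew product setting the center leaves of $g$ are compact normally hyperbolic circles, where the normally hyperbolic manifold theory of \cite{HPS77} combined with the coherence and classification results of \cite{HP14, HP15} rules out branching integral curves. In the discretized suspension setting the center foliation of $g$ is the orbit foliation of a topological Anosov flow in the sense of \cite{BFP23collapsed}, and analogous arguments exclude auxiliary integral curves through any point. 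Either way the unique integral foliation tangent to $E^c_g$ is $\mathcal{F}^c_g$, completing the proof.
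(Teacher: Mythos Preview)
Your approach is essentially the same as the paper's: dichotomize on accessibility, invoke Didier \cite{Didier} in the accessible case, and in the non-accessible case combine Theorem~\ref{integrability} with Theorem~\ref{classification} to place $f$ (up to finite lift and iterate) in one of the two model classes, deduce a quasi-isometric center action, obtain plaque expansiveness, and then apply \cite[Theorem~7.4]{HPS77} for the $C^1$-neighborhood statement. The paper cites \cite{Martinchich23,Feng24} to assert directly that $f$ itself carries a quasi-isometric center action, rather than transferring plaque expansiveness from the model via leaf conjugacy; this is cleaner, since invariance of plaque expansiveness under leaf conjugacy is not entirely obvious (the conjugacy only intertwines the leaf-space dynamics, not the metric). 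Your description of the center geometry in each case is already a description of $f$'s own center, so you are in effect making the same direct argument.

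One point worth noting: you explicitly flag the step of upgrading dynamical coherence of $g\in U$ to unique integrability of $E^c_g$ and sketch model-specific arguments for it. The paper is terser here---it simply records that \cite[Theorem~7.4]{HPS77} yields a \emph{unique} plaque expansive center foliation for each $g$ and leaf conjugacy to $f$, without elaborating on why no branching integral curves can occur. Your caution is warranted, and the route you propose (exploiting that the perturbed system remains leaf conjugate to a skew product or a discretized suspension and invoking the rigidity results of \cite{HP14,HP15}) is a reasonable way to fill what the paper leaves implicit.
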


This result is motivated by a question of Shaobo Gan.

Now, let us state our proof.
\begin{proof}
Let $f: M\rightarrow M$ be a $C^1$ partially hyperbolic diffeomorphism of a closed 3-manifold without periodic points, and assume $f$ is non-accessible. Theorem \ref{integrability} establishes unique integrability of the center bundle, yielding a unique center foliation $\mathcal{F}^c$. By Theorem \ref{classification}, up to a finite cover and iterate, $f$ becomes leaf-conjugate to either a skew product over a linear Anosov automorphism of $\mathbb{T}^2$, or the time-one map of a suspension Anosov flow. 

In both cases, $f$ possesses a quasi-isometric action along the center foliation $\mathcal{F}^c$ (see \cite{Martinchich23,Feng24}). This property persists prior to taking covers or iterates. Consequently, $f$ exhibits plaque expansiveness on its unique center foliation $\mathcal{F}^c$. According to \cite[Theorem 7.4]{HPS77}, there exists a $C^1$-neighborhood $U_1$ of $f$ where every $g \in U_1$ admits a unique plaque expansive center foliation and is leaf-conjugate to $f$.

When $f$ is accessible, \cite{Didier} guarantees a $C^1$-neighborhood $U_2$ of $f$ such that every $g \in U_2$ remains accessible. Hence, we complete the proof by taking $U = U_1 \cap U_2$.
\end{proof}


We have the following corollary in higher dimensions.
\begin{cor}
	Let $f: M\rightarrow M$ be a partially hyperbolic diffeomorphism with $dim E^c=1$ and no periodic points. Then, we have either
	\begin{itemize}
		\item $f$ is accessible; or
		\item the set $\Gamma(f)$ is non-empty and $E^c$ is uniquely integrable to the center foliation $\mathcal{F}^c$. If $\mathcal{F}^c$ is a compact center foliation, then it is uniformly compact and plaque expansive.
	\end{itemize}
\end{cor}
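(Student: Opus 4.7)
The argument splits cleanly into a quick foundational part and a substantive analytical part. For the dichotomy, I would invoke Proposition~\ref{jointint}: $f$ is accessible precisely when $\Gamma(f)=\emptyset$, and whenever $\Gamma(f)\neq\emptyset$, Theorem~\ref{integrability} directly provides the unique integrability of $E^c$, producing the $f$-invariant center foliation $\mathcal{F}^c$. This disposes of the first bullet and the first half of the second bullet without new work.

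The real content is the conditional clause: if $\mathcal{F}^c$ is compact, then it is uniformly compact and plaque expansive. Since $\dim E^c=1$, each compact center leaf is a smooth circle, so I would study the length function $\ell(x):=\mathrm{length}(\mathcal{F}^c(x))$. By Epstein's analysis of $1$-dimensional compact foliations, $\ell$ is lower semicontinuous and its Epstein bad set
\[
B := \{x \in M : \ell \text{ is not locally bounded near } x\}
\]
is closed and $\mathcal{F}^c$-saturated; moreover, because $f$ permutes center leaves and carries the bad-set property forward, $B$ is $f$-invariant. Uniform compactness is equivalent to $B=\emptyset$. Assume for contradiction that $B\neq\emptyset$; the Epstein--Millett structure theorem forces every leaf $L\subset B$ to have holonomy of infinite order, represented by diffeomorphism germs on a small transversal $T\subset E^s\oplus E^u$. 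I would then combine this infinite transverse holonomy with the hyperbolic action of $Df$ on $T$: composing a large iterate $f^n$ with a suitable holonomy return map produces a self-map of a small $su$-box to which Lemma~\ref{closinglemma} (Anosov closing) applies, yielding a periodic point of $f$ and contradicting the standing hypothesis. Hence $B=\emptyset$ and $\mathcal{F}^c$ is uniformly compact.

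Once uniform compactness is in hand, there is a constant $R>0$ with $\mathrm{diam}(\mathcal{F}^c(x))\le R$ for all $x\in M$, and therefore for every $r>0$, every $x\in M$, and every $n\in\mathbb{Z}$,
\[
f^n\bigl(\mathcal{F}^c_r(x)\bigr) \subset \mathcal{F}^c_{r+R}\bigl(f^n(x)\bigr).
\]
This is exactly the quasi-isometric action along the center foliation in the sense discussed before Theorem~\ref{plaqueexpansivity}. Quasi-isometric center actions are plaque expansive, by the argument used in the proof of Theorem~\ref{plaqueexpansivity} together with the references \cite{Berger13,HPS77}, which finishes the proof.

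The main obstacle is ruling out the Epstein bad set in Step~2. Sullivan's well-known constructions produce smooth compact $1$-dimensional foliations in dimension $\ge 5$ that fail to be uniformly compact, so no purely foliation-theoretic argument suffices; one must genuinely combine the partial hyperbolicity with the no-periodic-points hypothesis. The delicate technical point is ensuring that the composition of a holonomy germ with an appropriate $f^n$ lands in the regime where the Anosov closing lemma applies, since holonomies in $B$ can be arbitrarily nonlinear in the transverse direction.
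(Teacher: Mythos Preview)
Your dichotomy step and your deduction of plaque expansiveness from uniform compactness are correct and essentially match the paper. The divergence is entirely in how uniform compactness is obtained.

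The paper does not argue via the Epstein bad set. It instead feeds Proposition~\ref{complete}---the completeness of $\mathcal{F}^{cs}$ and $\mathcal{F}^{cu}$, itself proved earlier in the paper using the no-periodic-points hypothesis---into Carrasco's result \cite[Theorem~1.2]{Carrasco15}, which yields uniform compactness of a compact center foliation directly from completeness. Plaque expansiveness then comes from \cite[Theorem~4.6]{Carrasco_thesis}. Thus the absence of periodic points enters upstream, in establishing completeness, rather than at a bad-set stage.

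Your proposed Step~2 has a genuine gap, not just the technical wrinkle you flag at the end. Holonomy along $\mathcal{F}^c$ is a germ $h:T\to T$ on a transversal through $x\in B$, while $f^n$ does not preserve $T$; there is no evident way to compose them into a self-map of a fixed $su$-box. Moreover, Lemma~\ref{closinglemma} as stated requires the base point to lie in $\Gamma(f)$, and you have not located $B$ inside $\Gamma(f)$. The infinite-holonomy conclusion from Epstein--Millett concerns how nearby center circles wind around $L\subset B$; this is information about the foliation, not about $f$-recurrence inside an $su$-plaque, and converting one into the other is precisely where completeness does the heavy lifting in Carrasco's argument. Note too that even if you produced a periodic compact center leaf $L$, the restriction $f^k|_L$ is only a $C^1$ circle map and need not have periodic points, so the contradiction is not immediate. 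Since Sullivan's examples rule out a purely foliation-theoretic proof and your dynamical mechanism does not close, the paper's route through Proposition~\ref{complete} is the one that actually works here.
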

\begin{proof}
	This is a direct consequence of Theorem \ref{integrability}, Theorem \ref{codim1}, Proposition \ref{complete}, \cite[Theorem 1.2]{Carrasco15}, and \cite[Theorem 4.6]{Carrasco_thesis}.
\end{proof}



\bibliographystyle{alpha}
\bibliography{ref}

\end{document}